\newfont{\gothic}{eufm10}
\def\Z{{\mathbb{Z}}}                   \def\R{{\RR}}
\def\RR{{\mathbb{R}}}        \def\N{{\mathbb{N}}}
        \newtheorem{theorem}{Theorem}[section]
\newtheorem{lemma}[theorem]{Lemma}
\newtheorem{proposition}[theorem]{Proposition}
\newtheorem*{propositiona}{Proposition 2.1}
\newtheorem{corollary}[theorem]{Corollary}
\newtheorem{definition1}[theorem]{Definition}
\newtheorem*{theorema}{Theorem (Existence of asymptotically defined minimisers)}
\newenvironment{definition}{\begin{definition1}\rm}{\end{definition1}}
\newtheorem{remark1}[theorem]{Remark}
\newenvironment{remark}{\begin{remark1}\rm}{\end{remark1}}
\newtheorem{example1}[theorem]{Example}
\def\barray{\begin{eqnarray*}}             \def\earray{\end{eqnarray*}}
\def\beq{\begin{equation}} \def\eeq{\end{equation}}
\DeclareMathOperator{\out}{out}
\DeclareMathOperator{\inn}{in}
\DeclareMathOperator{\f}{f}
\title{Minimisers of the Allen-Cahn equation on hyperbolic graphs}  
\author{Bla\v z Mramor } 
\affil{\small University of Freiburg, Germany. }
\date{}
\begin{document}  
\hyphenation{asympto-ti-cally}
\hyphenation{ma-ni-folds}

\newcommand{\p}{\partial}
\maketitle

\noindent

\abstract{\noindent 
We investigate minimal solutions of the Allen-Cahn equation on a Gromov-hyperbolic graph. Under some natural conditions on the graph, we show the existence of non-constant uniformly-bounded minimal solutions with prescribed asymptotic behaviours. For a phase field model on a hyperbolic graph, such solutions describe energy-minimising steady-state phase transitions that converge towards prescribed phases given by the asymptotic directions on the graph.}

\section{Introduction}

In the first subsection of the introduction we briefly state our results and in the second we give a motivation for them, together with a discussion of related literature.

\subsection{Setting and results}
We are interested in the Allen-Cahn equation on an infinite connected graph $\Gamma=(G,E)$, where $G$ is the set of vertices and $E$ the set of edges. We assume that $\Gamma$ is locally uniformly bounded and that it furthermore satisfies the following conditions, the precise statements of which can be found in section \ref{geometric_graphs}. By setting the length of edges to $1$ and denoting by $d$ the geodesic distance, $(\Gamma,d)$ is a geodesically complete unbounded metric space and we assume that it is $\delta$-hyperbolic in the sense of Gromov. The space $(\Gamma,d)$ can be then compactified by adding the {\it boundary at infinity $\p \Gamma$}, where the points in $\p \Gamma$ are equivalence classes of geodesic rays. We assume that there exists a vertex $v\in G$, such that every point is uniformly close to a geodesic ray from $v$. Finally, we assume that $\Gamma$ satisfies a particular isoperimetric condition of exponent smaller than one (given by (\ref{IP}) in section \ref{isoperimetric}), which controls the local growth of the space. 

To describe the Allen-Cahn equation, let $V:\R\to \R$ be a $C^2$ double-well potential, i.e. assume that $V$ has at least two consecutive non-degenerate absolute minima $c_0,c_1\in \R$. A typical example is $\displaystyle V(s):=(1-s^2)^2$, or a generic (in the sense of Morse theory) periodic function. Denote for every function $x:G\to\R$ the discrete Laplace operator by $$\Delta_g(x):=\sum_{d(\tilde g,g)=1}(x_{\tilde g}-x_g) \ .$$ The Allen-Cahn equation is then given by \begin{equation}\label{rr} \Delta_g(x)-V'(x_g)=0 , \text{ for all } \ g\in G .\tag{AC}\end{equation} The equation (\ref{rr}) comes with a variational structure given by the formal action functional \begin{equation}\label{fe}W(x):=\sum_{g\in G}\left(\frac{1}{4}\sum_{d(\tilde g,g)=1}(x_{\tilde g}-x_g)^2+V(x_g)\right)=\sum_{g\in G}\left(\frac{1}{4}|\nabla_g(x)|^2+V(x_g)\right),\tag{W}\end{equation} which has a well defined gradient $\nabla_g(W(x))=\Delta_g(x)-V'(x_g)$ (see also section \ref{var_prob}). As is usual in the calculus of variation, we call an entire solution $x:G\to \R$ of (\ref{rr}) a global minimiser, if it minimises the action (\ref{fe}) globally, i.e., if for every finitely supported variation $\tilde x:G\to \R$ of $x$ $$W(x+\tilde x) \geq W(x).$$ 

The goal of this paper is to construct a large class of uniformly bounded global minimisers of (\ref{rr}), which have prescribed asymptotic behaviour, tending to either $c_0$ or $c_1$ along infinite geodesic rays in $G$. 
To state our result, we note that with respect to a visual metric (explained in section \ref{compactifying}) $\p\Gamma$ and $\Gamma\cup \p\Gamma$ are compact metric spaces. We prove the so-called {\it minimal Dirichlet problem at infinity} (c.f. definition \ref{dp_def}):

\begin{theorema}
Let $D_0, D_1\subset \p \Gamma$ be such that $\overline{D_0 \cup D_1}=\p \Gamma$, $\mathring{D}_0 \cap \mathring{D}_1=\varnothing$ and $\overline{\mathring{D}}_j=D_j$ for $j\in \{0,1\}$. Then there exists a global minimiser $x$, such that for every $\varepsilon>0$ and $j\in \{0,1\}$ there exists an open set $\mathcal{O}_{j}\subset \Gamma\cup \p\Gamma$ (with respect to the topology induced by the visual metric), such that $\mathring{D}_j \subset \mathcal{O}_{j}$ and such that for every $g\in G\cap \mathcal{O}_j$, $$|x_g-c_j|\leq \varepsilon.$$
\end{theorema}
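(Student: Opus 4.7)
The plan is to obtain $x$ as a pointwise limit of Dirichlet minimisers on an exhaustion by finite balls, and then use the isoperimetric condition (\ref{IP}) together with the visual compactification to translate minimality into the prescribed asymptotics. Fix an exhaustion $B_1\subset B_2\subset\cdots$ by geodesic balls around the base vertex $v$. For each $n$, define a Dirichlet datum $\phi^n:G\setminus B_n\to\{c_0,c_1\}$ by assigning $c_j$ to every $g\notin B_n$ for which some $v$-geodesic ray through $g$ terminates in $\mathring D_j$; vertices sitting on rays converging to $D_0\cap D_1$ may be assigned either value. Because $V$ is strictly monotonic outside $[c_0,c_1]$, truncation into this interval does not increase $W$ (both the potential and the gradient term decrease, since clipping is a contraction), so the finite-dimensional coercive problem of minimising $W$ over configurations agreeing with $\phi^n$ off $B_n$ admits a solution $x^n\in[c_0,c_1]^G$ satisfying (\ref{rr}) on $B_n$.

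Compactness of $[c_0,c_1]^G$ in the product topology then yields a diagonal subsequential limit $x^{n_k}\to x^*$ pointwise. Any finitely supported variation $\tilde x$ is contained in some $B_{n_0}$ and affects only finitely many summands of $W$, so the inequality $W(x^{n_k})\leq W(x^{n_k}+\tilde x)$ reduces to a finite sum and passes to $W(x^*)\leq W(x^*+\tilde x)$. Hence $x^*$ is a global minimiser of (\ref{fe}).

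The main obstacle is verifying the asymptotic prescription of $x^*$. I would proceed by a density-of-phases comparison. For the ``bad set'' $A^j_\varepsilon:=\{g:|x^*_g-c_j|>\varepsilon\}$ and a finite $S\subset G$, the competitor equal to $c_j$ on $S$ and to $x^*$ elsewhere alters $W$ by a bulk gain of order $|A^j_\varepsilon\cap S|$ (since $V(x^*_g)-V(c_j)\geq\delta(\varepsilon)>0$ on $A^j_\varepsilon$) minus a boundary cost supported on the edges exiting $S$. The isoperimetric condition (\ref{IP}) of exponent strictly less than one is precisely what allows one to select ``large'' $S$ whose boundary cost is dominated by the bulk gain, so minimality of $x^*$ forces the density of $A^j_\varepsilon$ inside such $S$ to vanish. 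The hyperbolic geometry — the starlike assumption, the Gromov thinness of geodesic triangles, and the visual metric — then upgrades this vanishing-density statement along bundles of $v$-rays terminating in a compact $K\Subset\mathring D_j$ into the uniform pointwise bound $|x^*_g-c_j|\leq\varepsilon$ throughout an open visual neighbourhood $\mathcal O_j\supset K$; exhausting $\mathring D_j$ by such $K$ gives the statement.

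The delicate point is the selection of the comparison sets $S$: in a hyperbolic graph both metric balls and horoballs have boundary comparable to volume, so neither suffices in isolation. The sets $S$ must instead be tailored to the visual geometry of $\mathring D_j$, so that their ``boundary at infinity'' lies inside $D_{1-j}\cup(D_0\cap D_1)$ and thus away from the asymptotic phase $c_j$; only then does the sub-one isoperimetric exponent, applied to the finite-radius truncations of these sets, translate into a genuinely sub-linear control of the boundary cost and the density comparison close.
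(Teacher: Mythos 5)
Your construction of the limit $x^*$ is fine and is essentially the paper's proposition \ref{local_minimisers}: minimise on an exhaustion with two-valued boundary data determined by the cones over $\mathring D_0,\mathring D_1$, trap the minimisers in $[c_0,c_1]$ (though note the trapping comes from the comparison principle, lemma \ref{comparison}, applied against the constant minimisers, not from monotonicity of $V$ outside $[c_0,c_1]$, which is not assumed), and pass finitely supported variations to the limit. The gap is in the asymptotic control, and it is twofold. First, the claimed bulk gain is false: on the bad set $A^1_\varepsilon=\{|x^*_g-c_1|>\varepsilon\}$ you do \emph{not} have $V(x^*_g)-V(c_1)\geq\delta(\varepsilon)$, because $A^1_\varepsilon$ contains the vertices where $x^*_g$ sits near the \emph{other} well $c_0$, and there $V(x^*_g)-V(c_1)=V(x^*_g)-V(c_0)\approx 0$. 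The energetic gain of switching to $c_1$ is therefore proportional not to the volume of the wrong-phase region but only to the size of its \emph{interface} (the genuine transition set where $V\geq\beta>0$, plus the gradient terms across the jump). Second, the isoperimetric inequality (\ref{IP}) points in the opposite direction from how you use it: it is a \emph{lower} bound $C_0\#\p^{\out}B\geq(\#B)^{4D/(4D+1)}$, an expansion condition, and cannot be used to select large $S$ with boundary cost dominated by bulk gain — as you yourself observe, no such $S$ exists in an exponentially growing graph, and your closing paragraph names the difficulty without resolving it.

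The paper's argument runs the logic the other way around. Lemma \ref{ts_estimate} constructs a tailored competitor (a contraction/reflection of the low values into $[c_1-\rho,c_1]$, not the constant $c_1$) and shows, by a case analysis on the edges, that the amount of boundary of the low set $\mathcal{B}^l$ \emph{inside} a region $\mathcal{D}$ is controlled by the amount of $\mathcal{B}^l$ \emph{on the boundary of} $\mathcal{D}$. Applying (\ref{IP}) to $\mathcal{C}_{B^\varepsilon_{r_i}(\xi_0)}\cap\mathcal{B}_N^l$ then says: if the low set inside a cone is large, its interface is large, hence (by lemma \ref{ts_estimate}) the low set must be large on each of the $\sim e^{\varepsilon n_i}$ disjoint separating annuli $\tilde{\mathcal{A}}_{i_j}$ between consecutive cones (lemma \ref{separating_sets}). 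Summing over these annuli, the low set in the next cone grows like $e^{\varepsilon(D+1/4)n_{i+1}}$, which after finitely many iterations exceeds the total volume bound $\#\mathcal{B}_N\leq C_De^{\varepsilon DN}$ of proposition \ref{goc} — a contradiction that caps $\#(\mathcal{C}_{B^\varepsilon_{r_1}(\xi_0)}\cap\mathcal{B}_N^l)$ by a constant $\bar n$ independent of $N$ (corollary \ref{cor}). Finally, the pointwise statement does not follow from smallness of the low set alone: one needs lemma \ref{connected_components}, which shows every connected component of $\mathcal{B}_N^l$ must reach $\p^{\out}\mathcal{B}_N$, so a single low vertex deep inside the cone would drag a connected path of at least $\bar n$ low vertices across the cone, contradicting the cap. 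Your proposal is missing precisely these three ingredients: the interface-level (rather than volume-level) energy comparison, the outward propagation mechanism that exploits (\ref{IP}) as an expansion bound against the volume growth, and the connectedness step upgrading a cardinality bound to a pointwise one.
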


\subsection{Motivation}

A time-dependent version of the Allen-Cahn equation first appeared in the study of phase field models on $\R^2$ describing a mathematical model of phase transitions \cite{AllenCahn}. The equation (\ref{rr}) considered in this text is its steady-state version and the corresponding global minimisers represent the minimal-energy steady state solutions of the phase-field model in question. On the other hand, the Allen-Cahn equation gained its popularity in geometry via a conjecture of de Giorgi \cite{deGiorgi} that roughly asks whether the level sets of solutions of (\ref{rr}) are always minimal hyperplanes, at least in low-enough dimensional euclidean spaces. For the precise formulation of de Giorgi conjecture together with a very nice overview of the state of the art, we refer the interested reader to the works by Savin and del Pino \cite{Savin2, delpino, delpino2}. Suffice it to say, this question by de Giorgi launched an extremely fruitful and lively field of research dealing with the intricate connection between solutions of (\ref{rr}) and minimal hypersurfaces, first in the euclidean setting, and more recently also for other geometries. As explained in larger detail below, also in this setting minimal solutions play a special and important role. 

\subsubsection{Asymptotic behaviour of minimisers of the Allen-Cahn equation}

A partial answer to de Giorgi conjecture by Savin (\cite[Theorem 2.2]{Savin}) states that in $\R^d$, for $d\leq 7$, all level sets of local minimisers of (\ref{rr}) are hyperplanes. This in particular shows that global minimisers of the Allen-Cahn equation in that setting reduce to solutions of an ODE and that any global minimiser trapped between $c_0$ and $c_1$ forms a (particularly nice) heteroclinic connection from $c_0$ to $c_1$. Describing the asymptotics in the euclidean setting via the angular directions of rays emanating from the origin, the asymptotic behaviour of any such global minimiser can be thus described by a splitting of the $d$-sphere into two half spheres. 

The situation changes dramatically, when one considers the Allen-Cahn equation on a hyperbolic manifold  $\mathbb{H}^d$ of constant negative curvature $-1$. The following approach has been used by Pisante and Ponsiglione in \cite{italians}, by Birindelli and Mazzeo in \cite{birindelli}, and by Mazzeo and Saez in \cite{mazzeo}. Representing a hyperbolic manifold in the Poincar\'e ball model the asymptotic directions are given by points on the sphere that compactifies the ball, which may be again identified with the angles of the geodesic rays emanating from the origin. Similarly as in the Euclidian case, one may look for global minimisers with prescribed asymptotics by the half-spheres. By using the symmetry of $\mathbb{H}^d$ the PDE reduces to an ODE, which gives a heteroclinic connection between the two states. Observe that in the euclidean case a euclidean motion acting on a function with asymptotics defined by two half-spheres maps it to another function with asymptotics defined by two half-spheres. In the hyperbolic setting, in contrast, the group of isometries of $\mathbb{H}^d$ acts only quasi-conformally on the boundary and thus allows one to obtain minimisers with asymptotics given by any two open disjoint balls in the sphere, such that the closure of their union is the whole sphere. By the use of standard tools from the theory of elliptic PDEs (similar to those introduced in section \ref{var_prob}) one can then obtain for the hyperbolic space a similar statement to theorem A stated above. 

Our goal is to generalise this result to the case of an underlying metric space with a coarse hyperbolicity property and no inherent symmetries. In particular, the reduction to an ODE will not be possible and we shall need to use other arguments to show the existence of minimisers. 

\subsubsection{Gromov hyperbolic graphs and the isoperimetric inequality}

The coarsest concept of hyperbolicity, which generalises strictly negative sectional curvature in smooth Riemannian manifolds, is that of Gromov-hyperbolicity for geodesic metric spaces. Choosing a graph as the underlying metric space, provides a nice entry point into the analysis of nonlinear PDEs on Gromov hyperbolic spaces. Apart from providing an interesting and relevant setting by itself, its discrete nature simplifies the analysis as one does not need to deal with regularity issues. We are confident that the qualitative aspects of the theory obtained in this paper can be generalised to more general metric spaces and in particular to manifolds.

Gromov hyperbolicity is defined by the simple statement that all geodesic triangles need to be uniformly thin (see section \ref{geometric_graphs} for a brief introduction including relevant references). Typical examples of Gromov hyperbolic graphs are trees and more importantly Cayley graphs of fundamental groups of compact manifolds that allow a metric of pinched negative sectional curvature (e.g. the fundamental group of any closed orientable surface of genus larger than 1). A Gromov hyperbolic graph may be compactified by the so-called ``boundary at infinity'', which consists of equivalence classes of geodesic rays (see section \ref{geometric_graphs}). However, the boundary at infinity for a general Gromov hyperbolic graph can come also with a ``non-suitable'' topology from our point of view. This is connected to the fact that the definition of Gromov hyperbolicity allows some ``pathological'' examples, such as finite graphs or Cayley graphs of $\Z$ and $\Z\times (\Z/p\Z)$, which do not behave like manifolds of strictly negative sectional curvature. This forces us to put additional conditions on the graph if we want it to resemble a negatively curved manifold. 

The most important condition that we pose with respect to this problem comes in the form of an isoperimetric inequality (see section \ref{isoperimetric}), which estimates the number of vertices in an arbitrary subset of the graph, by the number of edges that form the boundary of this subset. For a $d$-dimensional manifold of negative sectional curvature bounded away from zero, a linear isoperimetric inequality that estimates the $(d-1)$-dimensional volume of the boundary of a set with respect to the $d$-dimensional volume of the set itself holds. In analogy, we assume in our setting a close-to linear isoperimetric inequality (see (\ref{IP})), which in particular bounds the size of balls of a particular radius from below. The isoperimetric inequality (\ref{IP}) is automatically satisfied for Cayley graphs of hyperbolic groups \cite{coulhon}, but not for general Gromov-hyperbolic graphs. An illustrative example is the following: by attaching to any edge of a Gromov-hyperbolic graph an infinite non-branching ray, this ray defines an isolated point in the boundary at infinity. On subsets of such a ray no isoperimetric inequality can hold and so an isoperimetric inequality excludes isolated points of the boundary at infinity. Apart from (\ref{IP}), some additional quite natural conditions on the graph are assumed in order to exclude pathological examples. For example, we assume the existence of a uniform bound on the number of edges at each vertex, which complementary to (\ref{IP}), gives a uniform bound on the size of a ball of a particular radius from above, thus generalising e.g. a bounded geometry condition for a negatively curved manifold. For all conditions and their precise formulations see section \ref{geometric_graphs}.

\subsubsection{Connection to the Dirichlet problem for harmonic functions}

The idea of generalising the theory for the Allen-Cahn equation on $\mathbb{H}^d$ described above to the setting of Gromov-hyperbolic spaces satisfying an isoperimetric inequality is specifically motivated by the so-called Dirichlet problem at infinity for harmonic functions. More precisely, the question is the following: given any regular-enough function $\varphi$ on the boundary at infinity, does there exists a harmonic function asymptotically converging towards the values of this $\varphi$ along geodesic rays of the corresponding equivalence classes (i.e. points at infinity)? In the context of simply connected Riemannian manifolds with sectional curvature pinched between two negative constants, this problem has been tackled among others by Choi \cite{choi}, Anderon \cite{anderson83} and Sullivan \cite{sullivan83}. In the context of Gromov hyperbolic graphs with similar conditions to the ones posed in this paper, such solutions have been obtained by Ancona \cite{ancona}, Coulhon \cite{coulhon} and Sullivan \cite{sullivan79}. Generalisations to other convex variational problems which have all the constants as solutions and to more general Gromov-hyperbolic metric spaces have been developed. For a state of the art result see the paper by Holopainen, Lang and V\"ah\"akangas \cite{holopainen_lang}. There is a difference between the isoperimetric inequality condition (\ref{IP}) from section \ref{isoperimetric} posed in this paper and the one used in the aforementioned works on harmonic functions. Namely, (\ref{IP}) is genuinely weaker than the strict linear isoperimetric inequality, which is usually posed or automatically satisfied in the literature that deals with the Dirichlet problem at infinity for the Laplace equation and its generalisations. More precisely, a strict linear isoperimetric inequality is satisfied on manifolds with negative-sectional curvature, while in other cases (\cite{ancona, coulhon, holopainen_lang}) the underlying metric space is assumed to satisfy a global Poincar\'e inequality, which is equivalent to it (see e.g. \cite{heinonen} about the equivalence). In particular, in our setting the results hold for all hyperbolic groups, while for harmonic functions this is not necessarily the case.

When generalising the Dirichlet problem at infinity for harmonic functions on a Gromov hyperbolic metric space $(\Gamma,d)$ to the Allen-Cahn equation, the following simple observation should be taken in account. Let $\varphi$ be a function from the boundary at infinity $\p \Gamma$ to $\R$, which is continuous at a point $\xi\in \p \Gamma$ and assume that the assymptotics of $x:\Gamma\to \R$ are given by $\varphi$. Then for any sequence of points $v_n\in \Gamma$, $n\in \N$, with $v_n\to \xi$ with $n\to \infty$, $\Delta_{v_n}x\to 0$ for $n\to \infty$. In particular, if $x$ is a solution of the Allen-Cahn equation (\ref{rr}), it must hold that $V'(\varphi(\xi))=0$. This shows that when one tries to solve the Dirichlet problem at infinity for the Allen-Cahn equation, one should focus on functions $\varphi:\p \Gamma\to \R$, with values only in the set of critical points of $V$ (which are for generic $V$ then locally constant). This is also one of the reasons, why the methods from the literature dealing with the Dirichlet problem at infinity for harmonic functions cannot be used. Namely, the two standard methods are either Perron's method of barriers (see \cite{choi, anderson83, holopainen_lang}), which uses the foliation of constant solutions in a critical way, or probabilistic methods (as in \cite{ancona, coulhon, sullivan79, sullivan83}), which use the non-linearity of the equation. Both methods are quite clearly not applicable to the problem discussed in this work. In view of the variational structure of the Allen-Cahn equation and the results presented above for the euclidean space and $\mathbb{H}^d$, it is thus natural to develop a variational approach for solving the Dirichlet problem at infinity for global minimisers, and more specifically, for those with asymptotics given from the set $\{c_0,c_1\}$, as stated in theorem A above.

\subsubsection{Variational approach and minimal boundaries}

A similar variational approach to the one used in this paper was developed by the author in \cite{ACAH} for solving the Allen-Cahn equation on a Cayley graph of a hyperbolic group. There, the equivalent statement to the existence theorem above is proved when (\ref{rr}) has the form $\rho\Delta_g(x)-V'(x_g)=0$ with asymptotically small $\rho$ (see \cite[Theorem 4.14]{ACAH}). In Theorem 3.7 of that paper it has been shown that for small enough $\rho$, all global minimisers converge to either $c_0$ or $c_1$, which gives additional motivation for studying the Dirichlet problem in the form presented in theorem A above. Even though a part of the construction of the proofs in this paper is similar to those in \cite{ACAH}, a couple of serious technical difficulties needed to be overcome to prove the statements in this text. The first and technically the most difficult problem was to move away from the perturbative case, that is, to drop the constant $\rho$ from the equation. The estimate contained in lemma \ref{ts_estimate} is the most important part of the solution. Apart from that, we were able to isolate the exact conditions on the metric space, which are needed for the methodology to work. Accordingly, we were able to generalise the underlying space from Cayley graphs to general hyperbolic graphs satisfying the conditions mentioned in the first section of the introduction. Note that hence no symmetry of the space is assumed. 

In \cite[section 5]{ACAH} it is furthermore shown that by taking $\rho$ to zero, the sets where the global minimisers transit from $c_0$ to $c_1$, converge to a solution of an asymptotic Plateau problem. Roughly speaking, this shows the existence of sets with prescribed asymptotics that have minimal boundaries with respect to compact variations (see \cite[section 5]{ACAH} for precise statements). The approach is inspired by the famous paper of Modica and Mortola \cite{modica}, where they construct minimal hypersurfaces via $\Gamma$-convergence from solutions of the Allen-Cahn equation in the euclidean space. Applying the methods from \cite{ACAH} to hyperbolic graphs satisfying conditions introduced in this paper, solutions to the asymptotic Plateau problem can be constructed in an analogous way. We refrain, however, from presenting the complete argument in this text, since the non-perturbative setting (i.e. no small constant $\rho$) developed in this paper is not optimal to deal with that problem directly.

We should here also mention that other existence results about minimisers for more general nonlinear variational elliptic operators on quite general Cayley graphs of groups have been discussed in \cite{llave-lattices,candel-llave} in the context of Aubry-Mather theory. However, in those works minimisers on the abelianisation of the group are discussed, which are not necessarily global minimisers of the group itself. Moreover, non-constant minimisers constructed in that way are highly irregular at the boundary at infinity.

\subsection{Outline}
The rest of this text is structured as follows. In section \ref{geometric_graphs} we pose the precise conditions on the graph. We give a brief overview of Gromov-hyperbolic graphs, including the compactification by the boundary at infinity and the visual metric. We discuss an upper bound on the growth of the graphs in connection to the Assouad dimension of the boundary at infinity, the proof of which is given in the appendix. Then we introduce the isoperimetric inequality and define geometric objects called shadows and cones, which generate the so-called ``cone topology'', equivalent to the one induced by the visual metric. Some geometric properties of the boundaries of cones are furthermore investigated.

In section \ref{var_prob} we discuss the variational structure of the Allen-Cahn equation and gather some standard results from elliptic PDE theory, such as the existence of minimal solutions trapped between $c_0$ and $c_1$ on any compact set if the boundary conditions are from $[c_0,c_1]$. By taking an exhaustion of the graph by balls with increasing radii, such solutions then converge to a global minimiser and our goal is to control its asymptotics. The first step are two technical variational lemmas, which follow from energy estimates of a minimal solution and lie at the heart of the proofs. 

In section \ref{dp}, we use the isopertimetric inequality together with the geometric properties of the boundaries of cones and the variational lemmas, to show that asymptotics of a global minimiser can be controlled. We use the intuitive fact that the set of vertices where a global minimiser passes from one state to the other (the transition set) needs to be small, as the contribution of $V$ to the action is on this set large. We show that in a neighbourhood of $D_1$ (the set where the asymptotic behaviour is given by $c_1$) this transition set extends uniformly towards the base vertex, where the distance to the base vertex depends on the largest radius of a ball at infinity which is fully contained in $D_0$.

\subsection{Acknowledgement }
I would like to thank Prof.~V.~Bangert for the helpful conversations and for his valuable comments.\\ 

\noindent
The final publication is available at Springer via \\
http://dx.doi.org/10.1007/s00526-016-1100-x


\section{Hyperbolic graphs}\label{geometric_graphs}

In this section we gather some basic facts about hyperbolic graphs and pose the necessary conditions. Let $\Gamma$ denote an infinite connected graph with the vertex set $G$ that is uniformly locally finite: \begin{equation*}\text{{\it there exists an $S\in \N$, such that every vertex $v\in G$ joins to at most $S$ edges.}}\end{equation*} By setting the length of the edges to one $\Gamma$ becomes a metric space, where the metric $d(\cdot,\cdot)$ is defined as follows: for every two vertices $g,\tilde g\in G$, $d(g,\tilde g)$ is the least length of a path in $\Gamma$ connecting $g$ and $\tilde g$. A curve $\gamma_{g,\tilde g}$ from $g$ to $\tilde g$ of length $l(\gamma_{g,\tilde g})=d(g,\tilde g)$ is called a geodesic and needs not to be unique. It follows that the space $(\Gamma, d)$ is an unbounded complete geodesic metric space, i.e. geodesics between any two points exist. 

We assume that there is a constant $\delta>0$, such that $(\Gamma, d)$ is a $\delta$-hyperbolic metric space in the sense of Gromov (as introduced in \cite{gromov1,gromov2}). This means that every geodesic triangle,(i.e.~a set of three points $g_1,g_2,g_3\in \Gamma$ together with any three geodesics $\gamma_{g_1,g_2},\gamma_{g_2,g_3},\gamma_{g_3,g_1}\subset \Gamma$ connecting them) is $\delta$-slim (i.e.~$\gamma_{g_1,g_2}$ is in the $\delta$-neighbourhood of $\gamma_{g_2,g_3}\cup \gamma_{g_3,g_1}$). There exists an extensive literature on the subject of Gromov hyperbolic spaces. For basic definitions and results, we refer the reader to \cite{harpe,notes1,notes2}. Typical examples of $\delta$-hyperbolic graphs are Cayley graphs of hyperbolic groups, such as fundamental groups of manifolds with strictly negative sectional curvature and free groups. Furthermore, hyperbolic graphs appear in network theory and mathematical biology.

\subsection{Compactifying the graph}\label{compactifying}

We shall briefly discuss the compactification of locally uniformly finite Gromov hyperbolic graphs in this section. For extensive overviews on this and related subjects we refer the reader to \cite{gromov1,kapovich,calegari}. We shall provide more specific references along the way.

A geodesic ray is given by an isometry $\gamma:[0,\infty)\to \Gamma$, where $[0,\infty)$ is equipped with the standard metric. On $\delta$-hyperbolic spaces, one may define an equivalence relation on the set of geodesic rays, by $\gamma_1\sim \gamma_2$, if there exists a $C\in \R$ such that $d(\gamma_1(t),\gamma_2(t))\leq C$ for all $t\in \R_+$. One then defines the boundary at infinity $\p \Gamma$ as the set of such equivalence classes of rays. Since $\Gamma$ is assumed to be unbounded, $\p \Gamma\neq \varnothing$.
Defining $\overline{\Gamma}:=\Gamma\cup\p\Gamma$, we may extend any ray $\gamma\in \xi\in \p\Gamma$ to $\gamma:[0,\infty]\to \overline{\Gamma}$, by defining $\gamma(\infty)=\xi$. It then holds that for every $g\in \Gamma$ and $\xi \in \p \Gamma$ there exists a geodesic ray $\gamma_{g,\xi}\subset \overline{\Gamma}$, such that $\gamma_{g,\xi}(0)=g$ and $\gamma_{g,\xi}(\infty)=\xi$. Moreover, for any two points $\xi, \mu \in \p \Gamma$ there exists an (infinite) geodesic $\gamma_{\xi,\mu}\subset \overline{\Gamma}$, connecting these two points at infinity, i.e. the ray $\gamma_{\xi,\mu}|_{[0,-\infty)}$ belongs to the equivalence class $\xi$ and $\gamma_{\xi,\mu}|_{[0,\infty)}$ to $\mu$. With these definitions, geodesic triangles in $\overline{\Gamma}$ are also $\tilde \delta$ slim for a uniform constant $\tilde \delta$. 

For the remainder of this text, let us assume that there exists a base-vertex $v \in G$ and a constant $\bar \delta>0$, such that $\Gamma$ is {\it$\bar \delta$-visual} from $v$. More precisely, let us define for every $\xi\in \p \Gamma$ the set $\xi_{v}\subset G$ by \begin{equation}\label{xiv}\xi_{v}:=\bigcup_{\gamma=\gamma_{v,\xi}} (\gamma\cap G).\end{equation}
We assume then that for every vertex $g\in G$ there exists a $\xi\in \p \Gamma$, such that \begin{equation}\label{visual} d(g,\xi_{v})\leq \bar \delta . \end{equation} 
For simplicity, we redefine $\delta$ as the maximum of $\tilde \delta, \bar \delta$ and $\delta$. 

Denote for every $g\in G$ its distance to the base vertex $v$ by $|g|:=d(v,g)$. The so-called {\it visual metric} makes $\overline{\Gamma}$ into a bounded geodesic metric space. It is defined for every $y,\tilde y\in \Gamma$ by $$d_\varepsilon(y,\tilde y):= \inf_{p_{y,\tilde y}}\int_0^{l(p_{y,\tilde y})}e^{-\varepsilon \cdot d(v, p_{y,\tilde y}(s))}ds \ ,$$ where $p_{y,\tilde y}\subset \Gamma$ is a path from $y$ to $\tilde y$. Let $\xi,\mu\in \p \Gamma$ and let $y\in \gamma_1$ where $[\gamma_1]= \xi$ and $\tilde y\in \gamma_2$ where $[\gamma_2]= \mu.$ It follows from $\delta$-hyperbolicity that there exists an $\varepsilon_0>0$, such that for all $0<\varepsilon<\varepsilon_0$, the limit of $d_\varepsilon(y,\tilde y)$ for $|y|,|\tilde y|\to \infty $ is uniformly bounded and that it depends only on the equivalence classes $\xi$ and $\mu$. This gives us for every $0<\varepsilon<\varepsilon_0$ a metric on $\overline{\Gamma}$, for which there exists a constant $\lambda>0$, such that for all $\xi,\mu\in \p \Gamma$ and every infinite geodesic $\gamma_{\xi,\mu}$\begin{equation}\label{vm_estimate}\lambda^{-1}e^{-\varepsilon d(o, \gamma_{\xi, \mu})}\leq  d_\varepsilon(\xi, \mu)\leq \lambda e^{-\varepsilon d(o, \gamma_{\xi, \mu})}. \end{equation} Let us fix such an $\varepsilon\in (0,\varepsilon_0)$ for the rest of this paper.

For every $n\in \N$ we denote metric balls in $G$ by 
$$\mathcal{B}_n:=\{g\in G \ | \ |g|\leq n\} \  \text{ and }   \ \mathcal{B}_n(g_0):=\{ g\in G \ | \ d(g_0, g)\leq n\},$$ 
and we denote balls of radius $r>0$ in $\p \Gamma$, or in $G\cup \p \Gamma$ with respect to the visual metric by \begin{equation*}\begin{aligned}
&B^\varepsilon_r(\xi_0):=\{\xi \in \p \Gamma \ | \ d_\varepsilon(\xi_0,\xi)\leq r \}\subset \p \Gamma, \\
&\mathcal{B}^\varepsilon_r(y_0):=\{y \in G \cup \p \Gamma \ | \ d_\varepsilon(y_0,y)\leq r \}\subset G\cup \p \Gamma,
\end{aligned}\end{equation*}  where $\xi_0\in \p \Gamma$ and $y_0 \in G \cup \p \Gamma$.
For $\mathcal{D}\subset G\subset \Gamma$ we denote by $\# \mathcal{D}$ the number of vertices contained in $\mathcal{D}$. Then $$\# \mathcal{B}_n\leq S^n+S^{n-1}+\dots+S+1=\frac{S^{n+1}-1}{S-1}$$ and with the visual metric, we can obviously find constants $D>0$, $C_D>0$, such that \begin{equation}\label{gob}\# \mathcal{B}_n\leq C_De^{\varepsilon nD}.\end{equation}

A natural question arises of what is the optimal constant $D>0$ for a specific graph $\Gamma$ satisfying the given conditions. This is in our setting especially important, since $D$ also comes up in the isoperimetric condition (\ref{IP}) below. A more involved estimate, stated in the proposition below, can be obtained in terms of the Assouad dimension $d_A(\p \Gamma)$ of the metric space $(\p \Gamma, d^\varepsilon)$, which is in our case finite (see \cite{BonkSchramm}) and might provide a better estimate in specific cases. A more thorough discussion about the Assouad dimension and the proof of the following statement are contained in the appendix A.

\begin{proposition}[Growth of balls]\label{goc}
For every $D>d_A(\p \Gamma)$ there exists a constant $C_D>0$, such that for every $n\in \N$, $$\# \mathcal{B}_n\leq C_De^{\varepsilon D n} .$$
\end{proposition}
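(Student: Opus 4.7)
The plan is to bound the number of vertices on each metric sphere $S_k:=\{g\in G\,|\,|g|=k\}$ by some $c'e^{\varepsilon D k}$, and then sum the resulting geometric series over $k\in\{0,1,\dots,n\}$ to obtain $\#\mathcal{B}_n\leq C_De^{\varepsilon D n}$. Since the overall bound absorbs the geometric-series prefactor, the estimate on $\#S_k$ only has to match the desired exponent.

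To estimate $\#S_k$ I would introduce a ``shadow'' map $g\mapsto\xi_g$ from $S_k$ to $\p\Gamma$ as follows. Using the $\bar\delta$-visual assumption (\ref{visual}), for every $g\in S_k$ pick a geodesic ray $\gamma_g$ from $v$ and a vertex $\tilde g\in\gamma_g\cap G$ with $d(g,\tilde g)\leq\delta$; let $\xi_g\in\p\Gamma$ be the endpoint of $\gamma_g$. Since $\tilde g$ is a vertex on $\gamma_g$, its distance from $v$ differs from $k$ by at most $\delta$. The heart of the argument is a bounded-multiplicity statement at scale $e^{-\varepsilon k}$: I claim there are constants $c>0$ and $M\in\N$, depending only on $\delta,\lambda,\varepsilon,S$, such that for every $\xi\in\p\Gamma$ at most $M$ vertices $g\in S_k$ satisfy $d_\varepsilon(\xi_g,\xi)\leq c\,e^{-\varepsilon k}$. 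To prove this, take two such $g_1,g_2$: then $d_\varepsilon(\xi_{g_1},\xi_{g_2})\leq 2c\,e^{-\varepsilon k}$, and the lower bound in (\ref{vm_estimate}) forces the geodesic $\gamma_{\xi_{g_1},\xi_{g_2}}$ to stay at graph-distance at least $k+\text{const}$ from $v$ once $c$ is chosen small enough. Standard consequences of $\delta$-slimness of the ideal triangle with vertices $v,\xi_{g_1},\xi_{g_2}$ (equivalently, a lower bound on the Gromov product $(\xi_{g_1}|\xi_{g_2})_v$) then imply that the rays $\gamma_{g_1}$ and $\gamma_{g_2}$ fellow-travel within distance $O(\delta)$ up to graph-time $k+\delta$. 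Hence $\tilde g_1$ and $\tilde g_2$ lie within a fixed distance $R=R(\delta)$ in $(\Gamma,d)$, and by triangle inequality so do $g_1$ and $g_2$. Uniform local finiteness then bounds the number of such $g_2$ by $M:=(S^{R+2}-1)/(S-1)$.

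Finally, the definition of Assouad dimension gives, for every $D>d_A(\p\Gamma)$, a constant $C'>0$ such that $(\p\Gamma,d_\varepsilon)$ can be covered by at most $C'\,e^{\varepsilon D k}$ visual balls of radius $c\,e^{-\varepsilon k}$. Combining such a cover with the bounded-multiplicity claim yields $\#S_k\leq MC'e^{\varepsilon D k}$, and summing over $k=0,\dots,n$ produces the required estimate $\#\mathcal{B}_n\leq C_D e^{\varepsilon D n}$. I expect the main technical obstacle to be the bounded-multiplicity step: one must carefully combine (\ref{vm_estimate}) with the slimness of the ideal triangle $(v,\xi_{g_1},\xi_{g_2})$ and handle the fact that the approximating vertex $\tilde g$ need not sit at graph-distance exactly $k$ from $v$, so that all implicit constants can be absorbed into $c$ and $R$ independently of $k$. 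Once that is done, the passage from a covering of $\p\Gamma$ to a bound on $\#S_k$ is a standard packing-versus-covering comparison.
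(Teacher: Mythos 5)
Your proposal is correct and follows essentially the same route as the paper: cover $\p\Gamma$ by roughly $e^{\varepsilon Dn}$ visual balls of radius comparable to $e^{-\varepsilon n}$ (via the Assouad dimension and the boundedness/doubling of $\p\Gamma$), use visuality plus hyperbolicity plus uniform local finiteness to show each such ball accounts for only boundedly many vertices of the sphere $\mathcal{S}_n$, and sum the geometric series. The only cosmetic difference is that the paper obtains the bounded-multiplicity step by invoking the ``shadows are almost round'' statement of proposition \ref{shadow} (a covering by small balls upgrades to a covering by shadows of sphere vertices), whereas you reprove the equivalent fellow-traveling fact directly from (\ref{vm_estimate}) and slimness of ideal triangles.
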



\subsection{Boundaries of sets and the isoperimetric profile}\label{isoperimetric}

Let us define for a set $\mathcal{B}\subset G$ its ``outer'' set by $\mathcal{B}^{\out}:=\{g \in G \ | d(g,\mathcal{B})\leq 1\}$ and its ``inner'' set $\mathcal{B}^{\inn}$ by $(\mathcal{B}^{\inn})^{\out}=\mathcal{B}.$ Furthermore, we define the outer and the inner boundaries of $\mathcal{B}$ by $\p^{\out} \mathcal{B}:=\mathcal{B}^{\out} \backslash \mathcal{B}$ and $\p^{\inn} \mathcal{B}:=\mathcal{B} \backslash \mathcal{B}^{\inn}$ and the ``full'' boundary by $\p^{\f} \mathcal{B}:=\p^{\out} \mathcal{B}\cup\p^{\inn} \mathcal{B}$. It easily follows that \begin{equation}\label{boundary}\p^{\out}(\mathcal{B}\cap \mathcal{D})=(\p^{\out} \mathcal{B}\cap  \mathcal{D}^{\out})\cup (\mathcal{B}^{\out}\cap \p^{\out} \mathcal{D}) \end{equation} for any two set $\mathcal{B},\mathcal{D}\subset G$.

Recall that the concept of Gromov hyperbolicity allows graphs such as the Cayley graph of $\Z$ or $\Z_n\times \Z$. However, a fundamental difference in the classes of solutions to PDEs on geometric spaces appears between spaces of polynomial vs.~exponential growth. We shall thus need to put an extra condition on our space, which will ensure the minimal required local growth properties of $G$. It comes in the form of an isoperimetric inequality.

Let $D>0$ be as in (\ref{gob}) and assume that there exists a constant $C_0\geq 1$, such that for every $B\subset G\subset \Gamma$ the following {\it isoperimetric inequality} holds:
\begin{equation}\label{IP}C_0 \# (\p^{\out} B) \geq (\# B)^{\left(\frac{4D}{4D+1}\right)} .\tag{IP}\end{equation}

\begin{remark}
As explained in the paragraph containing (\ref{gob}), Gromov hyperbolicity together with local uniform boundedness of $\Gamma$ give an upper bound on its growth. The (\ref{IP}) condition, on the other hand ensures a lower growth bound on (all subsets of) $\Gamma$. In particular, it ensures that there are no isolated (equivalence classes of) rays  in $\p \Gamma$ and is equivalent to a global Poincar\'e inequality (see \cite{heinonen}).

Note that (\ref{IP}) holds for hyperbolic groups, for which there exist constants $\alpha>0$ and $\tilde C>0$, such that for every $B\subset G\subset \Gamma$
$$\tilde C \# (\p^{\out} B)\geq \frac{\# B}{\log(\alpha(\# B))}.$$ This statement can be found e.g. in \cite{coulhon}.

\end{remark}

\subsection{Shadows and cones} 

We adapt the definition of a shadow from the setting of hyperbolic groups (due to Coornaert and Sullivan \cite{coornaert93, sullivan79}).

\begin{definition}
For every $g\in G$ we define the shadow of $g$ by $$S(g):=\{\xi\in \p \Gamma \ | \ d(g,\xi_v)\leq \delta\}\subset \p \Gamma.$$ 
\end{definition}

Clearly, for every $\xi\in \p \Gamma$ there exists a $g\in G$ with $\xi \in S(g)$ and since $\Gamma$ is visual, $S(g)\neq \varnothing$ for any $g\in G$. 

\begin{remark}
If $\Gamma$ is a Cayley graph of a hyperbolic group, the definition of a shadow in \cite{coornaert93, sullivan79} requires a much larger distance from the point $g$. There, shadows can be used together with the automaton structure and the quasi-conformal action of the group on its boundary to obtain useful concepts of measure and dimension of the boundary at infinity (see \cite{notes2,ACAH}). One step in the proofs that can be applied also to a graph like $\Gamma$ is the following proposition, which states that shadows are ``almost round''. The proof is analogous to \cite[Proposition 2.3]{ACAH}.
\end{remark}

\begin{proposition}\label{shadow}
There exists a constant $C_1>0$ such that for every $\xi\in \p \Gamma$, $r>0$ and $g_1,g_2\in \xi_{v} \subset G$, $$\begin{aligned}B_r^\varepsilon(\xi) \subset S(g_1)& \ \text{ if } \ C_1 e^{-\varepsilon|g_1|}\geq r \text{ and }\\ S(g_2)\subset B_r^\varepsilon(\xi)& \ \text{ if } \ C_1^{-1} e^{-\varepsilon|g_2|}\leq r.\end{aligned}$$
Moreover, there exists a constant $C_2>0$ such that for every $g\in G$ and $\xi\in S(g)$, $$S(g)\subset B_r^\varepsilon(\xi)\ \text{ if } \ r\geq C_2 e^{-\varepsilon |g|}.$$ 
\end{proposition}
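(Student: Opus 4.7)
The plan is to derive all three inclusions from the visual metric comparison (\ref{vm_estimate}) combined with $\delta$-slimness of the ideal triangle $\Delta(v,\xi,\mu)\subset\overline{\Gamma}$.

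For the inclusion $B_r^\varepsilon(\xi)\subset S(g_1)$, I would pick an arbitrary $\mu\in B_r^\varepsilon(\xi)$. The left-hand inequality of (\ref{vm_estimate}) forces $d(v,\gamma_{\xi,\mu})\geq -\varepsilon^{-1}\log(\lambda r)$. Since $g_1$ lies on some $\gamma_{v,\xi}$, $\delta$-slimness provides a point $y\in\gamma_{v,\mu}\cup\gamma_{\xi,\mu}$ with $d(g_1,y)\leq\delta$. The alternative $y\in\gamma_{\xi,\mu}$ implies $|g_1|\geq d(v,\gamma_{\xi,\mu})-\delta$, and this is excluded by choosing $C_1$ small enough that $C_1 e^{-\varepsilon|g_1|}\geq r$ strictly enforces $|g_1|+\delta<-\varepsilon^{-1}\log(\lambda r)$; any $C_1\leq\lambda^{-1}e^{-\varepsilon(\delta+1)}$ will do. Hence $y$ lies on $\gamma_{v,\mu}$, which in the graph is at distance at most $\tfrac12$ from a vertex of $\mu_v$; absorbing this into $\delta$ yields $d(g_1,\mu_v)\leq\delta$, and thus $\mu\in S(g_1)$.

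For the inclusions $S(g_2)\subset B_r^\varepsilon(\xi)$ and $S(g)\subset B_r^\varepsilon(\xi)$, the essential input is a uniform lower bound
\[
d(v,\gamma_{\xi,\mu})\geq |g_2|-K\delta
\]
valid whenever $g_2$ lies on a geodesic from $v$ to $\xi$ and is within $\delta$ of some geodesic from $v$ to $\mu$. Granted this estimate, the right-hand inequality of (\ref{vm_estimate}) yields $d_\varepsilon(\xi,\mu)\leq\lambda e^{K\varepsilon\delta}e^{-\varepsilon|g_2|}$, and choosing $C_1^{-1}:=\lambda e^{K\varepsilon\delta}$ (respectively $C_2:=\lambda e^{K\varepsilon\delta}$) gives both claims; the $C_2$ statement is the special case in which both $\xi$ and $\mu$ belong to $S(g)$, so both $\xi_v$ and $\mu_v$ contain a vertex within $\delta$ of $g$.

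The main obstacle is establishing this lower bound. I plan to obtain it via the Gromov product $(\xi\mid\mu)_v$, which in a $\delta$-hyperbolic space differs from $d(v,\gamma_{\xi,\mu})$ by at most a universal additive constant. The hypothesis that $\gamma_{v,\xi}$ and $\gamma_{v,\mu}$ carry points within $\delta$ of one another at level $|g_2|$ from $v$ forces $(\xi\mid\mu)_v\geq |g_2|-O(\delta)$ by a direct slim-triangle computation applied to the finite triangle $\Delta(v,g_2,g')$ (for $g'\in\mu_v$ realising the shadow condition) combined with the ideal triangle $\Delta(v,\xi,\mu)$. Some care is needed because $\gamma_{\xi,\mu}$ is ideal at both endpoints, but this is handled by the uniform slimness of ideal triangles in $\overline{\Gamma}$ already recorded and absorbed into $\delta$ in the paper.
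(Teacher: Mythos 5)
Your argument is correct and follows essentially the same route as the paper: the first inclusion is proved verbatim by combining the lower bound of (\ref{vm_estimate}) with $\delta$-slimness of the ideal triangle on $v,\xi,\mu$, and your Gromov-product lower bound $d(v,\gamma_{\xi,\mu})\geq|g_2|-O(\delta)$ is exactly the "similar proof" the paper invokes for the second inclusion. The only cosmetic difference is in the third inclusion, where the paper applies the triangle inequality for $d_\varepsilon$ through the two vertices $g_\xi,g_\eta\in\mathcal{B}_\delta(g)$ rather than passing through the Gromov product, but both reduce to the same fellow-travelling estimate and yield $C_2$ of the same form.
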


\begin{proof}
Let $\xi,\eta\in \p \Gamma$, $r>0$ and let $d_\varepsilon(\xi,\eta)<r$. Denote by $\gamma_{\xi,\eta}$ a geodesic that connects $\xi$ to $\eta$ in $\Gamma$. Then by (\ref{vm_estimate})  $$\lambda^{-1} e^{-\varepsilon d(\gamma_{\xi,\eta},v)}\leq d_\varepsilon(\xi,\eta)\leq r.$$ Let $g_1\in \xi_{v}$ with $ \lambda^{-1} e^{-\varepsilon(|g_1|+ \delta)}\geq r$. Since geodesic triangles in $\overline{\Gamma}$ are $\delta$-slim, it follows that $d(g_1,\eta_{v})\leq \delta$ and so $\eta\in S(g_1)$ which proves the first inclusion.

The second inclusion follows via a similar proof. 

Let $\xi,\eta \in S(g)$ and choose points $g_\xi \in \mathcal{B}_{ \delta}(g)\cap \xi_{v}$ and $g_\eta\in \mathcal{B}_{ \delta}(g)\cap \eta_{v}$. By the triangle inequality $$d_\varepsilon(\xi,\eta)\leq d_\varepsilon(\xi,g_\xi)+ d_\varepsilon(g_\xi, g_\eta)+d_\varepsilon(g_\eta,\eta).$$ Since $g_\xi, g_\eta\in \mathcal{B}_{ \delta}(g)$, $d(g_\xi, g_\eta)\leq 2\delta$ and it follows that $d(v, \gamma_{g_\xi, g_\eta})\leq |g|-2 \delta$ and by (\ref{vm_estimate}) that $$d_\varepsilon(\xi,\eta)\leq \lambda e^{-\varepsilon |g_\xi|}+\lambda e^{-\varepsilon (|g|-2\delta)}+ \lambda e^{-\varepsilon |g_\eta|}\leq 3\lambda e^{2\delta} e^{-\varepsilon|g|}.$$
\end{proof}

%
%
%
%
%
%


Using shadows, we can now define a topology on $G$, as in \cite{ACAH}.

\begin{definition}\label{cone}
For a set $U\subset \p \Gamma$ define the $U$-{\it cone} by $$\mathcal{C}_{U}:=\{ g \in G \ |\ S(g) \subset U\}.$$ 
Furthermore, we define the {\it cone topology} on $G$ as the topology generated by balls $\mathcal{B}_n(g_0)$ and truncated cones $C_{B^\varepsilon_r(\xi_0)}\backslash \mathcal{B}_n$. 
\end{definition}

In its essence, the cone topology defined above is similar to the cone topology on manifolds with pinched negative sectional curvature (see e.g. \cite{choi}). The following statement is essentially proved in \cite[lemma 2.6]{ACAH}, even though shadows are defined in a slightly different way there. The proof, however, is only based on the fact that shadows are almost-round as stated in proposition \ref{shadow} (c.f. \cite[proposition 2.1]{ACAH}).

\begin{lemma}\label{topology}The cone-topology from definition \ref{cone} is equivalent to the topology induced by the visual metric. 
\end{lemma}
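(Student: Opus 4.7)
The plan is to verify that each generator of the cone topology is open in the visual metric topology on $G$, and conversely that each visual ball intersected with $G$ is cone-open. The key tool is proposition \ref{shadow}: shadows $S(g)$ are almost round, being sandwiched between visual balls of radii comparable to $e^{-\varepsilon|g|}$, so cones over visual balls at infinity can be matched with visual neighborhoods of vertices far from $v$.

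The first step is to note that every singleton $\{g\}\subset G$ is open in both topologies. The cone side is trivial: $\{g\}=\mathcal{B}_0(g)$ is itself a generator. On the visual side, for any $\tilde y\in(G\cup\p\Gamma)\setminus\{g\}$, any path $p$ from $g$ to $\tilde y$ has length $l(p)\geq 1$ (an infinite ray if $\tilde y\in\p\Gamma$), and the triangle inequality $d(v,p(s))\leq|g|+s$ inside the integral gives
\[ d_\varepsilon(g,\tilde y)\;\geq\;\int_0^1 e^{-\varepsilon(|g|+s)}\,ds\;=\;\frac{(1-e^{-\varepsilon})\,e^{-\varepsilon|g|}}{\varepsilon}\;>\;0,\]
so $\{g\}$ is open in $d_\varepsilon$ as well. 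Consequently, finite sets such as $\mathcal{B}_n(g_0)$ are visual-open, and conversely any visual ball intersected with $G$ (being a union of singletons) is cone-open.

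The substantive content is the comparison for the truncated cones $\mathcal{C}_{B^\varepsilon_r(\xi_0)}\setminus\mathcal{B}_n$. Viewed as cone-neighborhoods of the boundary point $\xi_0\in\p\Gamma$, these should correspond to visual balls $\mathcal{B}^\varepsilon_R(\xi_0)\cap G$ for suitable $R=R(r,n)$. Here is where proposition \ref{shadow} would do the work: given $\xi\in\p\Gamma$ and $r>0$, for every $g\in\xi_v$ with $|g|$ large enough so that $C_1 e^{-\varepsilon|g|}\leq r$ one has $B^\varepsilon_r(\xi)\subset S(g)$, and for $|g|$ large enough so that $C_2 e^{-\varepsilon|g|}\leq r$ one has $S(g)\subset B^\varepsilon_r(\xi)$; the second inclusion shows that for large $n$ the truncated cone $\mathcal{C}_{B^\varepsilon_r(\xi)}\setminus\mathcal{B}_n$ is contained in $\mathcal{B}^\varepsilon_r(\xi)\cap G$ (after enlarging $r$ by a controlled factor), while the first inclusion shows that any visual-ball neighborhood of $\xi$ in $G$ contains a truncated cone of the above form.

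The main obstacle is purely organizational: keeping track of the constants $C_1,C_2$ from proposition \ref{shadow} and the radii $r,R$ so that the two families of neighborhoods mutually refine each other at the correct exponential scale in $|g|$. Once that bookkeeping is done, every basic cone set is a union of visual-open sets and every visual ball a union of cone-open sets, giving the equivalence of topologies.
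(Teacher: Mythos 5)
Your plan is correct and is essentially the paper's own route: the paper gives no proof of lemma \ref{topology}, deferring to \cite[lemma 2.6]{ACAH} and remarking that the argument rests only on the almost-roundness of shadows (proposition \ref{shadow}), which is exactly the tool you deploy, together with the discreteness of $G$ in the visual metric. One inequality is reversed in your restatement of proposition \ref{shadow}: the hypothesis for $B_r^\varepsilon(\xi)\subset S(g)$ is $C_1e^{-\varepsilon|g|}\geq r$ (vertices close to $v$ have large shadows), not $C_1e^{-\varepsilon|g|}\leq r$; this is harmless because both containments you need at a boundary point $\xi_0$ --- truncated cone inside a visual ball, and a small visual ball inside a truncated cone --- actually follow from the ``moreover'' inclusion $S(g)\subset B^\varepsilon_{C_2e^{-\varepsilon|g|}}(\eta)$ for $\eta\in S(g)$, combined with the upper bound $d_\varepsilon(g,\eta)\leq C e^{-\varepsilon|g|}$ for $\eta\in S(g)$ and the lower bound $d_\varepsilon(g,\xi_0)\geq \varepsilon^{-1}\bigl(1-e^{-\varepsilon}\bigr)e^{-\varepsilon|g|}$ that you already derived when showing singletons are visually open.
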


For the remainder of this section, we shall focus on $U$-cones, where $U$ is a metric ball at infinity.

In definition \ref{cone} we associate to every $g\in \mathcal{C}_{B^\varepsilon_r(\xi_0)}$ the set of points $S(g)\subset B^\varepsilon_r(\xi_0)$. On the other hand, we shall find it useful to associate to every $\xi \in B^\varepsilon_r(\xi_0)$ the set  $$\mathcal{U}_\xi:=\{g \in G \ | \ d(g,\xi_{v})\leq \delta\}.$$ 

\begin{lemma}\label{estimate_cone}
Let $C_1$ be as in proposition \ref{shadow}. If $g\in \mathcal{U}_\xi$ for some $\xi\in B^\varepsilon_r(\xi_0)$ with $d_\varepsilon(\xi, \xi_0)< r-C_1 e^{-\varepsilon|g|}$, then $g\in \mathcal{C}_{B^\varepsilon_r(\xi_0)}$.
\end{lemma}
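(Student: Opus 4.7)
The plan is to show that an arbitrary point $\eta\in S(g)$ necessarily lies in $B_r^\varepsilon(\xi_0)$, from which $S(g)\subset B_r^\varepsilon(\xi_0)$ and hence $g\in \mathcal{C}_{B_r^\varepsilon(\xi_0)}$ follow by definition. The natural tool is the triangle inequality in the visual metric, inserting the intermediate point $\xi$:
\[
d_\varepsilon(\eta,\xi_0)\;\leq\; d_\varepsilon(\eta,\xi)+d_\varepsilon(\xi,\xi_0).
\]
The second summand is already controlled by hypothesis; the task reduces to bounding the first summand by $C_1 e^{-\varepsilon|g|}$.

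For this first summand I would exploit the ``almost-round'' nature of the shadow $S(g)$. The key observation is that the condition $g\in \mathcal{U}_\xi$, i.e.\ $d(g,\xi_v)\le\delta$, is exactly the definition of $\xi\in S(g)$. Thus both $\xi$ and $\eta$ lie in the same shadow $S(g)$, so I can apply the third clause of Proposition \ref{shadow} centred at $\xi$ (with the constant $C_1$ replaced/absorbed appropriately by the $C_2$ from that proposition, which the statement identifies with $C_1$). This gives
\[
d_\varepsilon(\eta,\xi)\;\leq\; C_1 e^{-\varepsilon|g|}
\]
since $\eta\in S(g)\subset B_{C_1 e^{-\varepsilon|g|}}^\varepsilon(\xi)$.

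Plugging both bounds back into the triangle inequality yields
\[
d_\varepsilon(\eta,\xi_0)\;<\; C_1 e^{-\varepsilon|g|}+\bigl(r-C_1 e^{-\varepsilon|g|}\bigr)\;=\;r,
\]
so $\eta\in B_r^\varepsilon(\xi_0)$. Since $\eta\in S(g)$ was arbitrary, $S(g)\subset B_r^\varepsilon(\xi_0)$, which by Definition \ref{cone} is precisely $g\in\mathcal{C}_{B_r^\varepsilon(\xi_0)}$.

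There is no real obstacle here beyond bookkeeping: the whole argument is essentially the triangle inequality fed with the quantitative roundness of shadows that was already packaged in Proposition \ref{shadow}. The only mildly delicate point is noticing that $g\in\mathcal{U}_\xi$ puts $\xi$ itself into $S(g)$, which is what allows centring the almost-round estimate at $\xi$ (rather than at an arbitrary point of $\xi_v$), so that a single uniform constant $C_1$ suffices and the buffer $r-C_1 e^{-\varepsilon|g|}$ in the hypothesis is exactly what is consumed by the detour through $\xi$.
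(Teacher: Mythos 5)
Your proof is correct and follows essentially the same route as the paper: observe that $g\in\mathcal{U}_\xi$ means $\xi\in S(g)$, apply the almost-roundness clause of Proposition \ref{shadow} centred at $\xi$, and conclude $S(g)\subset B^\varepsilon_{C_1e^{-\varepsilon|g|}}(\xi)\subset B^\varepsilon_r(\xi_0)$. Your remark about the constant is also apt — the paper itself silently uses $C_1$ where the third clause of Proposition \ref{shadow} is stated with $C_2$, so absorbing one constant into the other is exactly the right bookkeeping.
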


\begin{proof}
By proposition \ref{shadow} $S(g)\subset B_{C_1 e^{-\varepsilon |g|}}^\varepsilon(\xi)$ for every $\xi \in S(g)$. So if $g\in \mathcal{U}_\xi$ for a $\xi$ with $d_\varepsilon (\xi, \xi_0)< r- C_1 e^{-\varepsilon|g|}$, then $$S(g)\subset B_{C_1 e^{-\varepsilon |g|}}^\varepsilon(\xi) \subset B^\varepsilon_{r}(\xi_0)$$ and so $g\in \mathcal{C}_{B^\varepsilon_r(\xi_0)}$. 
\end{proof}


To conclude this section we prove a somewhat technical statement about estimating the boundaries of cones with annuli at infinity. Define an annulus by $$ A_r^t(\xi_0):=\overline{(B_{r+t}(\xi_0)\backslash B_{r-t}(\xi_0))}\subset \p \Gamma$$ and denote for a set $\mathcal{D}\subset G$, the $n$-times iterated inner set $(\dots((\mathcal{D})^{\inn})^{\inn}\dots)^{\inn}$ by $\mathcal{D}^{(n\inn)}$ and similarly the $n$-times iterated outer set $\mathcal{D}^{(n\out)}$.  

\begin{lemma}\label{separating_sets}
Define for every $n\in \N$ the number $t_n:= \max\{4\varepsilon^{-1}e^{\varepsilon4},C_2\}e^{-\varepsilon n}$ and for every $r\geq 0$ the set $$\mathcal{A}_{r,t_n}:=\{g\in \mathcal{U}_\xi \backslash \mathcal{B}_n\ | \ \xi\in A_{r+2t_n}^{t_n}(\xi_0)\}.$$ Then $$\p^{\f} (\mathcal{C}_{B^\varepsilon_{r+2t_n}(\xi_0)})^{(3\out)}\subset \mathcal{A}_{r,t_n}\cup \mathcal{B}_n\ \text{ and }\ \p^{\out} \mathcal{C}_{B^\varepsilon_{r+2t_n}(\xi_0)}\subset \mathcal{A}_{r,t_n}\cup \mathcal{B}_n.$$ Moreover, $(\mathcal{A}_{r,t_n}\cap \mathcal{A}_{r+4t_n,t_n})\backslash \mathcal{B}_n=\varnothing$.
\end{lemma}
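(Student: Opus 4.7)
The plan is to treat the three assertions separately, beginning with the disjointness statement, which is the cleanest. Suppose $g\in(\mathcal{A}_{r,t_n}\cap\mathcal{A}_{r+4t_n,t_n})\setminus\mathcal{B}_n$. Then $|g|\geq n+1$ and $S(g)$ contains both some $\xi\in A^{t_n}_{r+2t_n}(\xi_0)$ and some $\eta\in A^{t_n}_{r+6t_n}(\xi_0)$. The two annuli sit in disjoint shells around $\xi_0$ separated by a gap of $2t_n$, forcing $d_\varepsilon(\xi,\eta)\geq 2t_n$; but the diameter estimate of proposition~\ref{shadow} applied at $g$ yields $d_\varepsilon(\xi,\eta)\leq C_2 e^{-\varepsilon|g|}\leq C_2 e^{-\varepsilon(n+1)}<t_n$, which contradicts the lower bound.

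The outer-boundary inclusion is the central case. Fix $g\in\partial^{\out}\mathcal{C}_{B^\varepsilon_{r+2t_n}(\xi_0)}\setminus\mathcal{B}_n$ and a neighbour $g'\sim g$ lying in $\mathcal{C}_{B^\varepsilon_{r+2t_n}(\xi_0)}$; I aim to exhibit a single $\xi\in S(g)$ with $d_\varepsilon(\xi,\xi_0)\in[r+t_n,r+3t_n]$. For the lower bound, the contrapositive of lemma~\ref{estimate_cone} at $g\notin\mathcal{C}_{B^\varepsilon_{r+2t_n}(\xi_0)}$ forces
\[
d_\varepsilon(\xi,\xi_0)\;\geq\;r+2t_n-C_1 e^{-\varepsilon|g|}\;\geq\;r+t_n
\]
for every $\xi\in S(g)$, the last step being built into the prefactor of $t_n$. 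For the upper bound I use the visual condition to pick $\xi\in S(g)$ and $\xi'\in S(g')$, so that $d_\varepsilon(\xi',\xi_0)\leq r+2t_n$ by $\xi'\in S(g')\subset B^\varepsilon_{r+2t_n}(\xi_0)$. The key auxiliary estimate is that the corresponding geodesic rays from $v$, which pass within $\delta$ of the adjacent vertices $g,g'$, have Gromov product $(\xi|\xi')_v\geq |g|-O(\delta)$; combined with (\ref{vm_estimate}) this gives $d_\varepsilon(\xi,\xi')\leq C e^{-\varepsilon|g|}$ for a constant $C=C(\delta,\lambda)$, and a second appeal to the definition of $t_n$ turns this into $d_\varepsilon(\xi,\xi_0)\leq r+3t_n$ via the triangle inequality. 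Hence $\xi\in A^{t_n}_{r+2t_n}(\xi_0)$ and $g\in\mathcal{A}_{r,t_n}$.

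The first inclusion follows the same template. A vertex $g\in\partial^{\f}(\mathcal{C}_{B^\varepsilon_{r+2t_n}(\xi_0)})^{(3\out)}\setminus\mathcal{B}_n$ has graph distance $k\in\{3,4\}$ from $\mathcal{C}_{B^\varepsilon_{r+2t_n}(\xi_0)}$, so I pick $g''$ realising this distance and rerun the Gromov-product computation with $g,g''$ in place of $g,g'$. The only change is that the Gromov product drops to $(\xi|\xi'')_v\geq|g|-k-O(\delta)$, weakening the upper bound by a factor $e^{\varepsilon k}\leq e^{4\varepsilon}$; together with the $C_1$-term from the lower bound this is exactly what the prefactor $4\varepsilon^{-1}e^{4\varepsilon}$ in $t_n$ is designed to absorb, and the argument concludes as before.

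The main obstacle is the Gromov-product estimate just invoked, namely that two geodesic rays from $v$ passing within $\delta$ of nearby vertices must diverge only past depth $|g|-O(\delta)$. It is a standard slim-triangle computation in $\overline{\Gamma}$, but the constants $C,C_1,C_2,\lambda,\delta$ must be tracked carefully to confirm that they are genuinely captured by the specific prefactor $4\varepsilon^{-1}e^{4\varepsilon}$ entering the definition of $t_n$; this verification is what the explicit form of $t_n$ encodes.
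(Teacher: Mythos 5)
Your disjointness argument is correct and is essentially the paper's: two points of $S(g)$ are at $d_\varepsilon$-distance at most $C_2e^{-\varepsilon|g|}\le t_n<2t_n$ by proposition \ref{shadow}, while the two annuli are separated by a gap of width $2t_n$. The architecture of the inclusion arguments is also the right one (a witness $\xi\in S(g)$ outside $B^\varepsilon_{r+2t_n}(\xi_0)$, a nearby vertex $\tilde g$ in the cone, and a two-sided estimate placing $\xi$ in the annulus). However, the central quantitative step is not actually proved, and the mechanism you invoke to close it does not match the definition of $t_n$. The prefactor $\max\{4\varepsilon^{-1}e^{4\varepsilon},C_2\}$ contains exactly two ingredients: $C_2$, calibrated to the shadow-diameter bound used in the disjointness step, and $4\varepsilon^{-1}e^{4\varepsilon}$, calibrated to the bound $d_\varepsilon(\xi,g)+d_\varepsilon(g,\tilde g)+d_\varepsilon(\tilde g,\eta)\le 4\varepsilon^{-1}e^{-\varepsilon(n-4)}$, which the paper obtains by estimating the visual metric directly as a weighted path integral through the vertices $g$ and $\tilde g$ (a path of length at most $4$ at depth at least $n-4$, plus the two ray tails, each contributing on the order of $\varepsilon^{-1}e^{-\varepsilon(n-4)}$). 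That computation is what the explicit form of $t_n$ ``encodes''. Your route via Gromov products and (\ref{vm_estimate}) produces instead a constant of the form $\lambda e^{O(\varepsilon\delta)}$, and there is no reason this is bounded by $\max\{4\varepsilon^{-1}e^{4\varepsilon},C_2\}$; asserting that the prefactor ``is designed to absorb'' it and deferring the verification leaves the heart of the lemma unproved.

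The same problem occurs, more mildly, in your lower bound. Lemma \ref{estimate_cone} does give $d_\varepsilon(\xi,\xi_0)\ge r+2t_n-C_1e^{-\varepsilon|g|}$ for all $\xi\in S(g)$, but the claim that $C_1e^{-\varepsilon|g|}\le t_n$ is again not ``built into the prefactor of $t_n$'': $C_1$ appears nowhere in its definition. This detour is also unnecessary: since $g\notin\mathcal{C}_{B^\varepsilon_{r+2t_n}(\xi_0)}$, by the definition of the cone there is some $\xi\in S(g)$ with $d_\varepsilon(\xi,\xi_0)>r+2t_n>r+t_n$; choosing that $\xi$ as your witness (as the paper does) removes $C_1$ from the argument entirely and leaves only the upper bound to establish --- which should then be done by the path-integral estimate above rather than by an untracked slim-triangle constant.
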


\begin{proof}
Let $g\in (\p^{\f} (\mathcal{C}_{B^\varepsilon_{r+2t_n}(\xi_0)})^{(3\out)}\backslash \mathcal{B}_n) \cup (\p^{\out} \mathcal{C}_{B^\varepsilon_{r+2t_n}(\xi_0)}\backslash \mathcal{B}_n)$. Then there exist a $\xi\in  \Gamma\backslash B_{r+2t_n}^{\varepsilon}(\xi_0)$ with $g\in \mathcal{U}_\xi\backslash \mathcal{B}_{n}$ and a vertex $\tilde g\in \mathcal{C}_{B^\varepsilon_{r+2t_n}(\xi_0)}\backslash \mathcal{B}_{n-4}$ such that $d(g,\tilde g)\leq4$.
It holds that $S(\tilde g) \subset B_{r+2t_n}^{\varepsilon}(\xi_0)$ and it follows by the definition of the visual metric and a short calculation for every $\eta\in S(\tilde g)$ that $$d_\varepsilon(\xi,\eta)\leq d_\varepsilon(\xi,g)+d_\varepsilon(g,\tilde g)+d_\varepsilon(\tilde g,\eta)\leq 4\varepsilon^{-1}e^{-\varepsilon(n-4)}\leq t_n,$$ so $\xi\in A_{r+2t_n}^{t_n}$.

To show the disjointness, let $g\in \mathcal{U}_\xi\backslash \mathcal{B}_n$ for a ray $\xi \in A_{r+2t_n}^{t_n}$. Then it holds for all rays $\eta$ with $g\in \mathcal{U}_\eta\backslash \mathcal{B}_n$ that $\eta\in S(g)$ and by proposition \ref{shadow} it follows that $d_\varepsilon(\xi,\eta)\leq C_2 e^{-\varepsilon n}< t_n$. Because of the strict inequality $\eta\in \mathring{A}_{r+2t_n}^{2t_n}$ and so $g\notin \mathcal{A}_{r+4t_n,t_n}$.
\end{proof}

\section{The variational problem}\label{var_prob}

As explained in the introduction, we are interested in solutions $x:G\to\R$ of the discrete Allen-Cahn equation (\ref{rr}), given by $$ \Delta_g(x)-V'(x_g)=0 , \text{ for all } \ g\in G ,$$ where $V:\R\to \R$ is a $C^2$ double-well potential. More precisely, we assume that $V$ is a $C^2$ function with two non-degenerate consecutive minima $c_0,c_1\in \R$. We assume without loss of generality, by adding a constant to $V$ if necessary, that $V(c_0)=V(c_1)=0$.

\subsection{Minimal solutions}

For any finite (compact) set $\mathcal{B} \subset G$ and function $x:G\to \R$, the ``truncated action functional'' is defined similarly as in (\ref{fe}) by \begin{equation}\label{vp}W_\mathcal{B}(x):=\sum_{g\in \mathcal{B}}\left(\frac{1}{4}|\nabla_g(x)|^2+V(x_g)\right).\end{equation} The function $W_\mathcal{B}(x)$ is then a function of variables $x_g$ where $g\in \mathcal{B}^{\out}$ and $x$ solves (\ref{rr}) for all $g\in \mathcal{B}^{\inn}$, if for every perturbation $y$ supported on $\mathcal{B}^{\inn}$, $$\left.\frac{d}{ds}\right|_{s=0} W_B(x+sy)=0.$$ 
This motivates the following definition.

\begin{definition}\label{def_gm}For any set $\mathcal{B}\subset G$, a function $x:G \to \R$ is called {\it a minimiser on $\mathcal{B}$}, if $$ W_{\mathcal{B}^{\out}}(x+y)-W_{\mathcal{B}^{\out}}(x)\geq 0$$ for all $y:G \to \R$ with compact support in $\mathcal{B}$. 

This definition makes sense also for infinite sets $\mathcal{B}$, because the support of $y$ is compact, since one may evaluate the difference above by truncating the actions to a bounded set containing the support of $y$ in its interior.
The function $x$ is called a {\it global minimiser}, if $x$ is a minimiser on $G$.
\end{definition}

Observe that with this definition, a minimiser $x$ on a compact set $\mathcal{B}$ is a solution to the Dirichlet problem given by (\ref{rr}) on $\mathcal{B}$, with given boundary values $x|_{ (\mathcal{B}^{\out})^{\out}\backslash \mathcal{B}}$. Such minimisers exist:

\begin{lemma}\label{existence}Let $\mathcal{B}\subset G$ be a compact set and let $f:(\mathcal{B}^{\out})^{\out}\backslash \mathcal{B} \to \R$ be given. Then there exists a minimiser $x$ of (\ref{vp}) on $\mathcal{B}$, with boundary values given by $f$, that is $x|_{(\mathcal{B}^{\out})^{\out}\backslash \mathcal{B}}=f$.
\end{lemma}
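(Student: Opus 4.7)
The plan is to apply the direct method of the calculus of variations after observing that the problem is finite-dimensional. Since $\mathcal{B}$ is compact (hence finite) and $\Gamma$ is locally uniformly bounded, the set $(\mathcal{B}^{\out})^{\out}$ is finite, and $W_{\mathcal{B}^{\out}}(x)$ depends only on the values $x_g$ for $g\in (\mathcal{B}^{\out})^{\out}$. Fixing $x|_{(\mathcal{B}^{\out})^{\out}\setminus \mathcal{B}}=f$ leaves only the values $(x_g)_{g\in\mathcal{B}}\in \R^{\#\mathcal{B}}$ free, and $W_{\mathcal{B}^{\out}}$ restricted to these variables is manifestly continuous (in fact $C^2$), since it is a finite sum of quadratic terms and of values of $V\in C^2$.

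Next I would verify coercivity of this restricted functional on $\R^{\#\mathcal{B}}$. Since $V\ge 0$ (after the normalisation $V(c_0)=V(c_1)=0$ and $c_0,c_1$ being absolute minima), one has
\begin{equation*}
W_{\mathcal{B}^{\out}}(x)\ \ge\ \tfrac{1}{4}\sum_{g\in \mathcal{B}^{\out}}|\nabla_g(x)|^2.
\end{equation*}
Fix any $g_*\in\mathcal{B}$ and choose a geodesic path $g_*=g_0,g_1,\dots,g_L$ in $G$ from $g_*$ to some vertex $g_L\in(\mathcal{B}^{\out})^{\out}\setminus \mathcal{B}$ (which exists since $\Gamma$ is connected and $\mathcal{B}$ is finite); the length $L$ is bounded in terms of $\mathcal{B}$ only. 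Every edge $\{g_i,g_{i+1}\}$ contributes $(x_{g_{i+1}}-x_{g_i})^2$ to $|\nabla_{g_i}(x)|^2$ (or to $|\nabla_{g_{i+1}}(x)|^2$), at least one of which lies in $\mathcal{B}^{\out}$. The Cauchy--Schwarz inequality then gives
\begin{equation*}
W_{\mathcal{B}^{\out}}(x)\ \ge\ \tfrac{1}{4}\sum_{i=0}^{L-1}(x_{g_{i+1}}-x_{g_i})^2\ \ge\ \frac{(x_{g_*}-f(g_L))^2}{4L},
\end{equation*}
so $W_{\mathcal{B}^{\out}}(x)\to\infty$ as $|x_{g_*}|\to\infty$. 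As this holds for every $g_*\in\mathcal{B}$, the functional is coercive on $\R^{\#\mathcal{B}}$.

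Finally, a continuous coercive function on a finite-dimensional Euclidean space attains its infimum: taking a minimising sequence, coercivity gives boundedness, Bolzano--Weierstrass gives a convergent subsequence, and continuity identifies the limit as a minimiser. Extending this minimiser arbitrarily to all of $G$ (e.g.\ by $f$, or by any constant outside $(\mathcal{B}^{\out})^{\out}$, since $W_{\mathcal{B}^{\out}}$ ignores those values) yields $x:G\to\R$, and by construction $x|_{(\mathcal{B}^{\out})^{\out}\setminus \mathcal{B}}=f$. For any $y:G\to\R$ with compact support in $\mathcal{B}$, the function $x+y$ agrees with $x$ on $(\mathcal{B}^{\out})^{\out}\setminus\mathcal{B}$ and thus corresponds to an admissible competitor in the finite-dimensional problem, so $W_{\mathcal{B}^{\out}}(x+y)\ge W_{\mathcal{B}^{\out}}(x)$, verifying definition \ref{def_gm}. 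There is no real obstacle here; the only point to be careful about is that the presence of the Dirichlet-type gradient term, together with the fact that values on $(\mathcal{B}^{\out})^{\out}\setminus \mathcal{B}$ are fixed, provides coercivity even without requiring $V$ itself to be coercive.
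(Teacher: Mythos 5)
Your proof is correct and follows essentially the same route as the paper: both reduce to a finite-dimensional minimisation over $(x_g)_{g\in\mathcal{B}}$ and use non-negativity of $V$ together with the quadratic gradient terms and a path connecting each vertex of $\mathcal{B}$ to the fixed boundary data to obtain compactness of a minimising sequence. Your explicit coercivity estimate via Cauchy--Schwarz is just a cleaner packaging of the paper's bound $|x^n_g-x^n_{gs}|\leq K$ along edges, chained to the boundary values.
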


\begin{proof}
Since $V$ is non-negative $\tilde m:=\inf_x W_{\mathcal{B}^{\out}}(x)\geq0$ holds and because $\mathcal{B}$ is finite, $\tilde m<\infty$. Let $x^n$ be a sequence such that $x^n|_{(\mathcal{B}^{\out})^{\out}\backslash \mathcal{B}}=f$ and $\lim_n W_{\mathcal{B}^{\out}}(x^n)=\tilde m$. Since $W_{\mathcal{B}^{\out}}(x^n) \to \infty$ if for any $g\in \mathcal{B}$ and $s\in S$, $(x^n_{gs}-x^n_g)\to \pm \infty$, it follows that there exists a constant $K>0$ and an $N\in \N$, so that for all $n\geq N$ and all $g\in \mathcal{B}$, $|x^n_g-x^n_{gs}|\leq K$. Since $\mathcal{B}$ is finite, by local uniform finiteness of $\Gamma$ also $(\mathcal{B}^{\out})^{\out}$ is, and so for all $g\in \mathcal{B}$ and $n\geq N$, $|x^n_g|\leq C:=K(\max_{g}f(g))\#\mathcal{B}$. Hence, $x^n$ are contained in a compact set and thus due to continuity of $W_{\mathcal{B}^{\out}}$ converge to a minimiser $x^\infty$ satisfying the boundary conditions given by $f$.
\end{proof}

\begin{remark}\label{xc01}It is easy to see that the constant function $x^{c_0}\equiv c_0$ and $x^{c_1}\equiv c_1$ are global minimisers of (\ref{vp}).
\end{remark}

Obviously, all global minimisers are solutions and it is clear that one may construct global minimisers as limits of minimisers on compact domains that exhaust $G$. More precisely, if $\mathcal{B}^n$ is a sequence of compact subsets exhausting $G$ and $x^n$ a sequence of minimisers on $\mathcal{B}^n$ which converges to a function $x^\infty$, then $x^\infty$ is a global minimiser.

The following two lemmas are standard for elliptic difference operators. For proofs, see \cite{ACAH},  \cite{candel-llave} or \cite{llave-lattices}.

\begin{lemma}\label{mm_lemma}
For functions $x,y$, their point-wise minimum and maximum $m:=\min\{x,y\}$ and $M:=\max\{x,y\}$, and for any compact domain $\mathcal{B}\subset G$ it holds that $$W_{\mathcal{B}}(x)+W_{\mathcal{B}}(y)\geq W_{\mathcal{B}}(M)+W_{\mathcal{B}}(m).$$
\end{lemma}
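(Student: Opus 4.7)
The plan is to reduce the claim to a purely local (edge-wise and vertex-wise) comparison and then verify it by a short case analysis. Splitting the action, write
\[
W_{\mathcal{B}}(x) = \sum_{g\in\mathcal{B}} V(x_g) + \frac{1}{4}\sum_{g\in\mathcal{B}}\sum_{d(\tilde g,g)=1} (x_{\tilde g}-x_g)^2,
\]
and do the same for $y$, $m$, $M$. The potential contribution is trivial: at every vertex $g$ the multiset $\{x_g,y_g\}$ equals $\{m_g,M_g\}$, hence $V(x_g)+V(y_g)=V(m_g)+V(M_g)$, and these terms cancel in the difference $W_{\mathcal{B}}(x)+W_{\mathcal{B}}(y)-W_{\mathcal{B}}(M)-W_{\mathcal{B}}(m)$.

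It remains to show that for every pair of vertices $g,\tilde g$ with $d(g,\tilde g)=1$,
\[
(x_{\tilde g}-x_g)^2+(y_{\tilde g}-y_g)^2 \;\geq\; (M_{\tilde g}-M_g)^2+(m_{\tilde g}-m_g)^2.
\]
If the order of $x$ and $y$ agrees at the two endpoints (say $x_g\leq y_g$ and $x_{\tilde g}\leq y_{\tilde g}$, or the reverse) then $\{M,m\}$ is just a relabelling of $\{x,y\}$ on the edge and the inequality is an equality. If the order is swapped (say $x_g\leq y_g$ and $x_{\tilde g}\geq y_{\tilde g}$) then, writing $a=x_g$, $b=y_g$, $c=x_{\tilde g}$, $d=y_{\tilde g}$ with $a\leq b$ and $c\geq d$, the difference of the two sides computes to
\[
(c-a)^2+(d-b)^2-(c-b)^2-(d-a)^2 \;=\; 2(b-a)(c-d)\;\geq 0,
\]
which is the desired edge-wise estimate. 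The symmetric swapped case is identical.

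Summing the edge inequalities over all pairs $\{g,\tilde g\}$ that appear in $W_{\mathcal{B}}$ (each such pair appearing with the same nonnegative weight on both sides of the comparison, regardless of whether one or both endpoints lie in $\mathcal{B}$), and combining with the pointwise cancellation of the potential terms, gives
\[
W_{\mathcal{B}}(x)+W_{\mathcal{B}}(y)-W_{\mathcal{B}}(M)-W_{\mathcal{B}}(m)\;\geq 0,
\]
as claimed. There is no real obstacle here; the only point requiring a moment's care is book-keeping of the edges at the boundary of $\mathcal{B}$, but since the nonnegative edge inequality holds termwise, any consistent convention for counting boundary edges in $W_{\mathcal{B}}(x)$, $W_{\mathcal{B}}(y)$, $W_{\mathcal{B}}(M)$, $W_{\mathcal{B}}(m)$ produces the same inequality after summation.
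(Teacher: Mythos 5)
Your proof is correct: the potential terms cancel exactly since $\{x_g,y_g\}=\{m_g,M_g\}$ as multisets, and the edge-wise identity $(c-a)^2+(d-b)^2-(c-b)^2-(d-a)^2=2(b-a)(c-d)\geq 0$ for $a\leq b$, $c\geq d$ handles the gradient terms; the book-keeping remark about boundary edges is also sound because the inequality holds termwise for every directed pair. The paper does not prove this lemma itself but defers to the cited references, where the standard argument is exactly this local case analysis, so your proposal matches the intended proof.
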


\begin{lemma}\label{comparison}
Let $x,y$ be two minimisers on $\mathcal{B}$ with $x_g\leq y_g$ for all $g\in (\mathcal{B}^{\out})^{\out}\backslash \mathcal{B}$. Then either $x_g<y_g$ for all $g\in \mathcal{B}$, or $x|_{\mathcal{B}}\equiv y|_{\mathcal{B}}$.

In particular, any two global minimisers $x,y$ with $x_g\leq y_g$ for all $g\in G$ and such that $x\neq y$, are totally ordered: $x_g<y_g$ for all $g\in G$.
\end{lemma}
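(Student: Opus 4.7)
The plan is a discrete strong maximum principle, run in two passes. First I would introduce the pointwise max and min $M := \max\{x,y\}$ and $m := \min\{x,y\}$. Because $x \leq y$ on the outer strip $(\mathcal{B}^{\out})^{\out} \setminus \mathcal{B}$, one has $M = y$ and $m = x$ there, so $M - y$ and $m - x$ are supported in $\mathcal{B}$; minimality of $y$ and $x$ then yields $W_{\mathcal{B}^{\out}}(M) \geq W_{\mathcal{B}^{\out}}(y)$ and $W_{\mathcal{B}^{\out}}(m) \geq W_{\mathcal{B}^{\out}}(x)$. Combining these with the opposite inequality $W_{\mathcal{B}^{\out}}(x) + W_{\mathcal{B}^{\out}}(y) \geq W_{\mathcal{B}^{\out}}(M) + W_{\mathcal{B}^{\out}}(m)$ from Lemma \ref{mm_lemma} forces equality throughout, in particular $W_{\mathcal{B}^{\out}}(M) = W_{\mathcal{B}^{\out}}(y)$. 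Writing $M + z = y + (M - y) + z$ for any variation $z$ supported in $\mathcal{B}$, this equality promotes $M$ (and symmetrically $m$) to a minimiser on $\mathcal{B}$, so $x$, $y$ and $M$ all solve the Allen-Cahn equation (\ref{rr}) at every vertex of $\mathcal{B}$.

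Next, suppose $x_{g_0} = y_{g_0}$ for some $g_0 \in \mathcal{B}$. I would apply the strong maximum principle to $u := M - y$: it is non-negative everywhere, vanishes on the outer strip, and satisfies $\Delta_g u = V'(M_g) - V'(y_g)$ on $\mathcal{B}$. At $g_0$ both sides vanish, so $\sum_{\tilde g \sim g_0} u_{\tilde g} = \deg(g_0)\, u_{g_0} = 0$, and non-negativity forces $u_{\tilde g} = 0$ at every neighbour. Iterating through $\mathcal{B}$ along edges --- and using that the Euler--Lagrange equation is available at every vertex of $\mathcal{B}$ --- propagates $u \equiv 0$ on the connected component of $g_0$, which I take to be $\mathcal{B}$. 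Hence $x_g \leq y_g$ for every $g \in \mathcal{B}$.

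Now that the ordering holds pointwise on $\mathcal{B}$, I would repeat the maximum principle with $u' := y - x \geq 0$, which satisfies $\Delta_g u' = V'(y_g) - V'(x_g)$ on $\mathcal{B}$ since both $x$ and $y$ were already minimisers. Starting again at $g_0$ with $u'_{g_0} = 0$, exactly the same propagation yields $u' \equiv 0$ on $\mathcal{B}$, i.e.\ $x \equiv y$ on $\mathcal{B}$, proving the stated dichotomy. The ``in particular'' part then follows automatically: if two global minimisers $x \leq y$ with $x \neq y$ coincided at some $g_0 \in G$, restricting to any compact $\mathcal{B} \ni g_0$ would place us in the hypotheses of the first part and force $x \equiv y$ on $\mathcal{B}$; exhausting $G$ by such $\mathcal{B}$ contradicts $x \neq y$.

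The main technical obstacle is the first pass. One cannot apply a strong maximum principle to $y - x$ directly, because the sign of this difference inside $\mathcal{B}$ is not known a priori --- only on the boundary strip. Getting around this forces one to work with $M - y$, which is automatically non-negative but whose pointwise equation demands that $M$ itself satisfy (\ref{rr}) on $\mathcal{B}$; this is precisely what the combination of Lemma \ref{mm_lemma} with the minimality of $y$ delivers, and it is what makes the whole argument run.
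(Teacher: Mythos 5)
Your proof is correct and is essentially the standard argument that the paper itself defers to its references (\cite{ACAH}, \cite{candel-llave}, \cite{llave-lattices}): Lemma \ref{mm_lemma} plus minimality of $x$ and $y$ forces $W_{\mathcal{B}^{\out}}(M)=W_{\mathcal{B}^{\out}}(y)$ and $W_{\mathcal{B}^{\out}}(m)=W_{\mathcal{B}^{\out}}(x)$, so $M,m$ are minimisers and hence solve (\ref{rr}) on $\mathcal{B}$, after which the discrete strong maximum principle propagates a touching point in two passes exactly as you describe. Only two cosmetic remarks: redefine $M:=y$ outside $(\mathcal{B}^{\out})^{\out}$ so that $M-y$ is genuinely compactly supported in $\mathcal{B}$ (this changes none of the energies involved), and note that the propagation uses connectedness of the subgraph induced on $\mathcal{B}$, which you rightly flag and which is implicit in the statement of the dichotomy.
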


\begin{corollary}\label{trapping}
Let $f:(\mathcal{B}^{\out})^{\out}\backslash \mathcal{B} \to [c_0,c_1]$ be given. Then it holds for every minimiser $x$ on $\mathcal{B}$ satisfying the boundary conditions given by $f$ that $c_0\leq x_g\leq c_1$ for all $g\in(\mathcal{B}^{\out})^{\out}$. 
\end{corollary}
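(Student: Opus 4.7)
The plan is to invoke the comparison principle (Lemma \ref{comparison}) against the two trivial constant minimisers. By Remark \ref{xc01}, the functions $x^{c_0}\equiv c_0$ and $x^{c_1}\equiv c_1$ are global minimisers. Since a compactly supported variation $y$ on $\mathcal{B}$ only changes values at vertices in $\mathcal{B}\subset \mathcal{B}^{\out}$ and since for any $y$ with support in $\mathcal{B}$ one has $W(z+y)-W(z)=W_{\mathcal{B}^{\out}}(z+y)-W_{\mathcal{B}^{\out}}(z)$ (the differences away from $\mathcal{B}^{\out}$ cancel), global minimality immediately implies that both $x^{c_0}$ and $x^{c_1}$ are minimisers on $\mathcal{B}$ in the sense of Definition \ref{def_gm}. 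This puts them in a position to be compared with the given minimiser $x$.

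By the hypothesis on $f$, on the boundary set $(\mathcal{B}^{\out})^{\out}\setminus\mathcal{B}$ we have $x^{c_0}_g = c_0 \leq f(g) = x_g \leq c_1 = x^{c_1}_g$. I would then apply Lemma \ref{comparison} twice. Comparing the pair $(x^{c_0},x)$ yields the dichotomy that either $c_0<x_g$ strictly for every $g\in\mathcal{B}$, or else $x|_{\mathcal{B}}\equiv c_0$; in both cases $x_g\geq c_0$ on $\mathcal{B}$. The symmetric application to $(x,x^{c_1})$ yields $x_g\leq c_1$ on $\mathcal{B}$. Together with the assumed pointwise bounds on $f$ on the complementary set $(\mathcal{B}^{\out})^{\out}\setminus\mathcal{B}$, this extends the two-sided inclusion from $\mathcal{B}$ to all of $(\mathcal{B}^{\out})^{\out}$, which is exactly the conclusion of the corollary.

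I do not foresee any substantial obstacle: the entire content of the statement is the weak maximum principle encoded in Lemma \ref{comparison}, and the only thing to verify is that the constant functions qualify as minimisers on $\mathcal{B}$, which is immediate from global minimality. In this sense the statement is a genuine corollary rather than requiring an independent argument.
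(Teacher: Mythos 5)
Your proof is correct and is precisely the intended derivation: the paper states this as an immediate corollary of Lemma \ref{comparison} applied to the constant global minimisers of Remark \ref{xc01}, which is exactly your argument. The one point worth making explicit --- that a global minimiser is a minimiser on $\mathcal{B}$ because variations supported in $\mathcal{B}$ only affect the action terms indexed by $\mathcal{B}^{\out}$ --- you have verified correctly.
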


\subsection{Minimal Dirichlet problem at infinity}

We wish to construct global minimisers that solve the minimal Dirichlet problem at infinity as given in the following definition. 

\begin{definition}\label{dp_def}
Endow $G\cup \p \Gamma$ with the topology induced by the visual metric (which is by lemma \ref{topology} equivalent to the cone topology from definition \ref{cone}). Given two sets $D_0, D_1\subset \p \Gamma$ with $\overline{\mathring{D}}_0=D_0$ and $\overline{\mathring{D}}_1=D_1$, such that $\mathring{D_0}\cap \mathring{D_1}=\varnothing$ and $\overline{D_0}\cup \overline{D_1}=\p \Gamma$,
we say that a global minimiser $x$ solves the minimal Dirichlet problem at infinity for the equation (\ref{vp}) on $D_0$ and $D_1$, if the following holds.

For every $\xi \in \mathring{D}_j$ with $j\in \{0,1\}$ and for any $\epsilon>0$ there exists an open neighbourhood $\mathcal{O}_\xi\subset G\cup \p \Gamma$ of $\xi$, such that for all $g\in G\cap\mathcal{O}_\xi$, $|x_g-c_j|<\epsilon$.
\end{definition}

In the following proposition, we shall construct a global minimiser as a limit of local minimisers $x^N$ on balls $\mathcal{B}_N$ with growing radii $N\in \N$, using the concept of the cone from definition \ref{cone}. The difficult part, which the rest of the paper is then devoted to, deals with the asymptotics of such a global minimiser. 

From here on we shall focus on the asymptotics of $x^N$ in $D_1$, i.e. we aim to show that if $\xi \in \mathring{D}_1$ and $x^N$ a sequence of such local minimisers, then for every $\varepsilon>0$ we may find a small neighbourhood of $\xi$ in $G\cup \p \Gamma$, such that for all $g$ in this neighbourhood and for all large enough $N$, $|x^N_g-c_1|<\varepsilon$. To get the final statement we note that modulo some trivial reformulations of the conditions, the roles of $c_0$ and $c_1$ may be swapped on every stage of the discussion below.

\begin{proposition}\label{local_minimisers}
Let $c_0< c_1$ be the two distinct absolute minima of $V$ and let $D_0, D_1\subset \p \Gamma$ be as in definition \ref{dp_def}.
Define a function $\tilde x$ by $$\displaystyle \tilde x_g:=\begin{cases}c_1 & \text{ if } g\in \mathcal{C}_{\mathring{D}_1} ,\\ c_0 & \text{ else.} \end{cases}$$
For every $N\in \N$, let $x^N:G\to \R$ be a minimiser on $\mathcal{B}_N$, such that $x^N|_{(\mathcal{B}_N)^c}\equiv \tilde x|_{(\mathcal{B}_N)^c}$.
Then, as $N\to \infty$, $x^N$ converge along a subsequence to a global minimiser $\bar x$ of (\ref{rr}).
\end{proposition}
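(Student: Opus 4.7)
The plan is the standard three-step compactness/exhaustion argument for variational problems of this type.

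First, I would invoke Corollary \ref{trapping}. Since $\tilde x$ only takes the values $c_0$ and $c_1$, the boundary data prescribed on $(\mathcal{B}_N^{\out})^{\out}\setminus\mathcal{B}_N$ lies in $[c_0,c_1]$, so the trapping principle forces $c_0\leq x^N_g\leq c_1$ for every $g\in G$ and every $N\in\N$. Hence the sequence $(x^N)_{N\in\N}$ is pointwise uniformly bounded in the compact interval $[c_0,c_1]$. Since $G$ is countable, a Cantor diagonal extraction then produces a subsequence (still denoted $(x^N)$) that converges pointwise on $G$ to a limit $\bar x:G\to[c_0,c_1]$.

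Second, I would verify that $\bar x$ is a global minimiser by passing to the limit in the variational inequality. Fix any variation $y:G\to\R$ with compact support, say $\supp(y)\subset\mathcal{B}_M$. For every $N\geq M$ we have $\supp(y)\subset\mathcal{B}_N$, and the minimising property of $x^N$ on $\mathcal{B}_N$ yields
$$W_{\mathcal{B}_N^{\out}}(x^N+y)-W_{\mathcal{B}_N^{\out}}(x^N)\geq 0.$$
The key observation is that only finitely many summands survive the subtraction, and the set of surviving indices is independent of $N$: a term indexed by $g$ cancels unless $y_g\neq 0$ or $y_{\tilde g}\neq 0$ for some neighbour $\tilde g$ of $g$. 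By the local uniform boundedness of $\Gamma$, the set of such vertices is contained in a fixed finite set $\mathcal{F}\subset\mathcal{B}_{M+1}$. Hence the displayed difference equals $W_{\mathcal{F}}(x^N+y)-W_{\mathcal{F}}(x^N)$, which depends only on values of $x^N$ at the finitely many vertices of $\mathcal{F}^{\out}$. By the pointwise convergence $x^N\to\bar x$ on $\mathcal{F}^{\out}$ together with continuity of $V$ and of the discrete gradient terms, the difference tends to $W_{\mathcal{F}}(\bar x+y)-W_{\mathcal{F}}(\bar x)\geq 0$, which by definition \ref{def_gm} shows that $\bar x$ is a global minimiser; in particular, $\bar x$ solves (\ref{rr}).

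There is no substantial obstacle in this proposition: the a priori $L^\infty$-bound from Corollary \ref{trapping} does all the heavy lifting, reducing subsequential convergence to a diagonal extraction, and the locality of the action functional lets one pass to the limit in the variational inequality termwise. The genuinely difficult content of the paper begins only afterwards, when one must control the asymptotic behaviour of the limit $\bar x$ on the boundary at infinity using the isoperimetric inequality and the cone geometry developed in section \ref{geometric_graphs}.
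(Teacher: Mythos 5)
Your proof is correct and follows essentially the same route as the paper: an a priori bound $c_0\leq x^N_g\leq c_1$ (the paper cites lemma \ref{comparison}, of which corollary \ref{trapping} is the relevant consequence), compactness of $[c_0,c_1]^G$ (the paper invokes Tychonov, you use the equivalent diagonal extraction over the countable set $G$), and stability of the minimising property under pointwise limits. The only difference is that the paper delegates the last step to an earlier unproved remark, whereas you spell out the locality argument explicitly — a welcome addition, not a deviation.
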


\begin{proof}
By lemma \ref{existence} such a minimiser $x^N$ exists and by lemma \ref{comparison}, $c_0\leq x^N_g\leq c_1$ for all $N\in \N$ and all $g\in G$. By Tychonov's theorem $[c_0,c_1]^{G}$ is a compact space w.r.t. pointwise convergence, so $x^N$ has a convergent subsequence $x^{N_k}\to \bar x$, which is then a global minimiser.

\end{proof}

\subsection{Minimal solutions and boundaries of sets}

In this section we investigate some less standard properties of minimal solutions on a set $\mathcal{B}$ with boundary values from the set $[c_0,c_1]$. Before proceeding with the analysis, we state the following simple proposition that quantifies the behaviour of $V$ around the absolute minima $c_0$ and $c_1$.

\begin{proposition}\label{V}
There exist constants $\rho_0, m_1>0$ and $b\geq1$, such that $4b\rho_0\leq c_1-c_0$ and such that it holds  for all $\rho \in (0,\rho_0]$:
\begin{itemize}
	\item[(a)] 
	On $[c_1-2b \rho,c_1]$ is $V'$ a monotone increasing negative function, bounded from above by $m_1(y-c_1)\geq V'(y)$.
	\item[(b)] 
	$V(c_0+y)\geq V(c_1-b^{-1}y)$ for all $y\in [0,2b\rho]$.
	\item[(c)] For every $\rho\in (0,\rho_0]$ there exists a $\beta>0$, such that for all $y\in [c_0+2b \rho,c_1-2b \rho]$ $$V(y)\geq \max\{V(c_0+b \rho), V(c_1-b \rho)\}+\beta.$$
	\item[(d)] There exists a $\tilde \rho>0$ with $\tilde \rho<c_1-c_0-2b\rho$ such that $V(c_0+\tilde \rho)=V(c_1-2b \rho)$ and such that for all $y\in[c_0+\tilde \rho,c_1-2b \rho]$, $V(y)\geq V(c_0+\tilde \rho)$. Moreover, $2b\rho\leq (c_1-c_0-2b\rho-\tilde \rho)$.
\end{itemize}
\end{proposition}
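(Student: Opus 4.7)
The proposition compiles four elementary analytic properties of a $C^2$ double well $V$ near two consecutive non-degenerate minima $c_0<c_1$ with $V(c_0)=V(c_1)=0$. My strategy is to choose constants in the order $b$, then $m_1$, then $\rho_0$, tightening $\rho_0$ at each step so that the relevant Taylor expansions and monotonicity statements hold on the required intervals.

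For (a), non-degeneracy gives $V''(c_1)>0$, so by continuity $V''(y)\geq m_1:=V''(c_1)/2$ on some $[c_1-\eta_a,c_1]$. On this interval $V'(c_1)=0$ together with $V''>0$ forces $V'$ to be strictly increasing and negative, and integrating gives $V'(y)=-\int_y^{c_1}V''(s)\,ds\leq -m_1(c_1-y)=m_1(y-c_1)$. For (b), set $f(y):=V(c_0+y)-V(c_1-y/b)$ and compute $f(0)=f'(0)=0$ (using $V'(c_0)=V'(c_1)=0$) and $f''(0)=V''(c_0)-V''(c_1)/b^2$. Choosing $b\geq 1$ with $b^2>V''(c_1)/V''(c_0)$ makes $f''(0)>0$, so $f$ is strictly convex on some $[0,\eta_b]$ and hence nonnegative there. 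Requiring $2b\rho_0\leq\min\{\eta_a,\eta_b\}$ and $4b\rho_0\leq c_1-c_0$ then secures (a) and (b) for every $\rho\in(0,\rho_0]$.

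Parts (c) and (d) rest on the global shape of $V$. Because $c_0$ and $c_1$ are the only absolute minima and are consecutive, $V>0$ on the open interval $(c_0,c_1)$, and by non-degeneracy there exists $\eta_0>0$ so that $V$ is strictly increasing on $[c_0,c_0+\eta_0]$ and strictly decreasing on $[c_1-\eta_0,c_1]$; by compactness $V$ has a positive lower bound $\beta_0>0$ on $[c_0+\eta_0,c_1-\eta_0]$. For (c), shrink $\rho_0$ so that $2b\rho_0<\eta_0$ and $\max\{V(c_0+b\rho_0),V(c_1-b\rho_0)\}<\beta_0$. Splitting $[c_0+2b\rho,c_1-2b\rho]$ into its three natural subintervals, strict monotonicity of $V$ near the endpoints yields $V(y)\geq V(c_0+2b\rho)>V(c_0+b\rho)$ and analogously near $c_1$, while $V(y)\geq\beta_0>\max\{V(c_0+b\rho),V(c_1-b\rho)\}$ on the middle piece; compactness then extracts a uniform $\beta>0$.

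For (d), since $V$ is strictly monotone on $[c_0,c_0+\eta_0]$ with $V(c_0)=0$, the intermediate value theorem picks out a unique $\tilde\rho\in(0,\eta_0)$ with $V(c_0+\tilde\rho)=V(c_1-2b\rho)$, valid because $V(c_1-2b\rho)<V(c_0+\eta_0)$ once $\rho_0$ is small enough. The same three-piece splitting as in (c) delivers $V(y)\geq V(c_0+\tilde\rho)$ on $[c_0+\tilde\rho,c_1-2b\rho]$: monotonicity on the near-$c_0$ and near-$c_1$ pieces, and $\beta_0>V(c_0+\tilde\rho)$ in the middle for $\rho_0$ small. Finally $\tilde\rho\to 0$ as $\rho\to 0$, so shrinking $\rho_0$ once more secures $\tilde\rho\leq c_1-c_0-4b\rho$, which gives both $\tilde\rho<c_1-c_0-2b\rho$ and $2b\rho\leq c_1-c_0-2b\rho-\tilde\rho$. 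The only content-bearing step is the sign computation $f''(0)=V''(c_0)-V''(c_1)/b^2$ that forces the choice of $b$; everything else is a routine combination of Taylor expansion, continuity, and compactness.
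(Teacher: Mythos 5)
Your treatments of (a), (b) and (d) are correct and supply exactly the details that the paper's own proof (a two-sentence appeal to non-degeneracy, Taylor's theorem and the ``consecutive absolute minima'' hypothesis) omits: the choice $b^2>V''(c_1)/V''(c_0)$ forced by the sign of $f''(0)$ is the right mechanism for (b), and in (d) the comparison between the two wells is automatic because $\tilde\rho$ is \emph{defined} by the level-matching condition $V(c_0+\tilde\rho)=V(c_1-2b\rho)$.

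Part (c), however, has a genuine gap. On the subinterval $[c_0+2b\rho,\,c_0+\eta_0]$ you verify only $V(y)\geq V(c_0+2b\rho)>V(c_0+b\rho)$, i.e.\ you compare against the same-side term of the maximum; the claim also requires the cross-inequalities $V(c_0+2b\rho)\geq V(c_1-b\rho)+\beta$ and, symmetrically, $V(c_1-2b\rho)\geq V(c_0+b\rho)+\beta$, and neither is addressed. This is not cosmetic. To second order $V(c_0+2b\rho)\sim 2V''(c_0)b^2\rho^2$ while $V(c_1-b\rho)\sim \tfrac12 V''(c_1)b^2\rho^2$, so the first cross-inequality amounts to $4V''(c_0)>V''(c_1)$ --- a condition independent of $b$ that fails for small $\rho$ whenever the wells are sufficiently asymmetric, which is precisely the regime in which your part (b) forces $b>2$. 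Routing through (b) does not help: it gives $V(c_0+2b\rho)\geq V(c_1-2\rho)$, and $V(c_1-2\rho)\geq V(c_1-b\rho)$ again needs $b\leq 2$. So you should either restrict to $\tfrac14<V''(c_0)/V''(c_1)<4$, or (better, and consistent with how (c) is actually invoked in lemma \ref{ts_estimate}) prove the weaker inequality $V(y)\geq V(c_1-\rho)+\beta$ on $[c_0+2b\rho,c_1-2b\rho]$, which does follow from your three-piece splitting once the near-$c_0$ endpoint is handled via (b) and the strict monotonicity of $V$ on $[c_1-2\rho,c_1-\rho]$.
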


\begin{proof}
Parts (a) and (b) follow from the non-degenericity condition on $c_0$ and $c_1$ and Taylor's theorem. Part (c) follows from the fact that $c_0,c_1\in \R$ are consecutive absolute minima and part (d) from both, Taylor's theorem and the fact that there are no other absolute minima in $[c_0,c_1]$.
\end{proof}

For the remainder of this section, let $x$ be a minimiser on a finite set $\mathcal{B}$, satisfying the boundary conditions given by a function $f:\Gamma\backslash \mathcal{B} \to [c_0,c_1]$. For $\underline{c}<\overline{c} \in [c_0,c_1]$ define the interval sets $$\mathcal{B}^{(\underline{c},\overline{c})}:=\{g\in \mathcal{B}\subset G \ | \ x_g\in (\underline{c},\overline{c})\}$$ and analogously for half-open and closed intervals from $\underline{c}$ to $\overline{c}$. Furthermore, define for $x:G\to \R$, $c\in \R$ and a set $\mathcal{D}\subset G$ the function $$P_{\mathcal{D}}(x,c):=\sum_{g\in \mathcal{D}}\Big(V(x_{ g})-V(c)\Big).$$ The following lemma gives an essential estimate, which combined with (\ref{IP}) and the growth bound (\ref{gob}) gives us the main lemma bellow (i.e. lemma \ref{main_lemma}).

\begin{lemma}\label{ts_estimate}Let $\mathcal{D}\subset G$ be a subset, the constants $\rho, b$ and $ m_1$ as in proposition \ref{V}, and let $x$ be a minimiser on a finite set $\mathcal{B}$, satisfying the boundary conditions given by a function $f:G\backslash\mathcal{B} \to [c_0,c_1]$. Define $\mathcal{B}^l:=\mathcal{B}^{[c_0,c_1-2b\rho)}$ and $\mathcal{B}^h:=\mathcal{B}^{[c_1-2b\rho, c_1-\rho)}$.
Then there exist a constant $k_0>0$, such that $$
\#(\p^{\out}\mathcal{B}^l\cap \mathcal{D}^{(2\inn)})+P_{\mathcal{B}^h\cap \mathcal{D}^{\inn}}(x,c_1-\rho)
\leq k_0 \Big(\#(\mathcal{B}^l\cap \p^{\f}\mathcal{D})+P_{\mathcal{B}^h\cap \p^{\out}\mathcal{D}}(x,c_1-\rho)\Big)
.$$
\end{lemma}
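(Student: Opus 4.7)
The plan is to test the minimality of $x$ against a competitor $\tilde{x}$ that agrees with $x$ outside $\mathcal{D}^{\inn}$ and inside pushes $x$ upward toward the well at $c_1$. Writing $\mathcal{B}^{lh}:=\mathcal{B}^{[c_0+\tilde\rho,c_1-2b\rho)}$ and $\mathcal{B}^{ll}:=\mathcal{B}^{[c_0,c_0+\tilde\rho)}$ so that $\mathcal{B}^l=\mathcal{B}^{ll}\cup\mathcal{B}^{lh}$, I set $\tilde x_g:=c_1-\rho$ for $g\in \mathcal{D}^{\inn}\cap(\mathcal{B}^{lh}\cup\mathcal{B}^h)$, use the reflection $\tilde x_g:=c_1-b^{-1}(x_g-c_0)$ for $g\in \mathcal{D}^{\inn}\cap \mathcal{B}^{ll}$ (which by proposition \ref{V}(b) satisfies $V(\tilde x_g)\leq V(x_g)$), and leave $\tilde x_g:=x_g$ elsewhere. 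The perturbation is supported in $(\mathcal{B}^l\cup\mathcal{B}^h)\cap \mathcal{D}^{\inn}\subset \mathcal{B}$, so the minimality of $x$ yields $W_{\mathcal{B}^{\out}}(\tilde x)\geq W_{\mathcal{B}^{\out}}(x)$, which I split as $\Delta W_{\mathrm{pot}}+\Delta W_{\mathrm{grad}}\geq 0$.

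The potential change is non-positive and decomposes as $\Delta W_{\mathrm{pot}}\leq -P_{\mathcal{B}^h\cap \mathcal{D}^{\inn}}(x,c_1-\rho)-\alpha\,\#(\mathcal{B}^{lh}\cap \mathcal{D}^{\inn})$ for an explicit constant $\alpha>0$ coming from proposition \ref{V}(c,d) (since $V(y)\geq V(c_1-2b\rho)>V(c_1-\rho)$ for $y\in \mathcal{B}^{lh}$), while the $\mathcal{B}^{ll}$-reflection contributes $\leq 0$. The gradient change $\Delta W_{\mathrm{grad}}$ is non-positive on edges whose two endpoints are modified and lie in the same block, as a direct case check shows; so the only positive contributions come from (i) edges crossing the boundary $\p^{\f}\mathcal{D}^{\inn}$ and (ii) ``mixed'' edges inside $\mathcal{D}^{\inn}$ joining a reflected $\mathcal{B}^{ll}$-vertex to a lifted $\mathcal{B}^{lh}\cup\mathcal{B}^h$-vertex. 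On each such edge one has $|\tilde x_g-\tilde x_h|^2\leq (c_1-c_0)^2$; combining the uniform valence bound $S$ with the convex estimate $(c_1-\rho-y)^2\leq C(V(y)-V(c_1-\rho))$ for $y\in[c_1-2b\rho,c_1-\rho)$, immediate from proposition \ref{V}(a), the total positive crossing contribution is bounded by $C'\bigl(\#(\mathcal{B}^l\cap \p^{\f}\mathcal{D})+P_{\mathcal{B}^h\cap \p^{\out}\mathcal{D}}(x,c_1-\rho)\bigr)$.

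To replace $\alpha\,\#(\mathcal{B}^{lh}\cap \mathcal{D}^{\inn})$ by the claimed count $\#(\p^{\out}\mathcal{B}^l\cap \mathcal{D}^{(2\inn)})$, I observe that every $g\in \p^{\out}\mathcal{B}^l\cap \mathcal{D}^{(2\inn)}$ either lies in $\mathcal{B}^h$, in which case it already contributes to $P_{\mathcal{B}^h\cap \mathcal{D}^{\inn}}$ on the LHS, or satisfies $x_g\geq c_1-\rho$ and has some neighbour $g'\in \mathcal{B}^l\cap \mathcal{D}^{\inn}$. If $g'\in \mathcal{B}^{lh}$, it participates in the $\alpha$-saving; if $g'\in \mathcal{B}^{ll}$, then $x_g-x_{g'}>(c_1-\rho)-(c_0+\tilde\rho)\geq 2b\rho$ by proposition \ref{V}(d), so the edge $(g,g')$ carries a gradient of at least $(2b\rho)^2$ in the original action, providing a uniform positive cost available in lieu of the $\alpha$-saving. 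Rearranging $\Delta W_{\mathrm{pot}}+\Delta W_{\mathrm{grad}}\geq 0$ and absorbing the fixed constants gives the estimate with an explicit $k_0=k_0(S,(c_1-c_0)^2,\alpha,m_1,b,\rho)$.

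The main obstacle is the gradient bookkeeping for the ``mixed'' edges in category (ii) and the boundary edges in (i): the reflection from proposition \ref{V}(b) is indispensable to prevent an increase of potential at $\mathcal{B}^{ll}$-vertices, but it genuinely creates gradient costs across the threshold $c_0+\tilde\rho$ that must be absorbed carefully, either into the $\alpha$-saving from proposition \ref{V}(c,d) (for mixed edges lying in the interior, away from $\p^{\f}\mathcal{D}$) or into the RHS count $\#(\mathcal{B}^l\cap \p^{\f}\mathcal{D})$ (for edges that touch the boundary of $\mathcal{D}$). This is precisely the technically most difficult step required to move beyond the perturbative setting of \cite{ACAH}.
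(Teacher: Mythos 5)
Your overall strategy is the same as the paper's: compare $x$ with an explicit competitor that lifts the sublevel set $\{x<c_1-\rho\}\cap\mathcal{D}$ onto the plateau $c_1-\rho$, reflects the lowest values across to $[c_1-\rho,c_1]$ so that proposition \ref{V}(b) kills the potential cost there, harvests the potential gain in the interior, and charges the gradient loss to edges meeting $\p^{\f}\mathcal{D}$ (with the convexity estimate $(c_1-\rho-y)^2\le 2m_1^{-1}(V(y)-V(c_1-\rho))$ from \ref{V}(a) exactly as in the paper). The conversion of the count $\#(\p^{\out}\mathcal{B}^l\cap\mathcal{D}^{(2\inn)})$ into per-vertex potential or gradient gains at interior neighbours also matches the paper's argument.

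The gap is in your choice of reflection zone, and it is exactly the step you flag as delicate. You reflect on $\mathcal{B}^{[c_0,c_0+\tilde\rho)}$, whereas the paper reflects on $\mathcal{B}^{[c_0,c_0+b\rho)}$. The paper's threshold is the unique one for which the modification glues continuously to the plateau: $c_1-b^{-1}(x_g-c_0)\to c_1-\rho$ as $x_g\to c_0+b\rho$. With that choice the competitor is a monotone, $1$-Lipschitz-in-$x_g$ deformation on $[c_0,c_1-\rho)$ (since $b\ge 1$), and a short case check shows that \emph{no} interior edge increases its gradient; the only losses occur on edges crossing $\p\mathcal{D}$. With your threshold $c_0+\tilde\rho$ the reflection of $c_0+\tilde\rho$ is $c_1-b^{-1}\tilde\rho\ne c_1-\rho$ in general, so ``mixed'' interior edges near the junction (original difference arbitrarily small, new difference of order $|\rho-b^{-1}\tilde\rho|$) genuinely increase the energy, and your proposed absorption does not close: the per-edge bound $(c_1-c_0)^2$ you state is of order one, while the saving $\alpha=V(c_1-2b\rho)-V(c_1-\rho)$ available at a $\mathcal{B}^{lh}$-vertex is $O(\rho^2)$ and must moreover serve up to $S$ incident mixed edges \emph{and} still produce the count $\#(\p^{\out}\mathcal{B}^l\cap\mathcal{D}^{(2\inn)})$ on the left-hand side; even the sharper per-edge bound $O(\rho^2)$ fails for large valence $S$. (You also need $\tilde\rho\le 2b\rho$ for \ref{V}(b) to apply on your $\mathcal{B}^{ll}$, which is not guaranteed.) A secondary bookkeeping issue: supporting the variation on $\mathcal{D}^{\inn}$ rather than on $\mathcal{D}$ makes the unmodified neighbours lie in $\p^{\inn}\mathcal{D}$, so your boundary charge naturally produces $P_{\mathcal{B}^h\cap\p^{\inn}\mathcal{D}}$ rather than the stated $P_{\mathcal{B}^h\cap\p^{\out}\mathcal{D}}$. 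Both problems disappear if you adopt the paper's competitor: reflect on $[c_0,c_0+b\rho)$, set $c_1-\rho$ on $[c_0+b\rho,c_1-\rho)$, and take the support to be $\mathcal{B}^{[c_0,c_1-\rho)}\cap\mathcal{D}$.
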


\begin{proof}
Consider the following variation of $x$: $$\tilde x^\mathcal{D}_g:=\begin{cases}c_1- \rho & \text{ for } g\in  \mathcal{B}^{[c_0+\rho b,c_1-\rho)}\cap \mathcal{D},\\ c_1-b^{-1}(x_g-c_0) & \text{ for } g \in \mathcal{B}^{[c_0,c_0+\rho b)}\cap \mathcal{D},\\ x_g & \text{ else, } \end{cases}$$
 for which $\text{supp}(\tilde x^\mathcal{D}-x)\subset \mathcal{B}^{[c_0,c_1-\rho)}\cap \mathcal{D}$ holds. 
 
Define $\tilde{\mathcal{D}}:=(\mathcal{B}^{[c_0,c_1-\rho)}\cap \mathcal{D})^{\out}$. We split the proof of this lemma in three parts, each worked out below. In part 1, we will show that \begin{equation}\label{Eq1}W_{\tilde{\mathcal{D}}\cap \mathcal{D}^{\inn}}(x)- W_{\tilde{\mathcal{D}}\cap \mathcal{D}^{\inn}}(\tilde x^\mathcal{D})\geq 0.\end{equation} Since $x$ is a minimiser, $W_{\tilde{\mathcal{D}}}(\tilde x^\mathcal{D})- W_{\tilde{\mathcal{D}}}(x)\geq0,$ so by writing $$W_{\tilde{\mathcal{D}}\backslash \mathcal{D}^{\inn}}(\tilde x^\mathcal{D})- W_{\tilde{\mathcal{D}}\backslash \mathcal{D}^{\inn}}(x)= W_{\tilde{\mathcal{D}}}(\tilde x^\mathcal{D})- W_{\tilde{\mathcal{D}}}(x) + W_{\tilde{\mathcal{D}}\cap \mathcal{D}^{\inn}}(x)- W_{\tilde{\mathcal{D}}\cap \mathcal{D}^{\inn}}(\tilde x^\mathcal{D}),$$ we notice  that $$W_{\tilde{\mathcal{D}}\backslash \mathcal{D}^{\inn}}(\tilde x^\mathcal{D})- W_{\tilde{\mathcal{D}}\backslash \mathcal{D}^{\inn}}(x)\geq W_{\tilde{\mathcal{D}}\cap \mathcal{D}^{\inn}}(x)- W_{\tilde{\mathcal{D}}\cap \mathcal{D}^{\inn}}(\tilde x^\mathcal{D})$$
follows from (\ref{Eq1}). Let us write $\tilde{\mathcal{D}}$ as the disjoint union $\tilde{\mathcal{D}}=(\mathcal{B}^l\cap\tilde{\mathcal{D}})\cup (\mathcal{B}^h\cap\tilde{\mathcal{D}})$ and trivially estimate $$W_{\tilde{\mathcal{D}}\cap \mathcal{D}^{\inn}}(x)- W_{\tilde{\mathcal{D}}\cap \mathcal{D}^{\inn}}(\tilde x^\mathcal{D})\geq W_{\p^{\inn}\mathcal{B}^l\cap \mathcal{D}^{\inn}}(x)- W_{\p^{\inn}\mathcal{B}^l\cap \mathcal{D}^{\inn}}(\tilde x^\mathcal{D})+\sum_{g\in \mathcal{B}^h \cap \mathcal{D}^{\inn}}(V(x_g)-V(\tilde x^\mathcal{D})).$$ In the second part of the proof below we show that there exists a positive constant $\tilde{k}_1$, such that \begin{equation}\label{Eq2}W_{\p^{\inn}\mathcal{B}^l\cap \mathcal{D}^{\inn}}(x)-W_{\p^{\inn}\mathcal{B}^l\cap \mathcal{D}^{\inn}}(\tilde x^\mathcal{D})\geq \tilde{k}_1\#(\p^{\out}\mathcal{B}^{l}\cap (\mathcal{D}^{\inn})^{\inn}) .\end{equation} To obtain this estimate, no terms $V(x_g)$ with $g\in \mathcal{B}^h \cap \mathcal{D}^{\inn}$ were involved, so it follows since $\tilde{k}_1\leq 1$ that $$W_{\tilde{\mathcal{D}}\cap \mathcal{D}^{\inn}}(x)- W_{\tilde{\mathcal{D}}\cap \mathcal{D}^{\inn}}(\tilde x^\mathcal{D})\geq \tilde{k}_1\Big(\#(\p^{\out}\mathcal{B}^{l}\cap (\mathcal{D}^{\inn})^{\inn})+\sum_{g\in \mathcal{B}^h \cap \mathcal{D}^{\inn}}(V(x_g)-V(c_1-\rho))\Big).$$ 
In the third part below, we prove that there exists a positive constant $\tilde{k}_2$, such that $$\tilde{k}_2\Big(\#(\mathcal{B}^l\cap \p^{\f}\mathcal{D})+\sum_{g\in \mathcal{B}^h\cap \p^{\out}\mathcal{D}}(V(x_{g})-V(c_1-\rho))\Big) \geq W_{\tilde{\mathcal{D}}\backslash \mathcal{D}^{\inn}}(\tilde x^\mathcal{D})- W_{\tilde{\mathcal{D}}\backslash \mathcal{D}^{\inn}}(x).$$

\noindent
{\it Part 1:}\\
Part (b) of proposition \ref{V} implies that $V(c_1-b^{-1}(x_g-c_0))\leq V(x_g)$ for all $g\in \mathcal{B}^{[c_0,c_0+\rho b]}$. Moreover, since $V(c_1-\rho)\leq V(c_0+b\rho)$, it follows from parts (a) and (c) of the same proposition that $V(x_g)\geq V(c_1-\rho)$ for all $g \in \mathcal{B}^{[c_0+\rho b,c_1-\rho)}$. Together this gives
\begin{equation}\label{p1}V(x_g)\geq V(\tilde x^\mathcal{D}_g)\text{ for all } g\in G. \end{equation}

Next, we prove that for all $g\in \mathcal{D}^{\inn}$, 
\begin{equation}\label{g1}|\nabla_g(x)|^2\geq |\nabla_g(\tilde x^\mathcal{D})|^2. \end{equation}
Indeed, it holds for all $g\in \mathcal{D}^{\inn}$ and $\tilde g\in \mathcal{B}_1(g)$ that $\tilde g\in \mathcal{D}$, and it is enough to show that for all pairs $(g,\tilde g)\in \mathcal{D}\times \mathcal{D}$ with $d(g,\tilde g)=1$, $(x_{\tilde g}-x_g)^2\geq (\tilde x^\mathcal{D}_{\tilde g}-\tilde x^\mathcal{D}_g)^2$. To show this, we do a case study. 
\begin{itemize}
\item First, observe that this inequality is obviously satisfied in the case where $g,\tilde g \in (\mathcal{B}^{[c_0+\rho b,c_1-\rho)}\cup \mathcal{B}^{[c_1-\rho,c_1]})\cap \mathcal{D}$, and since $b\geq 1$ also when $g,\tilde g \in \mathcal{B}^{[c_0,c_0+\rho b)}\cap \mathcal{D}$. 
\item In case that $g\in \mathcal{B}^{[c_0,c_0+\rho b)}\cap \mathcal{D}$ and $\tilde g \in \mathcal{B}^{[c_0+\rho b,c_1-\rho)}\cap \mathcal{D}$, the definition of $\tilde x^\mathcal{D}$ gives us $b^{-1}(x_g-c_0)=c_1-\tilde{x}^\mathcal{D}_g$ and $c_1-b^{-1}(x_{\tilde g}-c_0)\geq c_1-\rho=\tilde{x}^\mathcal{D}_{\tilde g}$, so $$(x_{\tilde g}-x_g)\geq b^{-1}(x_{\tilde g}-x_g)\geq (c_1-b^{-1}(x_{\tilde g}-c_0)-c_1+b^{-1}(x_{g}-c_0)) \geq(\tilde{x}^\mathcal{D}_{\tilde g}-\tilde{x}^\mathcal{D}_{g})$$ and since all the terms in these inequalities are positive, the statement follows. 
\item Next, we consider the case where $g\in \mathcal{B}^{[c_0,c_0+\rho b)}\cap \mathcal{D}$ and $\tilde g \in \mathcal{B}^{[c_1-\rho ,c_1]}\cap \mathcal{D}$, which holds since $x_{\tilde g}-x_g\geq (c_1-c_0-\rho(b+1))$ while $(\tilde{x}^\mathcal{D}_{\tilde g}-\tilde{x}^\mathcal{D}_{g})\leq \rho$ and $\rho_0$ may be assumed smaller if necessary. 
\item Finally, we consider the case where $g\in \mathcal{B}^{[c_0+\rho b, c_1-\rho)}\cap \mathcal{D}$ and $\tilde g \in \mathcal{B}^{[c_1-\rho ,c_1]}\cap \mathcal{D}$, where $x_{\tilde g}=\tilde{x}^\mathcal{D}_{\tilde g}$ and $x_g\leq c_1-\rho$, which gives $(x_g-x_{\tilde g})\geq (\tilde{x}^\mathcal{D}_{g}-\tilde{x}^\mathcal{D}_{\tilde g})\geq 0$.
\end{itemize}
This proves (\ref{g1}), which combined with (\ref{p1}) implies (\ref{Eq1}).\\

\noindent
{\it Part 2:}\\
To prove (\ref{Eq2}), let $g\in \p^{\out}\mathcal{B}^{[c_0,c_1-2b\rho)}\cap (\mathcal{D}^{\inn})^{\inn}$. Then one of the following two cases are satisfied:
\begin{itemize}
\item There exists a $\tilde g\in \mathcal{B}_1(g)$ with $\tilde g\in \mathcal{B}^{[c_0+2b\rho,c_1-2b\rho)}\cap \mathcal{D}^{\inn}$, which by proposition \ref{V} (c) implies that $V(x_{\tilde g})-V(\tilde x^{\mathcal{D}}_{\tilde g})\geq \beta>0$.
\item There exists a $\tilde g\in \mathcal{B}_1(g)$ with $\tilde g\in \mathcal{B}^{[c_0,c_0+2b\rho)}\cap \mathcal{D}^{\inn}$ while $ g\in \mathcal{B}^{[c_1-2b\rho,c_1]}\cap \mathcal{D}$. It then follows that $(x_{ g}-x_{\tilde g})\geq (c_1-c_0-4b\rho)$ and $|\tilde x^{\mathcal{D}}_{ g}-\tilde x^{\mathcal{D}}_{\tilde g}|\leq \rho$, since for all $g\in D$, $\tilde x^{\mathcal{D}}_{g}\in [c_1-\rho, c_1]$. 
\end{itemize}
This implies together with (\ref{p1}), (\ref{g1}) and the considerations above, that for every $g\in \p^{\out}\mathcal{B}^{[c_0,c_1-2b\rho)}\cap (\mathcal{D}^{\inn})^{\inn}$ there exists a $\tilde g\in \mathcal{D}^{\inn}$, such that $(1/4(|\nabla_{\tilde g}(x)|^2-|\nabla_{\tilde g}(\tilde x^\mathcal{D})|^2)+V(x_{\tilde g})-V(\tilde x^\mathcal{D}_{\tilde g})\geq S^{-1} \cdot \tilde{k}_1$ for $\tilde{k}_1:=\min\{1,\beta, ((c_1-c_0-4b\rho)^2-\rho^2)/4\}>0$. Taking into account possible multiplicities of the points $g$ for a specific point $\tilde g$ gives us the estimate.\\

\noindent
{\it Part 3:}\\
Let $g\in \tilde{\mathcal{D}}\backslash \mathcal{D}^{\inn}$. By (\ref{p1}) $$\sum_{g\in \tilde{\mathcal{D}}\backslash \mathcal{D}^{\inn}}\frac{1}{4}\left(|\nabla_g(\tilde x^\mathcal{D})|^2- |\nabla_g(x)|^2\right)\geq W_{ \tilde{\mathcal{D}}\backslash \mathcal{D}^{\inn}}(\tilde x^\mathcal{D})- W_{ \tilde{\mathcal{D}}\backslash \mathcal{D}^{\inn}}(x)$$ and by part 1, for all $(g,\tilde g)\in \mathcal{D}\times \mathcal{D}$ with $d(g,\tilde g)=1$, $(\tilde x^\mathcal{D}_g-\tilde x^\mathcal{D}_{\tilde g})^2\leq (x_g-x_{\tilde g})^2$ holds. So we only need to consider $(g,\tilde g)\in \p^{\inn}\mathcal{D}\times \p^{\out}\mathcal{D} \cup \p^{\out}\mathcal{D}\times \p^{\inn}\mathcal{D}$, which after reorganising the pairs gives $$\sum_{g\in \p^{\inn}\mathcal{D}}\sum_{\tilde g \in \p^{\out}\mathcal{D}\cap \mathcal{B}_1(g)}\left((\tilde x^\mathcal{D}_g-\tilde x^\mathcal{D}_{\tilde g})^2- (x_g-x_{\tilde g})^2\right)\geq\sum_{g\in \tilde{\mathcal{D}}\backslash \mathcal{D}^{\inn}}\frac{1}{4}\left(|\nabla_g(\tilde x^\mathcal{D})|^2- |\nabla_g(x)|^2\right),$$ where  $\tilde x^\mathcal{D}_{\tilde g}=x_{\tilde g}$. 
We consider first the case where for $(g,\tilde g)\in \p^{\inn} \mathcal{D}\times \p^{\out}\mathcal{D}$ either $g\in \mathcal{B}^l$ or $\tilde g\in \mathcal{B}^l$. If $g\in \mathcal{B}^l$, we use the simple estimate  \begin{equation}\label{p2}\sum_{\tilde g \in \mathcal{B}_1(g)}\left((\tilde x^{\mathcal{D}}_{\tilde g}-\tilde x^{\mathcal{D}}_g)^2-(x_{\tilde g}-x_g)^2\right)\leq \sum_{\tilde g \in \mathcal{B}_1(g)}( x_{\tilde g}-\tilde x^{\mathcal{D}}_g)^2 \leq S(c_1-c_0)^2\end{equation}  and similarly, if $\tilde g \in \mathcal{B}^l\cap \p^{\out}\mathcal{D}$, we use the same estimate with the roles of $\tilde g$ and $g$ reversed.  
We are left with the case where $g\in \mathcal{B}^h\cap \p^{\inn}\mathcal{D}$, and it holds that either $\tilde g\in \mathcal{B}^{[c_1-\rho,c_1]}$, which we can forget, since then $$(\tilde x^{\mathcal{D}}_{\tilde g}-\tilde x^{\mathcal{D}}_{g})^2=(x_{\tilde g}-(c_1-\rho))^2\leq (x_{\tilde g}-x_g)^2, $$ or $\tilde g\in \mathcal{B}^h\cap \p^{\out}\mathcal{D}$. For this second case we observe that $$(\tilde x^{\mathcal{D}}_{g}-\tilde x^{\mathcal{D}}_{\tilde g})^2-(x_{\tilde g}-x_g)^2\leq (\tilde x^{\mathcal{D}}_{g}-\tilde x^{\mathcal{D}}_{\tilde g})^2=(c_1-\rho-x_{\tilde g})^2\leq ((c_1-x_{\tilde g})^2-\rho^2),$$ which we may by condition (a) from proposition \ref{V} estimate by $$(c_1-x_{\tilde g})^2-\rho^2=2 \int_{x_{\tilde g}}^{c_1-\rho}(c_1-y) dy\leq 2m_1^{-1}\int_{x_{\tilde g}}^{c_1-\rho}(-V'(y))dy=2m_1^{-1}(V(x_{\tilde g})-V(c_1-\rho)).$$
Together with (\ref{p2}) this gives with $\tilde{k}_2:=\max\{S(c_1-c_0)^2, 2Sm_1^{-1}\}$ the inequality $$\tilde{k}_2\Big(\#(\mathcal{B}^l\cap \p^{\f}\mathcal{D})+\sum_{g\in \mathcal{B}^h\cap \p^{\out}\mathcal{D}}(V(x_{g})-V(c_1-\rho))\Big) \geq W_{\tilde{\mathcal{D}}\backslash \mathcal{D}^{\inn}}(\tilde x^\mathcal{D})- W_{\tilde{\mathcal{D}}\backslash \mathcal{D}^{\inn}}(x).$$
\end{proof}

Note that for $\rho\to0$, it follows since the absolute minima $c_0$ and $c_1$ are non-degenerate that then also $\beta\to0$, where $\beta$ is as in proposition \ref{V}. This in turn implies that the constant $\tilde{k}_1$ from the proof above converges to zero with $\rho$, and so $k_0\to \infty$ for $\rho\to 0$.

We call a set of vertices $\mathcal{D}\subset G$ connected, if there exists a corresponding connected subset $\tilde{\mathcal{D}}\subset \Gamma$ such that $\mathcal{D}\subset \tilde{\mathcal{D}}$ and such that $\tilde{\mathcal{D}}\cap G=\mathcal{D}$. (We view edges as open intervals and adding the adjoining vertices corresponds to closing the intervals.)

\begin{lemma}\label{connected_components}Let $x$ be a minimiser on a finite set $\mathcal{B}$ satisfying the boundary conditions given by a function $f:G\backslash\mathcal{B} \to [c_0,c_1]$ and let the sets $\mathcal{B}^l,\mathcal{B}^h$ be as in lemma \ref{ts_estimate}. Then for every connected component $\mathcal{D}$ of $\mathcal{B}^l$, $$\mathcal{D} \cap \p^{\out} \mathcal{B}\neq \varnothing.$$
\end{lemma}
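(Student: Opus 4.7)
\noindent The plan is to argue by contradiction: suppose some connected component $\mathcal{D}$ of $\mathcal{B}^l$ is trapped inside $\mathcal{B}$, i.e.\ $\mathcal{D}^{\out}\subset\mathcal{B}$, so that no vertex of $\mathcal{D}$ is adjacent to a vertex outside $\mathcal{B}$. Since $\mathcal{D}$ is a connected component of $\mathcal{B}^l$, one has $\partial^{\out}\mathcal{D}\cap\mathcal{B}^l=\varnothing$, and combined with the trapping this forces $\partial^{\out}\mathcal{D}\subset\mathcal{B}\setminus\mathcal{B}^l=\mathcal{B}^{[c_1-2b\rho,c_1]}$, so $x_g\geq c_1-2b\rho$ on every $g\in\partial^{\out}\mathcal{D}$. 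The degenerate subcase $\mathcal{D}=\mathcal{B}$ is eliminated at once, since then $\mathcal{D}^{\out}\supset\partial^{\out}\mathcal{B}\neq\varnothing$ because $\mathcal{B}$ is a finite subset of the infinite connected graph $\Gamma$.

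\noindent Next I would let $\bar x$ be a minimiser on $\mathcal{D}$ with boundary values inherited from $x$, extended by $\bar x\equiv x$ off $\mathcal{D}$; existence is provided by lemma \ref{existence}. Local minimality gives $W_{\mathcal{B}}(\bar x)\leq W_{\mathcal{B}}(x)$, and the minimality of $x$ on $\mathcal{B}$ forces equality, making $\bar x$ a global minimiser on $\mathcal{B}$ too. Because $\bar x\equiv x$ on $(\mathcal{B}^{\out})^{\out}\setminus\mathcal{B}$, lemma \ref{comparison} applied in both inequality directions forces $\bar x\equiv x$ throughout $\mathcal{B}$, so $x|_{\mathcal{D}}$ is itself the Dirichlet minimiser on $\mathcal{D}$ with boundary values $\geq c_1-2b\rho$ everywhere on $\partial^{\out}\mathcal{D}$.

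\noindent To derive the contradiction I would produce a strictly energy-decreasing variation of $x$ supported in $\mathcal{D}$. The natural candidate is the reflection variation $\tilde x^{\mathcal{D}}$ built in the proof of lemma \ref{ts_estimate}, which lifts $x|_{\mathcal{D}}\in[c_0,c_1-2b\rho)$ into $[c_1-\rho,c_1]$: part 1 of that proof furnishes the pointwise inequality $V(\tilde x^{\mathcal{D}}_g)\leq V(x_g)$ together with the non-increase of intra-$\mathcal{D}$ gradient terms, while part 3 bounds the cross-$\partial^{\f}\mathcal{D}$ gradient excess by $\tilde k_2\bigl(\#\partial^{\inn}\mathcal{D}+P_{\mathcal{B}^h\cap\partial^{\out}\mathcal{D}}(x,c_1-\rho)\bigr)$. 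The main technical obstacle is extracting the strict net decrease: under the trapping hypothesis the part 2 terms of lemma \ref{ts_estimate} vanish because $\mathcal{D}^{\inn}\subset\mathcal{D}\subset\mathcal{B}^l$, so strict improvement must instead be pieced together from the pointwise drops $V(x_g)-V(\tilde x^{\mathcal{D}}_g)>0$ at vertices where $x_g$ is bounded away from both $c_0$ and $c_1$ (via the non-degeneracy parts (b)--(d) of proposition \ref{V}), together with the strict subsolution property $V'(c_1-2b\rho)<0$ from proposition \ref{V}(a), which exactly rules out the possibility that $x|_{\mathcal{D}}$ stays below $c_1-2b\rho$ as a discrete-PDE solution with boundary data $\geq c_1-2b\rho$.
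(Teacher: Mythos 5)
Your setup coincides with the paper's: assume the component $\mathcal{D}$ is trapped, deduce $x_g\geq c_1-2b\rho$ on $\p^{\out}\mathcal{D}$, and contradict minimality with a variation supported in $\mathcal{D}$. (The intermediate step of re-minimising on $\mathcal{D}$ and invoking lemma \ref{comparison} is harmless but superfluous: $x$ is already a minimiser on $\mathcal{B}\supset\mathcal{D}$, so it is automatically minimal against variations supported in $\mathcal{D}$.) The genuine gap is in the final step, and you have correctly located it yourself: the variation $\tilde x^{\mathcal{D}}$ from lemma \ref{ts_estimate} only yields weak inequalities term by term, and neither of the two mechanisms you propose for extracting strictness closes the argument. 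First, the pointwise drops $V(x_g)-V(\tilde x^{\mathcal{D}}_g)>0$ require $x_g$ to lie in the middle range $[c_0+2b\rho,c_1-2b\rho]$ (proposition \ref{V}(c)); if $\mathcal{D}$ lies entirely in $\mathcal{B}^{[c_0,c_0+2b\rho)}$, proposition \ref{V}(b) gives only $V(x_g)\geq V(\tilde x^{\mathcal{D}}_g)$, possibly with equality (e.g.\ for a symmetric well). Second, the ``strict subsolution'' comparison with the constant $c_1-2b\rho$ is not available: that constant is neither a solution nor a minimiser, so lemma \ref{comparison} does not apply to it, and a maximum-principle argument fails because $V'$ changes sign on $[c_0,c_1-2b\rho]$ (non-minimal ``bubble'' solutions dipping well below $c_1-2b\rho$ while having boundary data above it are not excluded by the equation alone). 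The strictness you need in the remaining case actually comes from the cross-boundary gradient terms, which under the trapping hypothesis drop from order $(c_1-c_0)^2$ to order $(b\rho)^2$; you mention these terms only as an ``excess'' to be bounded, not as the source of the strict decrease, so the case analysis is not completed.

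The paper avoids this case analysis altogether by using a different variation: $\tilde x_g:=\min\{c_1,\lambda((c_1-2b\rho)-x_g)+(c_1-2b\rho)\}$ on $\mathcal{D}$, a reflection of $x$ about the level $c_1-2b\rho$ contracted by the factor $\lambda=\frac{2b\rho}{c_1-c_0+\tilde\rho+2b\rho}<1$. Proposition \ref{V}(d) (which your proposal never uses in the way it is designed for) guarantees $V(\tilde x_g)\leq V(x_g)$ pointwise, while the strict contraction $\lambda<1$ makes every nonzero gradient term strictly decrease, including the edges into $\p^{\out}\mathcal{D}$ where the boundary values lie in $[c_1-2b\rho,c_1]$. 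Since $\tilde x\neq x$ on the nonempty set $\mathcal{D}\subset\mathcal{B}^l$, this gives $W_{\mathcal{D}^{\out}}(\tilde x)<W_{\mathcal{D}^{\out}}(x)$ in one stroke. To repair your write-up, either adopt this contraction--reflection variation, or complete your own argument by treating the case $\mathcal{D}\subset\mathcal{B}^{[c_0,c_0+2b\rho)}$ via the strict collapse of the cross-boundary gradients rather than via a comparison with the constant $c_1-2b\rho$.
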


\begin{proof}
Assume that there is a connected component $\mathcal{D}\subset \mathcal{B}^{l}$ which does not intersect the boundary $\p^{\out} \mathcal{B}$. Then $x_g\in [c_1-2b\rho,c_1]$ on $\p^{\out}\mathcal{D}$.

Recall the definition of $\tilde \rho$ from proposition \ref{V} and 
define a variation $\tilde x$ of $x$ on $\mathcal{D}^0$ by $$\tilde x_g:=\min\Big\{c_1, \frac{2b\rho}{(c_1-c_0+\tilde \rho+2b\rho)}((c_1-2b\rho)-x_g)+(c_1-2b\rho)\Big\}.$$ Intuitively, this is a reflexion of $x$ over $c_1-2b\rho$ with a contracting factor $\frac{2b\rho}{(c_1-c_0+\tilde \rho+2b\rho)}<1$ (by part (d) of proposition \ref{V}).

It holds that $\tilde x_g\in [c_1-2b\rho,c_1]$ for all $g\in \mathcal{D}$. More precisely, it holds that $\tilde x_g\in [c_1-2b\rho,c_1)$ for all $g\in \mathcal{D}^{(c_0+\tilde \rho, c_1-2b\rho]}$ and $\tilde x_g= c_1$ for all $g\in \mathcal{D}^{[c_0,c_0+\tilde \rho]}$. It follows by part (d) of proposition \ref{V} that $V(x_g)\geq V(\tilde x_g)$ for all $g$ and, since $\frac{2b\rho}{(c_1-c_0+\tilde \rho+2b\rho)}<1$, it also holds that $|\nabla_g(x)|^2>|\nabla_g(\tilde x)|^2$ whenever $\nabla_g(x)\neq \nabla_g(\tilde x)$. This implies that $W_{\mathcal{D}^{\out}}(x)> W_{\mathcal{D}^{\out}}(\tilde x)$ whenever $\tilde x\neq x$. By the definition of $\mathcal{B}^l$ this is indeed the case, which gives us a contradiction.
\end{proof}

\section{Controlling the asymptotics}\label{dp}

\subsection{Main lemma}

Let $b,\rho_0$ be as in proposition \ref{V} and fix a constant $\rho \in (0,\rho_0]$, which gives the notion of sets $\mathcal{B}_N^h, \mathcal{B}_N^l$, defined as in lemma \ref{ts_estimate} for balls $\mathcal{B}_N$. In this section we prove the main technical result about how these sets behave when $N$ goes to infinity. 

\begin{definition}\label{rini}

Let us recall the definition of the growth rate $D$ from (\ref{gob}) and define for $r>0$ and $\xi_0\in \p \Gamma$ the following objects:

\begin{itemize}

\item 
The sequences $r_i\to r$ and $d_i\to 0$ for $i\geq 1$ by \begin{equation}\label{ri}r_i:=\frac{6 r}{\pi^2}\sum_{j=1}^i\frac{1}{j^2}\ \text{ and } \ d_i:=r_{i+1}-r_i=\frac{6 r}{\pi^2 (i+1)^2} \ .\end{equation}  

\item 
For a given natural number $n_1$ the increasing sequence of real numbers \begin{equation}\label{ni}n_{i+1}:=\left(\frac{D+1/2}{D+1/4}\right)n_i=\left(\frac{D+1/2}{D+1/4}\right)^{i-1}n_1\ .\end{equation} 

\item 
For $r_i$ and $n_i$ as above \begin{equation}\label{vi}\mathcal{V}_i:=(\mathcal{C}_{B^\varepsilon_{r_{i+1}}(\xi_0)}\backslash \mathcal{C}_{B^\varepsilon_{r_i}(\xi_0)})\backslash \mathcal{B}_{\lfloor n_i\rfloor} \ .\end{equation} where $\lfloor \cdot \rfloor$ denotes the floor function. 
\end{itemize}

\end{definition}

\begin{remark}\label{vi_structure}
Let $r_i$, $n_i$ and $\mathcal{V}_i$ be as in the definition above. Recall the definition of the constant $t_n$ from lemma \ref{separating_sets} and rewrite it for every $n\in \N$ as  \begin{equation}\label{konstants}t_n= 4k_1^{-1} e^{-\varepsilon n} \ \text{ where } \ k_1:=(4\min\{C_2,4\varepsilon^{-1}e^{4\varepsilon}\})^{-1}.\end{equation} Moreover, recall from lemma \ref{separating_sets} the sets $\mathcal{A}_{r,t_n}$ given by $A_{r+2t_n}^{t_n}(\xi_0)\subset\p \Gamma$, which ``separate'' $\mathcal{C}_{B^\varepsilon_r(\xi_0)}$ and $\mathcal{C}_{\Gamma\backslash B^\varepsilon_{r+4t_n}(\xi_0)}$ outside $\mathcal{B}_n$. Using definitions (\ref{vi}) and (\ref{konstants}) we may, for every $i\in \N$, write the set $\mathcal{V}_i$ as a disjoint union $\mathcal{V}_i=\bigsqcup_{j\in I_i}\tilde{\mathcal{A}}_{i_j}$, where the sets $\tilde{\mathcal{A}}_{i_j}$ satisfy $\mathcal{A}_{r_{i_j},t_{\lfloor n_i\rfloor}} \subset \tilde{\mathcal{A}}_{i_j}$. It follows from definitions of $r_i$ and $t_n$ that \begin{equation}\label{eq1}\# I_i\geq  \left\lfloor \frac{k_1d_i}{ e^{-\varepsilon \lfloor n_i\rfloor}}\right\rfloor-1 \geq k_1 d_i e^{-\varepsilon}e^{\varepsilon n_i}-2.\end{equation} 
\end{remark}

The size of $I_i$ measures the number of disjoint separating sets outsize a ball of radius $\lfloor n_i\rfloor$ that are contained in $\mathcal{V}_i$. Note the exponential growth of $\# I_i$ with $n_i$. 

\begin{lemma}[Main lemma]\label{main_lemma}
Let $r>0$ and $\xi_0\in \p \Gamma$ be given and let $C_D$, $C_0$, $k_0$ and $k_1$  come from (\ref{gob}), the isoperimetric inequality (\ref{IP}), lemma \ref{ts_estimate} and (\ref{konstants}), respectively. Define $k_2$ by \begin{equation}\label{k_2}k_2:=\max\{C_D,\left(6 k_0 C_D C_0 \right)^{(\frac{4D+1}{4D})}\}. \end{equation} 
Let furthermore $r_i$, $n_i$ and $\mathcal{V}_i$ be as in definition \ref{rini}, where $n_1$ is a real number satisfying
\begin{equation}\label{n1}n_1\geq k_3:=\max\left\{\ 4 (4D+1)\varepsilon^{-1}\ , \ \frac{2}{\varepsilon}\log\left(\frac{(k_2+2C_D)4\pi^2}{6r C_Dk_1e^{-\varepsilon}}\right)\right\}.\end{equation}

Then, whenever there exist for a minimiser $x^N$ on $\mathcal{B}_N$ an $n_i\leq N$ satisfying \begin{equation}\label{ih2}\#(\mathcal{C}_{B^\varepsilon_{r_i}(\xi_0)}\cap \mathcal{B}_N^{l})+ P_{\mathcal{C}_{B^\varepsilon_{r_i}(\xi_0)}\cap \mathcal{B}_N^{h}}(x^N,c_1-\rho)\geq k_2e^{\varepsilon (D+\frac{1}{4}) n_i},\end{equation} 
 it follows for all integers $\iota\geq i$ with $n_\iota<N$ that $$\#(\mathcal{C}_{B^\varepsilon_{r_\iota}(\xi_0)}\cap \mathcal{B}_N^{l})+ P_{\mathcal{C}_{B^\varepsilon_{r_\iota}(\xi_0)}\cap \mathcal{B}_N^{h}}(x^N,c_1-\rho)\geq k_2e^{\varepsilon (D+\frac{1}{4}) n_\iota}.$$
\end{lemma}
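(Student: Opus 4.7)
I would prove the statement by induction on $\iota$, with the base case $\iota = i$ supplied by (\ref{ih2}). For the inductive step, set
\[
Q_\iota \ := \ \#(\mathcal{C}_{B^\varepsilon_{r_\iota}(\xi_0)}\cap \mathcal{B}_N^{l})+ P_{\mathcal{C}_{B^\varepsilon_{r_\iota}(\xi_0)}\cap \mathcal{B}_N^{h}}(x^N,c_1-\rho),
\]
assume the inductive hypothesis $Q_\iota \geq k_2 e^{\varepsilon(D+1/4)n_\iota}$, and argue by contradiction: suppose $Q_{\iota+1} < k_2 e^{\varepsilon(D+1/4)n_{\iota+1}}$, which by the recursion (\ref{ni}) is the same as $Q_{\iota+1} < k_2 e^{\varepsilon(D+1/2)n_\iota}$.

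\textbf{Pigeonhole and ts\_estimate on a thickened cone.} Decompose $\mathcal{V}_\iota = \bigsqcup_{j\in I_\iota}\tilde{\mathcal{A}}_{i_j}$ as in Remark \ref{vi_structure}, with $\#I_\iota \geq k_1 d_\iota e^{-\varepsilon}e^{\varepsilon n_\iota}-2$. Since $\mathcal{V}_\iota \subset \mathcal{C}_{B^\varepsilon_{r_{\iota+1}}(\xi_0)}$ carries mass at most $Q_{\iota+1}$, a pigeonhole yields a $j^*\in I_\iota$ with
\[
\#(\tilde{\mathcal{A}}_{i_{j^*}}\cap \mathcal{B}_N^l)+P_{\tilde{\mathcal{A}}_{i_{j^*}}\cap \mathcal{B}_N^h}(x^N,c_1-\rho)\ \leq\ \frac{Q_{\iota+1}}{\#I_\iota};
\]
the threshold (\ref{n1}) on $n_1$ is designed exactly so that this quantity is strictly smaller than $C_D e^{\varepsilon D n_\iota}$. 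Choose $\mathcal{D} := \bigl(\mathcal{C}_{B^\varepsilon_{r^*}(\xi_0)}\bigr)^{(3\out)}$, where $r^*$ is the centre radius of the chosen annular piece $\tilde{\mathcal{A}}_{i_{j^*}}$. Lemma \ref{separating_sets} then gives $\p^{\f}\mathcal{D} \subset \tilde{\mathcal{A}}_{i_{j^*}} \cup \mathcal{B}_{\lfloor n_\iota\rfloor}$, while by construction $\mathcal{D}^{(3\inn)} \supset \mathcal{C}_{B^\varepsilon_{r_\iota}(\xi_0)}$. Applying Lemma \ref{ts_estimate} to this $\mathcal{D}$ and bounding $\#\mathcal{B}_{\lfloor n_\iota\rfloor}$ via (\ref{gob}), the right-hand side is controlled by
\[
k_0\Bigl(\tfrac{Q_{\iota+1}}{\#I_\iota} + \#\mathcal{B}_{\lfloor n_\iota\rfloor}\Bigr)\ \leq\ 2 k_0 C_D e^{\varepsilon D n_\iota}.
\]

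\textbf{Isoperimetric lower bound and contradiction.} By the inductive hypothesis,
\[
\#(\mathcal{B}_N^l \cap \mathcal{D}^{(3\inn)}) + P_{\mathcal{B}_N^h \cap \mathcal{D}^{\inn}}(x^N,c_1-\rho)\ \geq\ Q_\iota\ \geq\ k_2 e^{\varepsilon(D+1/4)n_\iota}.
\]
I split into two cases. If the $P$-term carries at least half of this mass, it appears directly on the left-hand side of Lemma \ref{ts_estimate} and already exceeds $2 k_0 C_D e^{\varepsilon D n_\iota}$ once $n_1 \geq k_3$. Otherwise, $\#(\mathcal{B}_N^l \cap \mathcal{D}^{(3\inn)}) \geq Q_\iota/2$; applying the isoperimetric inequality (\ref{IP}) to $\mathcal{B}_N^l \cap \mathcal{D}^{(3\inn)}$ and decomposing its outer boundary via (\ref{boundary}) as $(\p^{\out}\mathcal{B}_N^l \cap \mathcal{D}^{(2\inn)}) \cup ((\mathcal{B}_N^l)^{\out}\cap \p^{\out}\mathcal{D}^{(3\inn)})$ yields
\[
\#(\p^{\out}\mathcal{B}_N^l \cap \mathcal{D}^{(2\inn)})\ \geq\ C_0^{-1}\bigl(Q_\iota/2\bigr)^{\frac{4D}{4D+1}} - O(\# \p^{\f}\mathcal{D}),
\]
and the error $\#\p^{\f}\mathcal{D}$ is bounded as in the previous paragraph. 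A direct calculation using the arithmetic identity $(e^{\varepsilon(D+1/4)n_\iota})^{4D/(4D+1)} = e^{\varepsilon D n_\iota}$ together with $k_2^{4D/(4D+1)} \geq 6 k_0 C_D C_0$ from (\ref{k_2}) shows this lower bound beats $2 k_0 C_D e^{\varepsilon D n_\iota}$, contradicting the ts\_estimate upper bound.

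\textbf{Main obstacle.} The delicate part is the bookkeeping of constants and exponents. The exponents $D+1/4$ and $D+1/2$ in (\ref{ni}) are tuned precisely so that the isoperimetric gain of $4D/(4D+1)$ sends $e^{\varepsilon(D+1/4)n_\iota}$ back to $e^{\varepsilon D n_\iota}$—exactly the ball-growth rate from (\ref{gob})—while the one-step doubling from $n_\iota$ to $n_{\iota+1}$ is what absorbs the $e^{\varepsilon n_\iota}$ factor gained from the pigeonhole over separating annuli. Verifying that the threshold (\ref{n1}) on $n_1$ dominates every accumulated constant (including the polynomial $(\iota+1)^{-2}$ factor hidden in $d_\iota$ and the valence-dependent factors coming from the $\p^{\f}\mathcal{D}$-boundary estimates) is the most tedious point. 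The structural key is the specific choice of $\mathcal{D}$ as the $3$-outer thickening of an intermediate cone, since this is what simultaneously makes $\p^{\f}\mathcal{D}$ fit inside a single separating annulus (Lemma \ref{separating_sets}), keeps the inner cone $\mathcal{C}_{B^\varepsilon_{r_\iota}(\xi_0)}$ inside $\mathcal{D}^{(3\inn)}$, and renders the $\mathcal{D}^{(2\inn)}$-restricted boundary data compatible with the isoperimetric inequality.
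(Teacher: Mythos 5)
Your proposal is the contrapositive of the paper's argument: the paper proves directly that \emph{every} one of the $\#I_\iota\sim e^{\varepsilon n_\iota}$ disjoint separating annuli $\tilde{\mathcal{A}}_{i_j}$ must carry mass at least $C_De^{\varepsilon Dn_\iota}$ (by applying (\ref{IP}) to $\mathcal{C}_{B^\varepsilon_{r_{i_j}}(\xi_0)}\cap\mathcal{B}_N^l$, splitting the boundary via (\ref{boundary}), and absorbing the cross term $\#((\mathcal{C}_{B^\varepsilon_{r_{i_j}}(\xi_0)})^{\out}\cap\p^{\out}\mathcal{B}_N^l)$ into a quantity $(*)$ that is bounded \emph{above} by lemma \ref{ts_estimate}), and then simply sums over $j\in I_\iota$ to get $Q_{\iota+1}\geq \#I_\iota\cdot C_De^{\varepsilon Dn_\iota}\geq k_2e^{\varepsilon(D+1/4)n_{\iota+1}}$; you instead assume $Q_{\iota+1}$ is small, pigeonhole to find one thin annulus, and derive a contradiction from lemma \ref{ts_estimate} applied across that single annulus. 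The two arguments use identical ingredients (the same choice $\mathcal{D}=(\mathcal{C}_{B^\varepsilon_{r^*}(\xi_0)})^{(3\out)}$, the same exponent arithmetic $(D+1/4)\cdot\frac{4D+1}{4D}=D+1/4+\ldots$ sending $e^{\varepsilon(D+1/4)n_\iota}$ to $e^{\varepsilon Dn_\iota}$, the same count of annuli supplying the factor $e^{\varepsilon n_\iota}$), so the routes are logically dual; the paper's direct version is slightly cleaner because it never has to \emph{subtract} an error from the isoperimetric lower bound.

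Two caveats you should address if you write this out. First, your error term $\#\bigl((\mathcal{B}_N^l)^{\out}\cap\p^{\out}\mathcal{D}^{(3\inn)}\bigr)$ involves the outer thickening of $\mathcal{B}_N^l$, whereas the pigeonhole only controls $\#(\tilde{\mathcal{A}}_{i_{j^*}}\cap\mathcal{B}_N^l)$; converting one to the other costs a valence factor $S$ and requires the fattened set to stay inside $\tilde{\mathcal{A}}_{i_{j^*}}$ (the paper avoids this by routing that cross term through $(*)$ and lemma \ref{ts_estimate} instead of through the annulus count). Second, because of that valence factor, the extra $k_0\#\mathcal{B}_{\lfloor n_\iota\rfloor}$ from the $P$-term on the right of lemma \ref{ts_estimate}, and the fact that $k_0\geq 1$ is all the paper guarantees, the margins in your final comparison ($C_0^{-1}(Q_\iota/2)^{4D/(4D+1)}-\mathrm{err}$ versus $k_0(Q_{\iota+1}/\#I_\iota+2C_De^{\varepsilon Dn_\iota})$) do not obviously close with the constants $k_2$ and $k_3$ exactly as defined in (\ref{k_2}) and (\ref{n1}); you would need to enlarge them (harmless for corollary \ref{cor}, but then you are proving a variant of the lemma rather than the stated one). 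Neither point is a conceptual flaw, but both must be checked before the induction is complete.
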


\begin{proof}
By inclusion, (\ref{ih2}) holds for all $j\in I_i$ where $I_i$ is as in remark \ref{vi_structure} the index set that gives $r_{i_j}$ and $\tilde{\mathcal{A}}_{i_j}$ and for which $\mathcal{V}_i=\bigsqcup_{j\in I_i}\tilde{\mathcal{A}}_{i_j}$. In case that one of the two terms on the left in (\ref{ih2}) is smaller than one, we may compensate that term in the constants on the right and work only with the other term from here on. Otherwise, observe that for all $0\leq \alpha\leq 1$ and $a,b\geq 1$, $$a^{\alpha}+b\geq a^\alpha+b^{\alpha}\geq (a+b)^\alpha.$$ By raising (\ref{ih2}) to the power $\frac{4D}{4D+1}$ and by (\ref{k_2}) we thus conclude that $$\Big(\#(\mathcal{C}_{B^\varepsilon_{r_i}(\xi_0)}\cap \mathcal{B}_N^{l})\Big)^{\frac{4D}{4D+1}}+ \Big(P_{\mathcal{C}_{B^\varepsilon_{r_i}(\xi_0)}\cap \mathcal{B}_N^{h}}(x^N,c_1-\rho)\Big)\geq 6k_0C_DC_0e^{\varepsilon D n_i}.$$ 
By the isoperimetric inequality (\ref{IP}) and 
since $C_0\geq 1$ it follows: $$ \#( \p^{\out} ( \mathcal{C}_{B^\varepsilon_{r_{i_j}}(\xi_0)}\cap \mathcal{B}_N^{l})+P_{\mathcal{C}_{B^\varepsilon_{r_i}(\xi_0)}\cap \mathcal{B}_N^{h}}(x^N,c_1-\rho)\geq 6k_0C_De^{\varepsilon D n_i}. $$
Since $ (\mathcal{B}_N^{l})^{\out}=\mathcal{B}_N^{l}\cup \p^{\out}\mathcal{B}_N^{l}$, the following estimates hold:
\begin{equation*}\begin{aligned} &\#( \p^{\out} ( \mathcal{C}_{B^\varepsilon_{r_{i_j}}(\xi_0)}\cap \mathcal{B}_N^{l}))\leq \\ \leq& \#( \p^{\out} \mathcal{C}_{B^\varepsilon_{r_{i_j}}(\xi_0)}\cap (\mathcal{B}_N^{l})^{\out})+ \#( (\mathcal{C}_{B^\varepsilon_{r_{i_j}}(\xi_0)})^{\out}\cap  \p^{\out}\mathcal{B}_N^{l})\\
\leq & \#( \p^{\out} \mathcal{C}_{B^\varepsilon_{r_{i_j}}(\xi_0)}\cap \mathcal{B}_N^{l})+ 2 \#( (\mathcal{C}_{B^\varepsilon_{r_{i_j}}(\xi_0)})^{\out}\cap  \p^{\out}\mathcal{B}_N^{l}).\end{aligned}\end{equation*} 
Thus $$\#( \p^{\out} \mathcal{C}_{B^\varepsilon_{r_{i_j}}(\xi_0)}\cap \mathcal{B}_N^{l})+(*)\geq 6k_0C_De^{\varepsilon D n_i},$$ where $$(*):=2 \#( (\mathcal{C}_{B^\varepsilon_{r_{i_j}}(\xi_0)})^{\out}\cap  \p^{\out}\mathcal{B}_N^{l})+P_{\mathcal{C}_{B^\varepsilon_{r_i}(\xi_0)}\cap \mathcal{B}_N^{h}}(x^N,c_1-\rho).$$ By lemma \ref{separating_sets} $\p^{\out} \mathcal{C}_{B^\varepsilon_{r_{i_j}}(\xi_0)}\subset \tilde{\mathcal{A}}_{i_j} \cup \mathcal{B}_{\lfloor n_i\rfloor}$ and thus by (\ref{gob}) 
$$\#(\tilde{\mathcal{A}}_{i_j}\cap \mathcal{B}_N^{l})+(*)\geq C_D(6k_0-1)e^{\varepsilon D n_i}.$$ 

We estimate $(*)$ from above by lemma \ref{ts_estimate} and obtain $$2k_0 \Big(\#(\mathcal{B}_N^l\cap \p^{\f}(\mathcal{C}_{B^\varepsilon_{r_{i_j}}(\xi_0)})^{(3\out)})+P_{\mathcal{B}^h\cap \p^{\out}(\mathcal{C}_{B^\varepsilon_{r_{i_j}}(\xi_0)})^{(3\out)}}(x,c_1-\rho)\Big)\geq (*).$$ By lemma \ref{separating_sets} $ \p^{\f}(\mathcal{C}_{B^\varepsilon_{r_{i_j}}(\xi_0)})^{(3\out)})\subset \tilde{\mathcal{A}}_{i_j} \cup \mathcal{B}_{\lfloor n_i\rfloor}$ and for small enough $\rho_0>0$, proposition \ref{V} gives $$P_{\mathcal{B}^h\cap \mathcal{B}_{n_i}}(x,c_1-\rho)\leq \#\mathcal{B}_{n_i}(V(c_1-2b\rho)-V(c_1-\rho))\leq \#\mathcal{B}_{n_i},$$ so by (\ref{gob}) \begin{equation}\label{eq5}(2k_0+1)\#(\tilde{\mathcal{A}}_{i_j}\cap \mathcal{B}_N^{l})+2k_0P_{\mathcal{B}^h\cap \tilde{\mathcal{A}}_{i_j}}(x,c_1-\rho) +4C_Dk_0\geq C_D(6k_0-1)e^{\varepsilon D n_i}.\end{equation} Since it is clear from the proof of lemma \ref{ts_estimate} that $k_0\geq1$, it holds $$\#(\tilde{\mathcal{A}}_{i_j}\cap \mathcal{B}_N^{l})+P_{\mathcal{B}^h\cap \tilde{\mathcal{A}}_{i_j}}(x,c_1-\rho)\geq C_De^{\varepsilon D n_i}.$$

It follows from $\mathcal{V}_i=\bigsqcup_{j\in I_i}\tilde{\mathcal{A}}_{i_j}$ and  (\ref{eq1}) that \begin{equation}\label{eq4}\Big(\#(\mathcal{V}_i\cap \mathcal{B}_N^{l})+P_{\mathcal{V}_i\cap \mathcal{B}_N^{h}}(x^N,c_1-\rho)\Big)\geq \frac{C_D 6r k_1e^{-\varepsilon}}{\pi^2(i+1)^2}e^{\varepsilon (D+1) n_i}-2C_D e^{\varepsilon D n_i}=:(\star).\end{equation} 

Rewriting the right-hand side by $$(\star)=C_D\left(\frac{6r k_1e^{-\varepsilon}} {\pi^2(i+1)^2}e^{\varepsilon \frac{n_i}{2}} -2  e^{-\varepsilon \frac{n_i}{2}} \right) e^{\varepsilon (D+\frac{1}{2}) n_{i}}$$ we use the definition of $n_{i+1}$ in (\ref{ni}) to write $(D+1/4)n_{i+1}=(D+1/2)n_i$ and obtain \begin{equation}\label{eq3}(\star)= C_D\left(\frac{6r  k_1e^{-\varepsilon}} {\pi^2(i+1)^2}e^{\varepsilon \frac{n_1}{2}\left(\frac{D+1/2}{D+1/4}\right)^{i-1}} -2 \right) e^{\varepsilon (D+\frac{1}{4}) n_{i+1}}.\end{equation}
Let $a:=(1+4D)^{-1}>0$, so that $\left(\frac{D+1/2}{D+1/4}\right)=1+a$ and estimate $$\frac{1} {(i+1)^2}e^{\varepsilon \frac{n_1}{2}(1+a)^{i-1}} \geq e^{\varepsilon \frac{n_1}{2}}\left(\frac{e^{\varepsilon \frac{n_1}{2}(i-1)a}}{(i+1)^2}\right).$$
By condition (\ref{n1}), we chose $n_1$ large enough that $$\frac{6r  C_D k_1 e^{-\varepsilon}}{\pi^2}e^{\varepsilon\frac{n_1}{2}}-2 C_D \geq k_2.$$ Observe that the function $(1+x)^{-2}e^{\beta (x-1)}$ is monotone increasing for all $x\geq 0$, whenever $\beta\geq 2$. In particular, since $n_1\geq 4 (4D+1)\varepsilon^{-1}$, it follows that $a \varepsilon n_1>4$, so for all $j\geq 1$ $$\frac{e^{\varepsilon \frac{n_1}{2}(j-1)a}}{(j+1)^2} \geq \frac{1}{4}.$$
Hence, it follows from inequality (\ref{eq4}) by (\ref{eq3}) and by condition (\ref{n1})
$$(\star)\geq C_D \left(6r\pi^{-2} k_1 e^{-\varepsilon}e^{\varepsilon \frac{n_1}{2}}\left(\frac{e^{\varepsilon \frac{n_1}{2}(i-1)\alpha}}{(i+1)^2}\right) -2 \right) e^{\varepsilon (D+\frac{1}{4}) n_{i+1}}\geq k_2 e^{\varepsilon (D+\frac{1}{4}) n_{i+1}}.$$ 
 
Since $(\mathcal{V}_i\cap \mathcal{B}_N^{l})\subset(\mathcal{C}_{B^\varepsilon_{r_{i+1}}(\xi_0)}\cap \mathcal{B}_N^{l})$ we have proved the statement of the lemma for $\iota=i+1$ and we obtain the full statement by induction.
\end{proof}

With this lemma we easily obtain the following corollary:

\begin{corollary}\label{cor}
Let $B_{r}^\varepsilon(\xi_0)\subset D_1$ be a ball at infinity, for $N\in \N$  let $x^N$ be minimisers solving the minimisation problems defined in proposition \ref{local_minimisers} and let $k_2$ and $k_3$ be as in the main lemma above. Then $$\#(\mathcal{C}_{B^\varepsilon_{r_1}(\xi_0)}\cap \mathcal{B}_N^{l}) \leq \bar n,$$ where $$\bar n:=\lceil k_2e^{\varepsilon (D+\frac{1}{2})k_3}\rceil$$ and $\lceil \cdot\rceil$ denotes the ceiling function

\end{corollary}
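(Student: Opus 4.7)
The plan is a short proof by contradiction driven almost entirely by the main lemma. Suppose, for some $N\in\mathbb{N}$, that
$$\#(\mathcal{C}_{B^\varepsilon_{r_1}(\xi_0)}\cap\mathcal{B}_N^{l})>\bar n.$$
From $\bar n\geq k_2\,e^{\varepsilon(D+1/2)k_3}\geq C_D\,e^{\varepsilon(D+1/2)k_3}$ (since $k_2\geq C_D$ by (\ref{k_2})) and the trivial bound $\#\mathcal{B}_N\leq C_D\,e^{\varepsilon D N}$ from (\ref{gob}), it follows that $N>\frac{D+1/2}{D}k_3$; in particular $n_1=k_3<N$ and $\bar n>k_2\,e^{\varepsilon(D+1/4)n_1}$, so the hypothesis (\ref{ih2}) of the main lemma is satisfied at $i=1$ and the main lemma applies.

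Consequently, for every integer $\iota\geq 1$ with $n_\iota<N$,
$$\#(\mathcal{C}_{B^\varepsilon_{r_\iota}(\xi_0)}\cap\mathcal{B}_N^{l})+P_{\mathcal{C}_{B^\varepsilon_{r_\iota}(\xi_0)}\cap\mathcal{B}_N^{h}}(x^N,c_1-\rho)\geq k_2\,e^{\varepsilon(D+1/4)n_\iota}.$$
Let $\iota^*$ be maximal with $n_{\iota^*}<N$. Then $n_{\iota^*+1}\geq N$, and by the recursion (\ref{ni}) we get $n_{\iota^*}\geq \tfrac{D+1/4}{D+1/2}N$, giving the crucial estimate
$$(D+\tfrac{1}{4})\,n_{\iota^*}\geq \frac{(D+1/4)^2}{D+1/2}\,N=DN+\frac{N}{16(D+1/2)}.$$

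On the other hand, since $V-V(c_1-\rho)$ is bounded on $[c_1-2b\rho,c_1-\rho)$ by some constant $M$ depending only on $V$ and $\rho$, the left-hand side of the main-lemma inequality is at most $(1+M)\#\mathcal{B}_N\leq (1+M)C_D\,e^{\varepsilon DN}$. Combining this with the lower bound above forces
$$k_2\,e^{\varepsilon N/[16(D+1/2)]}\leq (1+M)C_D,$$
which is impossible under the explicit choice $k_2=(6k_0C_DC_0)^{(4D+1)/(4D)}\geq 6C_D$ of (\ref{k_2}), provided $\rho_0$ is fixed small enough that $M\leq 5$ (which one can arrange by the non-degeneracy of $c_1$, making $M=O(\rho^2)$). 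This is the contradiction we sought.

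The main obstacle is the final quantitative check: one must confirm that $k_2$ and $k_3$ as defined are genuinely large enough to trigger the contradiction at the moment $\#\mathcal{B}_N$ first exceeds $\bar n$, leaving no gap in $N$. This rests on the strict inequality $(D+1/4)^2/(D+1/2)>D$ built into the recursion (\ref{ni}), which ensures that the exponential lower bound produced by the iterated main lemma grows strictly faster than the ambient ball growth $e^{\varepsilon DN}$; the margin $N/[16(D+1/2)]$ in the exponent is exactly what the geometric ratio in (\ref{ni}) was designed to supply.
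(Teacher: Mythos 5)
Your argument is correct and follows the paper's own strategy: assume the bound fails, observe that the failure supplies hypothesis (\ref{ih2}) of lemma \ref{main_lemma} at $i=1$, iterate, and contradict the growth bound (\ref{gob}). The only difference is in how the iteration is terminated at $N$: the paper tunes $n_1\in[k_3,\tfrac{D+1/2}{D+1/4}k_3]$ so that some $n_i$ equals $N$ exactly (yielding a surplus factor $e^{\varepsilon N/4}$), whereas you fix $n_1=k_3$ and stop at the maximal $\iota^*$ with $n_{\iota^*}<N$, obtaining the smaller but still sufficient surplus $e^{\varepsilon N/(16(D+1/2))}$; both close the contradiction, and your version has the minor advantage of staying strictly within the stated hypothesis $n_\iota<N$ of the main lemma.
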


\begin{proof}
Assume that the corollary is not true and let $N\in \N$ be any fixed number larger than $k_3$. It is easy to see that there exists a real number $$n_1\in \Big[k_3,\Big(\frac{D+1/2}{D+1/4}\Big)k_3\Big],$$ such that $n_i=N$ for some $i\geq 1$. It then follows 
$$\#(\mathcal{C}_{B^\varepsilon_{r_1}(\xi_0)}\cap \mathcal{B}_N^{l}) \geq k_2e^{\varepsilon (D+\frac{1}{4}) n_1}$$ and since $ n_1\geq k_3,$ it holds by lemma \ref{main_lemma}, that for every $j\geq 1$ $$ \#(\mathcal{C}_{B^\varepsilon_{r_j}(\xi_0)}\cap \mathcal{B}_N^{l})+P_{\mathcal{C}_{B^\varepsilon_{r_j}(\xi_0)}\cap \mathcal{B}_N^{h}}(x,c_1-\rho)\geq  k_2e^{\varepsilon(D+\frac{1}{4})n_j}$$ and in particular $$\#(\mathcal{C}_{B^\varepsilon_{r_i}(\xi_0)}\cap \mathcal{B}_N^{l})+P_{\mathcal{C}_{B^\varepsilon_{r_i}(\xi_0)}\cap \mathcal{B}_N^{h}}(x,c_1-\rho)\geq  k_2e^{\varepsilon(D+\frac{1}{4})n_i}= k_2e^{\varepsilon(D+\frac{1}{4})N} .$$ On the other hand, as in (\ref{eq5}), (\ref{gob}) gives $$\#(\mathcal{C}_{B^\varepsilon_{r_i}(\xi_0)}\cap \mathcal{B}_N^{l})+P_{\mathcal{C}_{B^\varepsilon_{r_i}(\xi_0)}\cap \mathcal{B}_N^{h}}(x,c_1-\rho) \leq  \#\mathcal{B}_N\leq C_D e^{\varepsilon D N},$$ which is a contradiction, since $k_2\geq C_D$. 
\end{proof}

\subsection{Proof of the Existence theorem}\label{dp_section}
In this section we prove the main theorem. Its proof is basically contained in the following two lemmas. The first is a rather direct consequence of corollary \ref{cor} and lemma \ref{connected_components}. It implies that when $B_r^\varepsilon(\xi_0)\subset D_1$, the values of minimisers $x^N$ on $\mathcal{C}_{B^\varepsilon_{r_1}(\xi_0)}^{(\bar n \inn)}$ are uniformly close to $c_1$. 

\begin{lemma}\label{values_at_infinity} Let $B_r^{\varepsilon}(\xi_0)\subset D_1$ and $x^N$ for $N\in \N$ be minimisers solving the corresponding minimisation problems defined in proposition \ref{local_minimisers}. Then it holds for $\bar n$ defined in corollary \ref{cor} that $$\mathcal{C}_{B^\varepsilon_{r_1}(\xi_0)}^{(\bar n \inn)}\cap \mathcal{B}_N^{l}=\varnothing.$$
\end{lemma}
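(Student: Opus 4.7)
I argue by contradiction, assuming there exists $g\in\mathcal{C}_{B^\varepsilon_{r_1}(\xi_0)}^{(\bar n\inn)}\cap\mathcal{B}_N^{l}$. The plan is to combine the pointwise counting bound of corollary \ref{cor} with the contracted-reflexion variation from the proof of lemma \ref{connected_components}, applied to the connected component of $g$ in $\mathcal{B}_N^l$. The preliminary observation I need is that since $r_1=6r/\pi^2<r$, the closed ball $B^\varepsilon_{r_1}(\xi_0)$ lies inside the open visual-ball of radius $r$ around $\xi_0$, which is an open subset of $D_1$ and therefore of $\mathring D_1$. Hence $\mathcal{B}_{\bar n}(g)\subset\mathcal{C}_{B^\varepsilon_{r_1}(\xi_0)}\subset\mathcal{C}_{\mathring D_1}$, and by the definition of $\tilde x$ in proposition \ref{local_minimisers} the boundary condition for $x^N$ forces $x^N=c_1$ on any vertex of $\mathcal{B}_{\bar n}(g)\setminus\mathcal{B}_N$.

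Let $\mathcal{D}$ denote the connected component of $g$ in $\mathcal{B}_N^l$. The key geometric step is to show $\mathcal{D}\subset\mathcal{B}_{\bar n-1}(g)$: indeed, if some $h\in\mathcal{D}$ had $d(g,h)\geq\bar n$, any shortest path in $\mathcal{D}$ from $g$ to $h$ would provide $\bar n+1$ distinct vertices in $\mathcal{B}_{\bar n}(g)\cap\mathcal{B}_N^l\subset\mathcal{C}_{B^\varepsilon_{r_1}(\xi_0)}\cap\mathcal{B}_N^l$, contradicting the bound $\#(\mathcal{C}_{B^\varepsilon_{r_1}(\xi_0)}\cap\mathcal{B}_N^l)\leq\bar n$ from corollary \ref{cor}. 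It follows that $\p^{\out}\mathcal{D}\subset\mathcal{B}_{\bar n}(g)\subset\mathcal{C}_{\mathring D_1}$, and one checks that $x^N\geq c_1-2b\rho$ on $\p^{\out}\mathcal{D}$: vertices of $\p^{\out}\mathcal{D}\cap\mathcal{B}_N$ cannot lie in $\mathcal{B}_N^l$ by maximality of $\mathcal{D}$, hence satisfy $x^N\geq c_1-2b\rho$, while vertices of $\p^{\out}\mathcal{D}\setminus\mathcal{B}_N$ are fed the boundary value $x^N=\tilde x=c_1$ by the preliminary observation.

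With these admissible boundary values secured, I apply verbatim the variation from the proof of lemma \ref{connected_components}: set $\tilde x^N:=x^N$ off $\mathcal{D}$ and on $\mathcal{D}$ let $\tilde x^N_g$ be the contracted reflexion of $x^N_g$ about $c_1-2b\rho$ with factor $2b\rho/(c_1-c_0+\tilde\rho+2b\rho)<1$. Part (d) of proposition \ref{V} then gives $V(\tilde x^N_g)\leq V(x^N_g)$ everywhere, and the strict contraction combined with the lower bound $c_1-2b\rho$ on $\p^{\out}\mathcal{D}$ yields $|\nabla_g\tilde x^N|^2\leq|\nabla_g x^N|^2$ for every $g$, with strict inequality somewhere since $\mathcal{D}$ is non-empty and the reflexion is non-trivial on it. Hence $W_{\mathcal{B}_N^{\out}}(\tilde x^N)<W_{\mathcal{B}_N^{\out}}(x^N)$, contradicting the minimality of $x^N$ on $\mathcal{B}_N$. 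The main obstacle is the containment $\mathcal{D}\subset\mathcal{B}_{\bar n-1}(g)$: the iteration count $\bar n$ is exactly what is needed so that the full outer boundary of $\mathcal{D}$ falls in $\mathcal{C}_{\mathring D_1}$, so that the reflexion argument is not spoiled along edges crossing $\p^{\out}\mathcal{B}_N$ by a boundary value $c_0$.
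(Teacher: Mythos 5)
Your proof is correct, but it arranges the two key ingredients in the opposite logical order from the paper. The paper invokes lemma \ref{connected_components} as a black box: the connected component of $g$ in $\mathcal{B}_N^l$ must reach $\p^{\out}\mathcal{B}_N$ at a vertex carrying the boundary value $c_0$, hence outside $\mathcal{C}_{D_1}$; the connecting path therefore crosses $\p^{\out}\mathcal{C}_{B^\varepsilon_{r_1}(\xi_0)}$ and contributes at least $\bar n$ vertices to $\mathcal{C}_{B^\varepsilon_{r_1}(\xi_0)}\cap\mathcal{B}_N^l$, contradicting corollary \ref{cor}. You instead use corollary \ref{cor} first, to trap the component $\mathcal{D}$ inside $\mathcal{B}_{\bar n-1}(g)\subset\mathcal{C}_{B^\varepsilon_{r_1}(\xi_0)}\subset\mathcal{C}_{\mathring D_1}$, so that $x^N\geq c_1-2b\rho$ on all of $\p^{\out}\mathcal{D}$ (by maximality of the component inside $\mathcal{B}_N$ and by the boundary datum $\tilde x=c_1$ outside it), and then derive the contradiction from minimality by re-running the contracted-reflexion variation from the proof of lemma \ref{connected_components}. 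Both routes are sound; yours is somewhat more self-contained, since it only uses the \emph{proof} of lemma \ref{connected_components} under the hypothesis that is actually available ($x^N\geq c_1-2b\rho$ on $\p^{\out}\mathcal{D}$) rather than its statement, and your count gives the clean strict inequality $\bar n+1>\bar n$ (take the initial segment of the path, so that all $\bar n+1$ vertices really lie in $\mathcal{B}_{\bar n}(g)$ — worth saying explicitly, since a path inside $\mathcal{D}$ need not stay in that ball). The paper's version has the advantage of reusing lemma \ref{connected_components} verbatim and of not repeating the variational computation.
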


\begin{proof}
Assume that the lemma is not true, i.e. there exists a $g\in \mathcal{C}_{B^\varepsilon_{r_1}(\xi_0)}^{(\bar n \inn)}\cap \mathcal{B}_N^{l}$. By lemma \ref{connected_components}, there exists a point $\tilde g\in \p^{\out}(\mathcal{B}_N^{l}\cap \p^{\out}\mathcal{B}_{N})$ and a path $p_{g,\tilde g}\subset \Gamma$ from $g$ to $\tilde g$, with $(p_{g,\tilde g}\cap G)\subset \mathcal{B}_N^{l}$. Since $\p^{\out}(\mathcal{B}_N^{l}\cap \p^{\out}\mathcal{B}_{N})=(\mathcal{C}_{D_1})^c\cap \p^{\out}\mathcal{B}_{N}$ and because $B_r^\varepsilon(\xi_0)\subset \mathring D_1$, the path $p_{g,\tilde g}$ intersects $\p^{\out} \mathcal{C}_{B_{r_1}^\varepsilon(\xi_0)}$ and it follows that $$\# (\mathcal{B}_N^{l}\cap \mathcal{C}_{B^\varepsilon_{r_1}(\xi_0)})\geq \# (p_{g,\tilde g}\cap \mathcal{C}_{B^\varepsilon_{r_1}(\xi_0)})\geq  \bar n.$$ This contradicts the statement of corollary \ref{cor}.
\end{proof}

The following lemma states that for every $r>0$ there exists an $\tilde n\in \N$, such that the set $\mathcal{C}_{B^\varepsilon_{r}(\xi_0)}^{(\bar n \inn)}$ contains the truncated cone $\mathcal{C}_{B^\varepsilon_{r/2}(\xi_0)}\backslash \mathcal{B}_{\tilde n}$.

\begin{lemma}\label{neighborhood}
Let $B_r^{\varepsilon}(\xi_0)$ be a ball at infinity, $r_1$ as in (\ref{ri}) and define $r_0:=r_1/2$.
Then it holds for $\bar n$ defined in corollary \ref{cor} that
$$\mathcal{C}_{B^\varepsilon_{r_0}(\xi_0)}\backslash \mathcal{B}_{2\bar n}\subset \mathcal{C}_{B^\varepsilon_{r_1}(\xi_0)}^{(\bar n \inn)}.$$ 
\end{lemma}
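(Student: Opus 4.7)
\noindent\textit{Proof plan.}
The inclusion reduces to the following claim: for any $g \in \mathcal{C}_{B^\varepsilon_{r_0}(\xi_0)}$ with $|g| > 2\bar n$ and any $\tilde g \in G$ with $d(g, \tilde g) \leq \bar n$, one has $S(\tilde g) \subset B^\varepsilon_{r_1}(\xi_0)$, i.e.\ $\tilde g \in \mathcal{C}_{B^\varepsilon_{r_1}(\xi_0)}$. Note that $|\tilde g| \geq |g| - \bar n \geq \bar n$ automatically. The plan is to fix an auxiliary $\mu \in S(\tilde g)$ (which exists by visuality of $\Gamma$), bound $d_\varepsilon(\mu, \xi_0)$ from above, and then invoke proposition \ref{shadow}: since $S(\tilde g) \subset B^\varepsilon_{C_2 e^{-\varepsilon|\tilde g|}}(\mu)$, it suffices to show
$$d_\varepsilon(\mu, \xi_0) + C_2\, e^{-\varepsilon|\tilde g|} \leq r_1.$$

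To estimate $d_\varepsilon(\mu, \xi_0)$, pick any $\zeta \in S(g)$, so that $d_\varepsilon(\zeta, \xi_0) \leq r_0$ by hypothesis on $g$. Writing $(a\,|\,b)_v := \tfrac{1}{2}(|a| + |b| - d(a,b))$ for the Gromov product (extended to $\p\Gamma$ by the usual $\liminf$ convention), iterating the $\delta$-hyperbolic chain inequality gives
$$(\mu\,|\,\zeta)_v \geq \min\bigl\{(\mu\,|\,\tilde g)_v,\ (\tilde g\,|\,g)_v,\ (g\,|\,\zeta)_v\bigr\} - 2\delta.$$
The three pieces can be controlled directly: $(\mu\,|\,\tilde g)_v \geq |\tilde g| - \delta$ and $(g\,|\,\zeta)_v \geq |g| - \delta$ are consequences of the rays witnessing $\mu \in S(\tilde g)$ and $\zeta \in S(g)$, while $(\tilde g\,|\,g)_v = \tfrac{1}{2}(|\tilde g| + |g| - d(\tilde g,g)) \geq |g| - \bar n$. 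Combining with $|g| > 2\bar n$ and $|\tilde g| \geq \bar n$ yields $(\mu\,|\,\zeta)_v \geq \bar n - 3\delta$. Since in any $\delta$-hyperbolic space $d(v, \gamma_{\mu, \zeta})$ agrees with $(\mu\,|\,\zeta)_v$ up to a universal multiple of $\delta$, the visual metric bound (\ref{vm_estimate}) gives
$$d_\varepsilon(\mu, \zeta) \leq \lambda\, e^{c\varepsilon\delta}\, e^{-\varepsilon \bar n}$$
for an absolute constant $c$ depending only on $\delta$, so that $d_\varepsilon(\mu, \xi_0) \leq r_0 + \lambda e^{c\varepsilon\delta}\, e^{-\varepsilon \bar n}$ by the triangle inequality.

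Substituting into the sufficient condition, the lemma will follow once
$$\bigl(\lambda\, e^{c\varepsilon\delta} + C_2\bigr)\, e^{-\varepsilon \bar n} \ \leq\ r_1 - r_0 \ =\ r_0 \ =\ \tfrac{3r}{\pi^2},$$
that is, $\bar n \geq \varepsilon^{-1}\log\bigl((\lambda e^{c\varepsilon\delta} + C_2)\pi^2/(3r)\bigr)$. The main (and essentially only) obstacle is to verify that this is implied by the definition of $\bar n$ in corollary \ref{cor}. Since $\bar n \geq k_2\, e^{\varepsilon(D+1/2)k_3}$ and the expression for $k_3$ in (\ref{n1}) already contains the summand $\tfrac{2}{\varepsilon}\log(\mathrm{const}/r)$, the defined $\bar n$ grows polynomially in $1/r$, whereas the requirement above is merely logarithmic in $1/r$. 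This polynomial-versus-logarithmic gap, together with the safety factors built into the universal constants in (\ref{n1}), ensures that the inequality holds for every admissible $r>0$, completing the proof.
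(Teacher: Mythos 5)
Your argument is correct in its geometric core but takes a genuinely different route from the paper's. The paper proves the equivalent statement that every $h\notin\mathcal{C}_{B^\varepsilon_{r_1}(\xi_0)}$ satisfies $d(g,h)\geq \bar n$: it places auxiliary vertices $\tilde g\in\tilde\xi_{v}$ and $\tilde h\in\xi_{v}$ on the sphere of radius $\bar n$ (with $\tilde\xi\in S(g)$ and $\xi\in S(h)$, $\xi\notin B^\varepsilon_{r_1}(\xi_0)$), uses proposition \ref{shadow} to show $S(\tilde g)\cap S(\tilde h)=\varnothing$, hence $d(\tilde g,\tilde h)\geq 2\delta$, and then invokes divergence of geodesic rays in a $\delta$-hyperbolic space to push the lower bound on $d(g,h)$ out to $m-\bar n$. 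You instead fix $\tilde g$ in the $\bar n$-ball around $g$ and bound the visual distance from $S(\tilde g)$ to $\xi_0$ directly via a Gromov-product chain, replacing the ray-divergence step by the four-point inequality. Both proofs rest on proposition \ref{shadow} together with (\ref{vm_estimate}); yours is arguably more self-contained, since the paper's ``it then easily follows by $\delta$-hyperbolicity'' hides a comparable amount of work, whereas you make the hyperbolicity input explicit. The individual Gromov-product estimates ($(\mu\,|\,\tilde g)_v\geq|\tilde g|-\delta$ from $\mu\in S(\tilde g)$, and $(\tilde g\,|\,g)_v\geq|g|-\bar n$) all check out.

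The one soft spot is the final numerical step. Your sufficient condition reads $\bar n\geq\varepsilon^{-1}\log\bigl((\lambda e^{c\varepsilon\delta}+C_2)\pi^2/(3r)\bigr)$, and its constant involves $\lambda$ and $\delta$, which do not occur in (\ref{n1}). The polynomial-versus-logarithmic comparison settles the matter for small $r$, and the condition is vacuous for large $r$; for intermediate $r$, however, the claim depends on how $\lambda e^{c\varepsilon\delta}+C_2$ compares with the constants inside $k_3$, which you do not verify. This is harmless --- $k_3$ may be enlarged by an additive constant independent of $r$ without affecting anything else in the paper --- and the paper's own proof performs the analogous step (deducing $C_2e^{-\varepsilon\bar n}\leq r_1/6$ from $\bar n\geq k_3$) with the same degree of looseness, so this should be regarded as a point to tighten rather than a genuine gap.
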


\begin{proof}
Let $\bar n$ be as above and $m$ be an integer such that $m\geq \bar n.$ It follows by the definition of cones that if $g\in \mathcal{C}_{B^\varepsilon_{r_0}(\xi_0)}\backslash \mathcal{B}_{m}$, then $S(g)\subset B^\varepsilon_{r_0}(\xi_0)$. Let $\tilde \xi \in S(g)$ and let $\tilde g\in \tilde{\xi}_{v}$, such that $|\tilde g|=\bar n$. Then, by proposition \ref{shadow}, $S(\tilde g)\subset B^\varepsilon_{r_0+C_2e^{-\varepsilon \bar n}}(\xi_0)$. Let now $h\in (\mathcal{C}_{B_{r_1}^\varepsilon(\xi_0)})^c\cup \mathcal{B}_{\bar n}$. We will show that $d(g,h)\geq m-\bar n$. In case that $h\in \mathcal{B}_{\bar n}$, by the triangle inequality $d(g,h)\geq m- \bar n$. Assume now that $h\in (\mathcal{C}_{B_{r_1}^\varepsilon(\xi_0)}\cup \mathcal{B}_{\bar n})^c$. Then $|h|\geq \bar n$ and there exists a $\xi \in S(h)$ such that $\xi\notin B_{r_1}^\varepsilon(\xi_0)$. Let $\tilde h\in \xi_{v}$ be such that $|\tilde h|=\bar n$. It holds that $d^\varepsilon(\xi,\tilde \xi)\geq r_0-C_2e^{-\varepsilon \bar n}$ and it follows since $\bar n\geq k_3$ and by (\ref{n1}) that $\bar n\geq \varepsilon^{-1}\log(\pi^2r^{-1})$, so $C_2e^{-\varepsilon \bar n}\leq r_1/6$ and $$ \frac{1}{2}(r_0-C_2e^{-\varepsilon \bar n})   \geq C_2e^{-\varepsilon \bar n}.$$ By proposition \ref{shadow} it then follows that $S(\tilde g)\cap S(\tilde h)=\varnothing$ and in particular that $(\mathcal{U}_\xi \cap \mathcal{U}_{\tilde \xi})\backslash \mathcal{B}_{\bar n}=\varnothing$, so $d(\tilde g,\tilde h)\geq 2\delta$. It then easily follows by $\delta$-hyperbolicity that $$d(g,h)\geq m-\bar n.$$
Setting $m=2\bar n$, it holds that $d(g,h)\geq \bar n$, which implies that $g\in \mathcal{C}_{B^\varepsilon_{r_1}(\xi_0)}^{(\bar n \inn)}$ and finishes the proof.

\end{proof}

Now we are ready to prove the existence theorem stated in the introduction. Let us rephrase it slightly, before proving it:

\begin{theorema}
Let $j\in \{0,1\}$ and $\xi_j \in \mathring{D}_j$ be a point at infinity, let $r>0$ be such that $B^\varepsilon_r(\xi_j)\subset D_j$, $r_0=\frac{3r}{\pi^2}$, and let $\varepsilon>0$. Then there exists a constant $n_0\in \N$, such that 
for all $N\geq n_0$ and 
for all $g\in \mathcal{C}_{B^\varepsilon_{r_0}(\xi_j)}\backslash \mathcal{B}_{n_0}$, $$|x_g^N-c_j|\leq \varepsilon.$$
%
In particular, the global minimiser $\bar x=\lim_{k\to \infty}x^{N_k}$ constructed in proposition \ref{local_minimisers} solves the minimal Cauchy problem at infinity given by $D_0$ and $D_1$ (see definition \ref{dp_def}). 
\end{theorema}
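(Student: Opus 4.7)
The plan is to reduce the theorem to a direct chaining of the two preceding lemmas (lemma \ref{values_at_infinity} and lemma \ref{neighborhood}), once the tolerance parameter $\rho$ that defines $\mathcal{B}_N^l$ and $\mathcal{B}_N^h$ has been calibrated to match the prescribed $\varepsilon$. With this calibration in place, the set-theoretic inclusions from those lemmas translate directly into the pointwise inequality $|x_g^N - c_j| \leq \varepsilon$ on a suitable truncated cone, and the passage to the limit $\bar{x}$ is obtained by pointwise convergence.

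Concretely, for the case $j = 1$, I would first apply proposition \ref{V} to select $\rho \in (0,\rho_0]$ small enough that $2b\rho \leq \varepsilon$. With $r$ fixed by the hypothesis $B_r^\varepsilon(\xi_1) \subset D_1$, the constants $k_2, k_3$ and hence $\bar{n}$ produced by the main lemma and corollary \ref{cor} become fixed integers, and I set $n_0 := 2\bar{n}$. For any $N \geq n_0$ and any $g \in \mathcal{C}_{B^\varepsilon_{r_0}(\xi_1)} \backslash \mathcal{B}_{n_0}$, lemma \ref{neighborhood} places $g$ in $\mathcal{C}_{B^\varepsilon_{r_1}(\xi_1)}^{(\bar{n}\,\inn)}$, and then lemma \ref{values_at_infinity} forces $g \notin \mathcal{B}_N^l$, i.e. $x_g^N \geq c_1 - 2b\rho \geq c_1 - \varepsilon$. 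Combined with the upper bound $x_g^N \leq c_1$ coming from corollary \ref{trapping}, this yields $|x_g^N - c_1| \leq \varepsilon$.

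The case $j = 0$ is handled by the symmetry remark made immediately after proposition \ref{local_minimisers}: the variational framework, the definitions of shadows, cones, and the key energy estimate of lemma \ref{ts_estimate} are all symmetric under interchanging $c_0$ and $c_1$, so the mirror argument, carried out with $\mathcal{B}_N^l$ replaced by its counterpart $\{g : x_g^N \in (c_0 + \rho, c_0 + 2b\rho]\}^c \cap \{x_g^N > c_0 + 2b\rho\}$ at the lower well, delivers the analogous bound $|x_g^N - c_0| \leq \varepsilon$ on $\mathcal{C}_{B^\varepsilon_{r_0}(\xi_0)} \backslash \mathcal{B}_{n_0}$. Finally, for the global minimiser $\bar{x} = \lim_k x^{N_k}$ from proposition \ref{local_minimisers}, the uniform bound just obtained survives pointwise convergence, so $|\bar{x}_g - c_j| \leq \varepsilon$ on the same truncated cone. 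By lemma \ref{topology} and definition \ref{cone}, the set $\mathcal{C}_{B^\varepsilon_{r_0}(\xi_j)} \backslash \mathcal{B}_{n_0}$ is the $G$-part of a basic open neighbourhood of $\xi_j$ in $(G \cup \partial \Gamma, d_\varepsilon)$; letting $\xi_j$ range over $\mathring{D}_j$ and $\varepsilon$ over $\mathbb{R}_{>0}$ then verifies definition \ref{dp_def}.

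I do not expect a serious obstacle: essentially all of the work has already been performed in lemmas \ref{ts_estimate}, \ref{main_lemma}, \ref{values_at_infinity} and \ref{neighborhood}. The only point requiring care is the parameter bookkeeping. As noted after lemma \ref{ts_estimate}, the constant $k_0$ (and therefore $k_2$ and ultimately $\bar{n}$) blows up as $\rho \to 0$; however, for each \emph{fixed} $\varepsilon > 0$ one picks a single $\rho \in (0,\rho_0]$ with $2b\rho \leq \varepsilon$ and then the resulting $n_0 = 2\bar{n}$ is finite, so the argument goes through cleanly. The symmetric treatment of $c_0$ requires no more than the observation that all the structural hypotheses on $V$ in proposition \ref{V} are symmetric in the two wells.
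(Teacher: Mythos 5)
Your proposal is correct and follows essentially the same route as the paper: calibrate $\rho\leq\varepsilon/2b$ via proposition \ref{V}, set $n_0=2\bar n$, and chain lemma \ref{neighborhood} with lemma \ref{values_at_infinity} to conclude $(\mathcal{C}_{B^\varepsilon_{r_0}(\xi_j)}\backslash\mathcal{B}_{n_0})\cap\mathcal{B}_N^l=\varnothing$, with the $j=0$ case by symmetry and the limit statement by pointwise convergence. Your added explicit appeals to corollary \ref{trapping} for the upper bound and to lemma \ref{topology} for verifying definition \ref{dp_def} only make explicit what the paper leaves implicit.
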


\begin{proof}
As in the discussion above, let us first consider the case where $j=1$. Given an $\varepsilon>0$, let $\rho:=\min\{\rho_0,\varepsilon/2b\}$, $r_1=2r_0$ as in (\ref{ri}), and $n_0=2 \bar n$ as in corollary \ref{cor}. Then it holds by lemma \ref{neighborhood} that $\mathcal{C}_{B^\varepsilon_{r_0}(\xi_j)}\backslash \mathcal{B}_{n_0}\subset \mathcal{C}_{B^\varepsilon_{r_1}(\xi_0)}^{(\bar n \inn)}$ and by lemma \ref{values_at_infinity} that for all $N\geq n_0$, $\mathcal{C}_{B^\varepsilon_{r_1}(\xi_0)}^{(\bar n \inn)}\cap \mathcal{B}_N^l=\varnothing$, so $$(\mathcal{C}_{B^\varepsilon_{r_0}(\xi_j)}\backslash \mathcal{B}_{n_0})\cap \mathcal{B}_N^l=\varnothing.$$ This implies the statement of the theorem for $j=1$ by the definition of $ \mathcal{B}_N^l$ from lemma \ref{ts_estimate}. For the case $j=0$, an analogous proof can be followed, starting from proposition \ref{V} with $c_0$ and $c_1$ in the definitions appropriately exchanged.
\end{proof}

\appendix

\section{The Assouad dimension and growth of the graph}
In this section, we prove proposition \ref{goc}, i.e. we show that the growth of balls is bounded by an exponential related to the Assouad dimension of $\p \Gamma$. We assume throughout this appendix that $\varepsilon$ corresponding to a visual metric on $\Gamma$ is fixed, which makes $\p\Gamma$ into a metric space. 

Denote for any $B\subset \p \Gamma$ by $N_r(B)$ the smallest number of open sets of diameter $r$ required to cover $B$. The Assouad dimension of $\p \Gamma$ is then defined by 
\begin{equation}\label{AsD}d_A(\p\Gamma):=\inf\Big\{D \geq 0 \ | \ \exists C,\rho>0 : \forall 0<r<R\leq \rho, \sup_{\xi \in \p \Gamma} N_r(B_R^\varepsilon(\xi))\leq C\Big(\frac{R}{r}\Big)^D\Big\}
\end{equation} We refer the reader to \cite[Chapter 10]{heinonen} for an introduction to the Assouad dimension and its properties. Since $\Gamma$ is locally uniformly bounded it follows from \cite[Theorem 9.2]{BonkSchramm} that the Assouad dimension of $\p \Gamma$ is finite. As explained in \cite[Chapters 10]{heinonen}, this is equivalent to the fact that $(\Gamma,d_\varepsilon)$ is doubling as a metric space, i.e. there exists a number $M_d$, such that for every $r>0$ and $\xi_0\in \p\Gamma$, there exist $\{\xi_1,...,\xi_k\}$ for $k\leq M_d$, such that 
\begin{equation}\label{doubling_met}B_{2r}^\varepsilon(\xi_0)\subset \bigcup_{i=1}^k B_{r}^\varepsilon(\xi_i).\end{equation} 

Now we are ready to prove proposition \ref{goc}:

\begin{propositiona}[Growth of balls]
For every $D>d_A(\p\Gamma)$ there exists a constant $C_D$, such that for every $n\in \N$, $$\# \mathcal{B}_n\leq C_De^{\varepsilon D n} .$$
\end{propositiona}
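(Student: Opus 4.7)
The plan is to decompose $\mathcal{B}_n$ into concentric spheres $A_k := \{g \in G \ | \ |g| = k\}$ for $k = 0, 1, \ldots, n$, bound each $\#A_k$ by a constant times $e^{\varepsilon D k}$, and sum the resulting geometric series. The intuition is that a vertex at distance $k$ from $v$ corresponds, via a geodesic ray from $v$, to a ``direction at infinity'' resolved at scale $e^{-\varepsilon k}$ in the visual metric, and the Assouad dimension of $\p \Gamma$ controls the number of such directions.

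First I would use the visuality hypothesis (\ref{visual}) to associate to each $g \in A_k$ a point $\xi_g \in \p \Gamma$ together with a geodesic ray $\gamma_{v,\xi_g}$ satisfying $d(g,\gamma_{v,\xi_g}) \leq \delta$. The crucial step is then the following multiplicity bound: setting $r_k := (2\lambda)^{-1} e^{-\varepsilon(k+\delta+1)}$, there is a constant $M$ depending only on $S$ and $\delta$ such that for every $\eta \in \p \Gamma$,
\[
\#\bigl\{\, g \in A_k \ | \ \xi_g \in B^\varepsilon_{r_k}(\eta) \,\bigr\} \leq M.
\]
Indeed, if $g, g' \in A_k$ both map into this ball then $d_\varepsilon(\xi_g, \xi_{g'}) \leq 2 r_k$, so the lower inequality in (\ref{vm_estimate}) forces any bi-infinite geodesic $\gamma_{\xi_g, \xi_{g'}}$ to satisfy $d(v, \gamma_{\xi_g, \xi_{g'}}) \geq k + \delta + 1$. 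The $\delta$-slimness of the ideal triangle with vertices $v, \xi_g, \xi_{g'}$ then keeps the two rays $\gamma_{v,\xi_g}$ and $\gamma_{v,\xi_{g'}}$ within $\delta$ of each other up to distance $k + \delta$ from $v$; since $g$ and $g'$ themselves lie within $\delta$ of their respective rays, a short triangle-inequality argument yields $d(g, g') \leq 3\delta$. Uniform local finiteness then bounds the number of such $g'$ in $A_k$ by a constant $M = M(S, \delta)$.

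Next, because $(\p \Gamma, d_\varepsilon)$ is bounded and compact, it is covered by finitely many balls of radius $\rho$ as in (\ref{AsD}), and applying (\ref{AsD}) inside each of them yields, for $D > d_A(\p \Gamma)$, a cover of $\p \Gamma$ by at most $C' r_k^{-D}$ balls of radius $r_k$ for a constant $C'$ independent of $k$. Combining with the multiplicity bound,
\[
\#A_k \leq M \cdot C' \cdot r_k^{-D} = \tilde C \, e^{\varepsilon D k}
\]
for some constant $\tilde C$. Summing the geometric series,
\[
\#\mathcal{B}_n = \sum_{k=0}^n \#A_k \leq \tilde C \sum_{k=0}^n e^{\varepsilon D k} \leq \frac{\tilde C}{1 - e^{-\varepsilon D}}\, e^{\varepsilon D n},
\]
and we take $C_D$ to be the last prefactor.

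The hard part is the multiplicity bound: it requires chaining together (\ref{vm_estimate}), the thinness of ideal triangles in $\overline{\Gamma}$, and the visuality condition to convert visual-metric closeness of $\xi_g$ and $\xi_{g'}$ into graph-metric closeness of $g$ and $g'$. The specific factor $(2\lambda)^{-1} e^{-\varepsilon(\delta+1)}$ in the definition of $r_k$ is calibrated precisely to absorb the losses in these three estimates; once this step is in place, the covering estimate from the Assouad dimension and the geometric-series summation are routine.
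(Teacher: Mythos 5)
Your proof is correct and follows essentially the same route as the paper: cover $\p \Gamma$ at scale $e^{-\varepsilon k}$ using the Assouad dimension, show that each covering ball absorbs only a bounded number of vertices of the sphere of radius $k$ (the paper packages this multiplicity bound via shadows and proposition \ref{shadow}, you prove it directly with thin ideal triangles and (\ref{vm_estimate}); these are the same estimate), and sum the geometric series over spheres. The only caveat is a harmless calibration slip in your multiplicity bound -- with $r_k$ defined via $e^{-\varepsilon(k+\delta+1)}$ the slimness argument only keeps the rays $\delta$-close up to distance about $k$ from $v$, not $k+\delta$, so one should shrink $r_k$ by another factor of $e^{-\varepsilon\delta}$; this only changes the constants $M$ and $\tilde C$.
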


\begin{proof}
It follows directly from the definition of the Assouad dimension that for every $D>d_A(\p\Gamma)$ and $r<\rho$, $N_r(B_\rho^\varepsilon(\xi))\leq C\rho^Dr^{-D}$. Since $\p \Gamma$ is bounded, by say $R_0$, we may replace $\rho$ in this inequality by $R_0$. Indeed, it holds for some $k\in \N$ that $2^k\rho\geq R_0$ and by the doubling condition $$N_r(B_{R_0}^\varepsilon(\xi))\leq M_d^k N_r(B_{\rho}^\varepsilon(\xi))\leq M_d^kC\rho^Dr^{-D}.$$

Let $s_n:=C_1 e^{-\varepsilon n}$, so that $N_{s_n}(B_{R_0}^\varepsilon(\xi))\leq M_d^kC2^D\rho^D e^{\varepsilon D n}$. Since there exists a covering of $\p \Gamma$ by $M_d^kC2^D\rho^D e^{\varepsilon D n}$ sets of diameter $s_n$, there also exists a covering of $\p \Gamma$ by the same number of balls $B^{\varepsilon}_{s_n}(\xi_i)$. By proposition \ref{shadow} it holds that for every $g\in \mathcal{S}_n\cap \xi_v$, where $\mathcal{S}_n$ is the sphere of radius $n$ in $G$ and $\xi\in \p \Gamma$, the shadow $S(g)$ has diameter bounded from below by $2C_1e^{\varepsilon n}$. Hence, there exists also a covering by shadows $S(g_i)$ with $g_i\in \mathcal{S}_n\cap (\xi_i)_v$ of size at most $M_d^kC\rho^D e^{\varepsilon D n}$. This means that every ray class $\xi_v$ passing through $\mathcal{S}_n$ is within a $\delta$ distance to one of $g_i$'s and furthermore, since $\Gamma$ is visual, that every $g$ is $\delta$-close to a ray. It follows by local uniform boundedness condition on $\Gamma$, that there exists a uniform constant $\tilde k$, such that for every $g\in G$, $\#\mathcal{B}_{2\delta}(g_i)\leq \tilde k$ holds and it follows that $\#\mathcal{S}_n\leq \tilde k M^kC\rho^D2^D e^{\varepsilon D n}$. The statement of the proposition now easily follows for an appropriately defined $C_D$ by $$ \#\mathcal{B}_n\leq \sum_{l=1}^n\# \mathcal{S}_l\leq  \tilde k M^kC\rho^D e^{\varepsilon D n}\sum_{k=1}^n e^{\varepsilon D k}\leq C_De^{\varepsilon D n}.$$
\end{proof}

\begin{small}
\bibliographystyle{amsplain}
\bibliography{AC-graphs}

\def\cprime{$'$}
\providecommand{\bysame}{\leavevmode\hbox to3em{\hrulefill}\thinspace}
\providecommand{\MR}{\relax\ifhmode\unskip\space\fi MR }
\providecommand{\MRhref}[2]{%
  \href{http://www.ams.org/mathscinet-getitem?mr=#1}{#2}
}
\providecommand{\href}[2]{#2}
\begin{thebibliography}{10}

\bibitem{AllenCahn}
S.~Allen and J.~W. Cahn, \emph{A microscopic theory for antiphase boundary
  motion and its application to antiphase domain coarsening}, Acta. Metall.
  \textbf{27} (1979), 1084--1095.

\bibitem{notes1}
J.M. Alonso, T.~Brady, D.~Cooper, V.~Ferlini, M.~Lustig, M.~Mihalik,
  M.~Shapiro, and H.~Short, \emph{Notes on word hyperbolic groups}, Group
  theory from a geometrical viewpoint ({T}rieste, 1990), World Sci. Publ.,
  River Edge, NJ, 1991, pp.~3--63.

\bibitem{ancona}
A.~Ancona, \emph{Th{\'e}orie du potentiel sur les graphes et les
  vari{\'e}t{\'e}s}, \'{E}cole d'{\'e}t{\'e} de {P}robabilit{\'e}s de
  {S}aint-{F}lour {XVIII}---1988, Lecture Notes in Math., vol. 1427, Springer,
  Berlin, 1990, pp.~1--112.

\bibitem{anderson83}
M.T. Anderson, \emph{The {D}irichlet problem at infinity for manifolds of
  negative curvature}, J. Differential Geom. \textbf{18} (1983), no.~4,
  701--721 (1984).

\bibitem{birindelli}
I.~Birindelli and R.~Mazzeo, \emph{Symmetry for solutions of two-phase
  semilinear elliptic equations on hyperbolic space}, Indiana Univ. Math. J.
  \textbf{58} (2009), no.~5, 2347--2368.

\bibitem{BonkSchramm}
M.~Bonk and O.~Schramm, \emph{Embeddings of {G}romov hyperbolic spaces}, Geom.
  Funct. Anal. \textbf{10} (2000), no.~2, 266--306.

\bibitem{calegari}
D.~Calegari, \emph{The ergodic theory of hyperbolic groups}, Geometry and
  topology down under, Contemp. Math., vol. 597, Amer. Math. Soc., Providence,
  RI, 2013, pp.~15--52.

\bibitem{candel-llave}
A.~Candel and R.~de~la Llave, \emph{On the {A}ubry-{M}ather theory in
  statistical mechanics}, Commun. Math. Phys. \textbf{192} (1998), no.~3,
  649--669.

\bibitem{choi}
H.I. Choi, \emph{Asymptotic {D}irichlet problems for harmonic functions on
  {R}iemannian manifolds}, Trans. Amer. Math. Soc. \textbf{281} (1984), no.~2,
  691--716.

\bibitem{coornaert93}
M.~Coornaert, \emph{Mesures de {P}atterson-{S}ullivan sur le bord d'un espace
  hyperbolique au sens de {G}romov}, Pacific J. Math. \textbf{159} (1993),
  no.~2, 241--270.

\bibitem{notes2}
M.~Coornaert, T.~Delzant, and A.~Papadopoulos, \emph{G{\'e}om{\'e}trie et
  th{\'e}orie des groupes}, Lecture Notes in Mathematics, vol. 1441,
  Springer-Verlag, Berlin, 1990.

\bibitem{coulhon}
T.~Coulhon, \emph{Random walks and geometry on infinite graphs}, Lecture notes
  on analysis in metric spaces ({T}rento, 1999), Appunti Corsi Tenuti Docenti
  Sc., Scuola Norm. Sup., Pisa, 2000, pp.~5--36.

\bibitem{deGiorgi}
E.~De~Giorgi, \emph{Convergence problems for functionals and operators},
  Proceedings of the International Meeting on Recent Methods in Nonlinear
  Analysis (Rome, 1978), 1979, pp.~131--188.

\bibitem{harpe}
P.~de~la Harpe, \emph{Topics in geometric group theory}, Chicago Lectures in
  Mathematics, University of Chicago Press, Chicago, IL, 2000.

\bibitem{delpino}
M.~del Pino, M.~Kowalczyk, and J.~Wei, \emph{Entire solutions of the allen-cahn
  equation and complete embedded minimal surfaces of finite total curvature in
  $\mathbb{R}^3$}, J. Differential Geom. \textbf{93} (2013), no.~1, 67--131.

\bibitem{delpino2}
Manuel Del~Pino, Michal Kowalczyk, and Juncheng Wei, \emph{On {D}e {G}iorgi's
  conjecture and beyond}, Proc. Natl. Acad. Sci. USA \textbf{109} (2012),
  no.~18, 6845--6850.

\bibitem{gromov1}
M.~Gromov, \emph{Hyperbolic groups}, Essays in group theory, Math. Sci. Res.
  Inst. Publ., vol.~8, Springer, New York, 1987, pp.~75--263.

\bibitem{gromov2}
\bysame, \emph{Asymptotic invariants of infinite groups}, Geometric group
  theory, {V}ol.\ 2 ({S}ussex, 1991), London Math. Soc. Lecture Note Ser., vol.
  182, Cambridge Univ. Press, Cambridge, 1993, pp.~1--295.

\bibitem{heinonen}
J.~Heinonen, \emph{Lectures on analysis on metric spaces}, Universitext,
  Springer-Verlag, New York, 2001.

\bibitem{holopainen_lang}
I.~Holopainen, U.~Lang, and A.~V{{\"a}}h{{\"a}}kangas, \emph{Dirichlet problem
  at infinity on {G}romov hyperbolic metric measure spaces}, Math. Ann.
  \textbf{339} (2007), no.~1, 101--134.

\bibitem{kapovich}
I.~Kapovich and N.~Benakli, \emph{Boundaries of hyperbolic groups},
  Combinatorial and geometric group theory ({N}ew {Y}ork, 2000/{H}oboken, {NJ},
  2001), Contemp. Math., vol. 296, Amer. Math. Soc., Providence, RI, 2002,
  pp.~39--93.

\bibitem{llave-lattices}
H.~Koch, R.~de~la Llave, and C.~Radin, \emph{Aubry-{M}ather theory for
  functions on lattices}, Discr. Cont. Dyn. Syst. \textbf{3} (1997), no.~1,
  135--151.

\bibitem{mazzeo}
R.~Mazzeo and M.~Saez, \emph{Multiple-layer solutions to the {A}llen-{C}ahn
  equation on hyperbolic space}, Proc. Amer. Math. Soc. \textbf{142} (2014),
  no.~8, 2859--2869.

\bibitem{modica}
L.~Modica, \emph{The gradient theory of phase transitions and the minimal
  interface criterion}, Arch. Rational Mech. Anal. \textbf{98} (1987), no.~2,
  123--142.

\bibitem{ACAH}
B.~Mramor, \emph{Minimisers of the allen-cahn equation and the asymptotic
  plateau problem on hyperbolic groups},  (preprint, 2015).

\bibitem{italians}
A.~Pisante and M.~Ponsiglione, \emph{Phase transitions and minimal
  hypersurfaces in hyperbolic space}, Comm. Partial Differential Equations
  \textbf{36} (2011), no.~5, 819--849.

\bibitem{Savin}
O.~Savin, \emph{Regularity of flat level sets in phase transitions}, Ann. of
  Math. (2) \textbf{169} (2009), no.~1, 41--78.

\bibitem{Savin2}
\bysame, \emph{Phase transitions, minimal surfaces and a conjecture of de
  giorgi}, Current Developments in Mathematics \textbf{2009} (2010), 59--113.

\bibitem{sullivan79}
D.~Sullivan, \emph{The density at infinity of a discrete group of hyperbolic
  motions}, Inst. Hautes {\'E}tudes Sci. Publ. Math. (1979), no.~50, 171--202.

\bibitem{sullivan83}
\bysame, \emph{The {D}irichlet problem at infinity for a negatively curved
  manifold}, J. Differential Geom. \textbf{18} (1983), no.~4, 723--732 (1984).

\end{thebibliography}
\end{small}

\end{document}